\definecolor{amber}{rgb}{1.0, 0.49, 0.0}
\DeclareMathAlphabet{\mathpzc}{OT1}{pzc}{m}{it}
\newtheorem{assumption}{Assumption}
\newcommand*{\xbf}{{\mathbf x}}
\newcommand*{\ubf}{{\mathbf u}}
\newcommand*{\ybf}{{\mathbf y}}
\newcommand*{\zbf}{{\mathbf z}}
\newcommand*{\gbf}{{\mathbf g}}
\newcommand*{\vbf}{{\mathbf v}}
\newcommand*{\wbf}{{\mathbf w}}
\newcommand*{\R}{{\mathbb R}}
\newcommand*{\0}{{\mathbf 0}}
\newcommand*{\1}{{\mathbf 1}}
\newcommand*{\E}{{\mathbb E}}
\begin{document}

\title{A Dimension-Insensitive Algorithm for Stochastic Zeroth-Order Optimization
\thanks{This work is partially supported by  UF AI Catalyst  Grants and NSF grants CMMI-2016571. 
}
}

\author{Hongcheng Liu \and Yu Yang}
\institute{ 
           H. Liu,   Y. Yang (Corresponding author)\at
            Department of Industrial and Systems Engineering, University of Florida \\
            \email{liu.h@ufl.edu,  yu.yang@ise.ufl.edu}
}

\date{Received: date / Accepted: date}

\maketitle
\begin{abstract}
This paper concerns a convex, stochastic zeroth-order optimization (S-ZOO) problem.  The objective is to minimize the expectation of a cost function whose gradient is not directly accessible. For this problem, traditional optimization algorithms mostly yield query complexities that grow polynomially with dimensionality (the number of decision variables). Consequently, these methods may not perform well in solving massive-dimensional problems arising in many modern applications. Although more recent methods can be provably dimension-insensitive, almost all of them require arguably more stringent conditions such as everywhere sparse or compressible gradient.  In this paper,  we propose a sparsity-inducing stochastic gradient-free (SI-SGF) algorithm, which provably yields a dimension-free (up to a logarithmic term) query complexity in both convex and strongly convex cases. Such  insensitivity to the  dimensionality  growth  is proven, for the first time, to be achievable when neither gradient sparsity nor gradient compressibility is satisfied. Our numerical results demonstrate a consistency between our theoretical prediction and the empirical performance.

\keywords{stochastic optimization  \and zeroth-order method \and high dimensionality  \and  sparsity }

 \subclass{ 90C15 \and 90C25 \and 90C26}
\end{abstract}
\sloppy
\section{Introduction}\label{section: intro}

For many modern   optimization problems, the (stochastic) gradient can be  hardly available. This happens, for instance, when the objective function   admits no known explicit form, or the (stochastic) gradient is too expensive to compute.
 Applications of this type render many efficient  and thus popular algorithms, such as the stochastic first-order methods, no longer directly applicable. As a remedy, zeroth-order optimization (ZOO), also known as black-box or derivative-free optimization \cite{cai2020zeroth},  has attracted much research interest.

In this paper, we propose a novel zeroth-order method to solve 
a stochastic ZOO (S-ZOO)  problem with the following formulation:
\begin{align}
\min_{\xbf\in\R^d}\,\{F(\xbf):=\E\left[f(\xbf,\xi)\right]\},\label{SP problem}
\end{align}
where  $\xi$ is a random vector of problem parameters whose probability distribution $\mathbb P$ is supported on a measurable set $\Theta\subseteq\R^q$, and $f:\,\R^d\times\Theta\rightarrow\R$ is deterministic and measurable. Denote by $\xbf^*\in\R^d$ an optimal solution to \eqref{SP problem}.  Here, the dimensionality of the problem $d$ is assumed, without loss of generality, to satisfy $d\geq 3$ throughout this paper. In addition,  it is assumed that $f(\,\cdot\,,\xi)$ is everywhere continuously differentiable for almost every $\xi\in\Theta$, $F$ is convex, and  the expectation   $\E\left[f(\xbf,\xi)\right]=\int_{\Theta} f(\xbf,\xi)\,\text{d}\mathbb P(\xi)$ is well defined and finite-valued for every $\xbf\in\R^d$.   Given $\xi$,  let $\nabla f(\,\cdot\,,\xi)$ be the gradient of $f(\,\cdot\,,\xi)$.    $\Vert\cdot\Vert_1$ and $\Vert\cdot\Vert$ are the 1-norm and 2-norm, respectively. Furthermore, 
 we  impose  the following assumptions hereafter for some known constant $R\geq 1$. 
  \begin{assumption}\label{sample simulation}
 It is possible to generate independent and identically distributed (iid) realizations, $\xi_1,\,\xi_2,....,$ of the random vector $\xi$.
 \end{assumption}
 
  \begin{assumption}\label{assumption oracle}
 There is a stochastic zeroth-order oracle that returns the value of $f(\xbf,\xi)$ for a given input point $(\xbf,\xi)\in\R^d\times\Theta$.
\end{assumption}

\begin{assumption}\label{mean and variance}
For  every $\xbf\in\R^d$, 
 it holds that $\nabla F(\xbf)=\mathbb E[\nabla f(\xbf,\xi)]$ and
$
 \E\left[\left\Vert \nabla f(\xbf,\xi)-\nabla F(\xbf)\right\Vert^2\right]\leq \sigma^2
$ for some $\sigma>0$. 
\end{assumption}
\begin{assumption}\label{Lipschitz}
There exists a constant $L>0$, such that \[\Vert\nabla f(\xbf_1,\xi)-\nabla f(\xbf_2,\xi)\Vert\leq L \Vert\xbf_1-\xbf_2\Vert,\]  for all 
$\xbf_1,\,\xbf_2\in\R^d$ and almost every $\xi\in\Theta$.
\end{assumption}

\begin{assumption}\label{bounded}
Problem \eqref{SP problem} admits a bounded optimal solution such that $\{\xbf:\,\Vert \xbf\Vert_1\leq R\} \cap \arg\min_{\xbf\in\R^d}\,F(\xbf) \neq \emptyset$.
\end{assumption}

Assumptions \ref{sample simulation} through \ref{Lipschitz} above are common in the ZOO literature (See \cite{ghadimi2013stochastic,balasubramanian2018zeroth,nesterov2017random}).  Assumption \ref{sample simulation} allows 
for the availability of a simulator to generate sample scenarios of the random vector $\xi$.   Assumption \ref{assumption oracle} concerns the algorithmic oracle.
By this assumption, we may only have access to noisy
objective values $f(\xbf,\xi)$, i.e.,  inexact zeroth-order information of $F$, for a given tuple of function input $(\xbf,\xi)$. No higher-order information, such as gradient or hessian, is available.   Assumption \ref{mean and variance} stipulates that $\nabla f(\,\cdot\,,\xi)$ is an unbiased estimator of $\nabla F$ with a bounded variance. Assumption \ref{Lipschitz} requires $f(\,\cdot\,,\xi)$ to be differentiable and its gradient to be Lipschitz continuous. A well-known inequality as an immediate result of   this assumption  is that, for almost every $\xi\in\Theta$, and for all $\xbf,\,\ybf\in\R^d$:
\begin{align}
f(\xbf,\,\xi)\leq f(\ybf,\,\xi)+\langle\nabla f(\ybf,\xi),\,\xbf-\ybf\rangle+\frac{L}{2}\Vert\xbf-\ybf\Vert^2.\label{smooth well-known general}
\end{align}

Assumption \ref{bounded} imposes the  boundedness of an optimal solution. While this assumption  also  holds for many problems in practice, we are particularly interested in  scenarios where the problem dimensionality $d$ is very large compared to   $R$; that is, $R\ll d$. In this case, Assumption \ref{bounded} is also referred to as the weak sparsity condition by \cite{negahban2012unified,li2018minimax} in  statistics and inverse problems, which is an extension to the conventional sparsity.  Indeed, when the optimal solution $\xbf^*$ is a sparse element of a hypercube (that is, $\xbf^*\in[-r,\,r]^d:\,\Vert\xbf^*\Vert_0=s\ll d$ for some  non-negative integer $s$  and some  scalar\footnote{Here $\Vert\cdot\Vert_0$ denotes the number of nonzero entries of ``$\,\cdot\,$''.} $r>0$), we may see that weak sparsity easily holds when the traditional sparsity holds, as $R=s\cdot r\ll d$.   

However, our results to be presented subsequently may not be advantageous  when $R$ is large or even comparable with $d$. Admittedly, one may argue that, regardless of how large $R$ is,  we can always introduce a change of variables $\zbf:=\xbf/C_R$, for some quantity $C_R$ dependent only on $R$, such that $\zbf$ becomes the actual vector of decision variables and  $\Vert \zbf\Vert_1$ is small (thus, weak sparsity still holds for $\zbf$). However, readers are reminded that such rescaling may undesirably affect both the  variance  $\sigma^2$ in Assumption \ref{mean and variance} and the Lipschitz constant $L$ in Assumption \ref{Lipschitz} --- both $\sigma^2$ and $L$ will grow polynomially in $C_R$ after the rescaling.

Some of our results will be  additionally contingent upon the assumptions of strong convexity and (the traditional) sparsity  as below:

 \begin{assumption}\label{strongly convex assumption}
Function $F(\cdot)$ is strongly convex with modulus $\mu>0$.
 \end{assumption}

 

 \begin{assumption}\label{sparsity assumption}

Problem \eqref{SP problem} admits a finite, $s$-sparse optimal solution. More specifically,  there exists $\xbf^*\in \{\xbf:\,\Vert \xbf\Vert_1\leq R\}\cap\arg\min_{\xbf\in\R^d}\,F(\xbf)$  such that $\Vert \xbf^*\Vert_0\leq s$, for some   $s:\,1\leq s\ll d$.
 \end{assumption}

Strongly convex functions under Assumption \ref{strongly convex assumption} have been frequently  studied  in function minimization. Assumption \ref{sparsity assumption} is the conventional sparsity condition, which is a more stringent requirement than Assumption \ref{bounded}. This condition holds for many modern statistical and machine learning problems as discussed, e.g., by \cite{negahban2012unified,candes2007dantzig,fan2001,bickel2009simultaneous}. Sparsity and its benefit in decision-making  and optimization problems have been discussed by much, and growingly more, literature, e.g., in \cite{jordan1995principles,letchford2009exploiting,cho2001reduction}. Exploiting sparsity in stochastic optimization has also been studied by \cite{liu2019sample}. Problem \eqref{SP problem}, even  under  both Assumptions \ref{strongly convex assumption} and \ref{sparsity assumption} additionally,  has   a wide spectrum of  applications, such as simulation-based optimization \cite{rubinstein2016simulation}, parameter tweaking of deep learning models \cite{snoek2012practical}, and optimal therapeutic designs \cite{marsden2008computational}.

Effective algorithmic paradigms for solving \eqref{SP problem} are available in the rich ZOO literature, including pattern search \cite{torczon1997convergence,hooke1961direct,nelder1965simplex}, random search \cite{solis1981minimization}, and bayesian optimization \cite{mockus2012bayesian}, among many others (see \cite{larson2019derivative} for an excellent review).  Among the existing ZOO methods, the gradient estimation-based ZOO  framework discussed by  seminal works such as \cite{nesterov2017random,spall1998overview,agarwal2010optimal,duchi2015optimal,ghadimi2013stochastic} is closely related to this current work. 

Despite numerous results on ZOO, a persistent challenge, as pointed out by \cite{cai2020zeroth,balasubramanian2018zeroth},  is that the performance of almost all existing ZOO algorithms deteriorates rapidly as the problem dimensionality $d$ increases. In particular, for convex S-ZOO with  a potentially nonsmooth cost function (a more general setting than ours in terms of Assumption \ref{Lipschitz} above), a randomized gradient-free algorithm achieves a  complexity  of $O(d^2/\epsilon^2)$-many queries of the zeroth-order oracles\footnote{The query complexity of the zeroth-order oracle refers to the number of calls to the zeroth-order oracle required to achieve a desired accuracy  $\epsilon>0$.}  according to \cite{nesterov2017random}. If the smoothness condition as in Assumption \ref{Lipschitz} holds,  \cite{ghadimi2013stochastic}  provides a rate of $O(D_0d/\epsilon^2)$, where $D_0$ is the squared Euclidean distance between the initial solution and the optimal solution. Some  analysis on the performance lower bound \cite{jamieson2012query} indicates that the rate by \cite{ghadimi2013stochastic} is already optimal without additional regularity assumptions on the objective function $F$. These  complexity results suggest the potential inefficiency of existing ZOO algorithms for high-dimensional applications, where the number of decision variables can be in millions, billions, or even more. On the other hand, such high-dimensional problems are emerging rapidly in, e.g., data science, deep learning, and imaging, due to the ever-increasing demand for higher resolution and improved comprehensiveness in an optimized system. 

Although several  promising high-dimensional ZOO paradigms have been proposed recently, e.g., by \cite{wang2018stochastic,balasubramanian2018zeroth,cai2020zeroth,balasubramanian2018zeroth_neurips},  the corresponding ZOO  theories are based on
some arguably restrictive assumptions. Indeed, while query complexities that are (notably) logarithmic in $d$ have been achieved by \cite{wang2018stochastic,balasubramanian2018zeroth,balasubramanian2018zeroth_neurips}, their results are based on the assumption that $\nabla F$, the gradient of $F$, is everywhere $s$-sparse for some $s\ll d$. This means that there are always no more than $s$-many nonzero components in the gradient  vector $\nabla F(\xbf)$, for any choice of  $\xbf$. Some results by \cite{balasubramanian2018zeroth}  further require  that the optimal solution $\xbf^*$ is sparse.  The assumption of  sparse gradient, according to \cite{cai2020zeroth}, is comparatively stringent. In relaxing this assumption, \cite{cai2020zeroth}  has developed the zeroth-order regularized optimization (ZORO) method, which is effective when the gradient is dense and satisfies a compressibility condition   proposed therein. Additionally, \cite{cai2020zeroth}  imposes a   more specific problem structure than \eqref{SP problem}---the random noise in evaluating the zeroth-order information is additive. Namely, it is assumed that $f(\xbf)=F(\xbf)+u$ for some random variable $u\in\R$ with a bounded support\footnote{A more general problem with a known, nonsmooth regularization term has also been considered  by  \cite{cai2020zeroth}.}. 

In contrast to the aforementioned methods, this paper presents a novel, sparsity-inducing stochastic gradient-free (SI-SGF) algorithm, which can effectively reduce the query complexity in terms of the dependence on the dimension $d$, even when  most of the aforementioned assumptions, i.e.,  sparse gradient, compressible gradient, or additive randomness, are absent. Imposed instead in this work is the  more common and more easily verifiable assumption on the (weak) sparsity level  of the optimal solution $\xbf^*$. More specifically, our main result  in Theorem \ref{main theorem} only requires  a weak sparsity assumption as in Assumption \ref{bounded}, which holds even if $\xbf^*$ is dense. When  $R$ therein is dimension-independent, we prove that the SI-SGF can yield a dimension-free (up to a logarithmic term) query complexity.  A significant acceleration  is further achieved when $\xbf^*$ is sparse (as in Assumption \ref{sparsity assumption}) and  the objective function of \eqref{SP problem} is strongly convex (as in Assumption \ref{strongly convex assumption}).
%
Table \ref{table one} summarizes the complexity results and assumptions for  the proposed SI-SGF and several important alternatives.
 Although the complexity rates by \cite{cai2020zeroth,balasubramanian2018zeroth,balasubramanian2018zeroth_neurips} can be more appealing than ours in terms of the desired accuracy $\epsilon$, the proposed SI-SGF is perhaps the first algorithm that can be shown to achieve dimension-insensitive query complexities, when gradient is neither   sparse nor   compressible. 

\begin{table}
\caption{Comparison of query complexity results. The ``Assumption'' column presents conditions other than Assumptions \ref{sample simulation} through \ref{bounded}, which are standard to the convex ZOO literature. $D_0:=\Vert \xbf^1-\xbf^*\Vert^2$ measures the squared distance between the initial solution and an optimal solution.  Although $D_0\sim \mathcal O(d)$ in general, 
it  can be $\mathcal O(s)$ when $\xbf^*$ has only $s$-many nonzero components and the initial solution is chosen to be sparse (e.g., the initial solution can be the all-zero vector). ``$s$-sparse gradient'' refers to the assumption that the gradient has no more than $s$-many nonzero components everywhere, and ``$\xbf^*$ is $s$-sparse'' means that the optimal solution has no more than $s$-many nonzero components.}\label{table one}
\begin{center}
\small
\begin{tabular}{ ccc c } 
 \toprule

Algorithms    & Complexity & Assumption \\ \midrule\midrule
  \cite{nesterov2017random} & $\mathcal O\left(\frac{d^2}{\epsilon^2}\right)$ &
   $\begin{matrix}
   \text{No additional assumption}
   \\
   \text{Cost function can be nonsmooth}
   \end{matrix}
   $
    \\ \midrule
   \cite{ghadimi2013stochastic}  & $\mathcal O\left(\frac{d D_0}{\epsilon^2}\right)=\mathcal O\left(\frac{ d^2}{\epsilon^2}\right)$ & No additional assumption  \\  \midrule
      \cite{ghadimi2013stochastic}  & $\mathcal O\left(\frac{d D_0}{\epsilon^2}\right)=\mathcal O\left(\frac{ ds}{\epsilon^2}\right)$ & $\xbf^*$ is $s$-sparse  \\  \midrule
   
   \cite{wang2018stochastic} 
   & $\mathcal O\left(\frac{s(\ln d)^3}{\epsilon^3}\right)$ & $\begin{matrix}
   \text{$s$-sparse gradient}
   \\
   \text{Bounded 1-norm of gradient}
   \\
   \text{Bounded 1-norm of Hessian}
   \\
   \text{Additive randomness}
   \\
   \text{Function sparsity}
   \\
   \Vert \xbf^*\Vert_1\leq R
   \end{matrix}
   $
   \\ 
  \midrule
   \cite{cai2020zeroth} & $\mathcal O\left(s\cdot \ln d\cdot \ln\left(\frac{1}{\epsilon}\right)\right)$ &
   $\begin{matrix}
   \text{Compressible gradient}
   \\
   \text{Bounded 1-norm of Hessian}
   \\
   \text{Restricted strong convexity}
   \\
   \text{Additive randomness}
   \\
   \text{Coercivity}
   \end{matrix}
   $
    \\ \midrule
    \cite{balasubramanian2018zeroth,balasubramanian2018zeroth_neurips} & $\begin{matrix}\mathcal O\left(\left(\frac{D_{0}s^2}{\epsilon}+\frac{D_0 s}{\epsilon^2}\right)(\ln d)^2\right)
    \\
    \mathcal =\,\mathcal O\left(\left(\frac{s^3}{\epsilon}+\frac{s^2}{\epsilon^2}\right)(\ln d)^2\right)
    \end{matrix}$ 
    & $\begin{matrix}
   \text{$s$-sparse gradient}
   \\
   \text{$\xbf^*$ is $s$-sparse}
   \end{matrix}
   $
    \\ \midrule
   Proposed  & $\mathcal O\left(\frac{(D_0+R)^3\ln d}{\epsilon^{3}}\right)$ & $\Vert\xbf^*\Vert_1\leq R$  \\   \midrule
   Proposed & $\begin{matrix}
   \mathcal O\left(\frac{(s+D_0+R)^2\ln d}{\epsilon^{2} }\right)
   \\=\mathcal O\left(\frac{(s+R)^2
   \ln d}{\epsilon^{2} }\right)\end{matrix}$ & $\begin{matrix}
   \Vert \xbf^*\Vert_1\leq R 
   \\
   \text{$\xbf^*$ is 
   $s$-sparse}
   \\
   \text{Strong convexity}
   \end{matrix}
   $  \\ 
 \bottomrule
 
\end{tabular}
\end{center}
\end{table}

Note that our results do not contradict with the lower performance bounds by \cite{duchi2015optimal} (in Propositions 1 and 2 therein) for a convex ZOO.  While these lower bounds are tight   when the domain is an $\ell_2$-ball, the problem of interest under Assumption \ref{bounded} concerns a special case of their results; that is, when the domain is an $\ell_1$-ball. In our case, the lower performance bounds by \cite{duchi2015optimal} actually becomes ``0''.  Furthermore, our research is focused on making use of some special and important  problem structures in accelerating S-ZOO.  Exploiting special  problem structures to outperform the worst-case theoretical lower bounds is  fairly common   in the optimization literature (e.g., in \cite{nesterov2005smooth}). Although we  hypothesize that   our complexity results are optimal under our setting, we leave the investigation of this hypothesis  for future research.

\subsection{Outline}

The rest of the paper is organized as follows. In section \ref{gradient approximation}, we provide some preliminaries on gradient approximation via randomized smoothing. Section \ref{sec-algorithm} presents the proposed algorithm.  Section \ref{main results sec}  presents  our main complexity results  on the SI-SGF in both convex and strongly convex cases.  A preliminary numerical study is included in Section \ref{sec: numerical}. Finally, Section \ref{sec: conclusions} concludes the paper. Some proofs and auxiliary results  are provided in Appendix \ref{sec: proof}.

\subsection{Notations}
Let $\R$ and $\R_+$ be the collection of all real numbers and non-negative real numbers, respectively.  For any vector $\xbf:=(x_1, \cdots, x_d)^\top \in \R^d$, we sometimes  use $(x_i)$ to denote $(x_1, \cdots, x_d)^\top$ for convenience, and $\xbf^\top $ to denote its transpose. The cardinality of a set $S$ is denoted by $|S|$ and $\xbf_{S}=(x_i:\,i\in S)$ is the subvector of $\xbf$ that only consists of components  in the index set $S$.  $\mathbf 1$ and $\mathbf 0$ are all-one and all-zero vectors of proper dimensions, respectively. 
 $\nabla  F(\xbf)$ is the gradient of $F$ at $\xbf$ and $\nabla_S F(\xbf)$ is the subvector of $\nabla  F(\xbf)$ that only consists of entries from the index set $S$. The set of integers $\{1, 2, \cdots, K\}$ is denoted by $[K]$.  $\lceil \cdot \rceil$ represents  the smallest integer no smaller than ``$\,\cdot\,$''. $\mathcal N_d(\xbf,\Sigma)$ is the $d$-variate normal distribution with mean $\xbf\in\R^d$ and covariance matrix  $\Sigma\in\R^{d\times d}$. Lastly, $\mathcal N(0,1)$ is the standard normal distribution.

  \section{Gradient approximation via randomized smoothing}\label{gradient approximation}
In this section, we provide some  preliminaries  on how to   approximate the gradient of the objective function using only zeroth-order oracles through  a randomized smoothing scheme.  Many  results below   are based on the existing analyses by \cite{nesterov2017random,cai2020zeroth,ghadimi2013stochastic}.  

To approximate the gradient of $f(\,\cdot\,,\xi)$  with respect to $\xbf$, denoted by $\nabla f(\xbf,\xi)$, we propose to follow a similar approach as discussed by \cite{nesterov2017random,cai2020zeroth}, using the  finite-difference-like formula below.
 \begin{align}
   G^{\delta}(\xbf,\xi):= \frac{f(\xbf+\delta \ubf,\xi)-  f(\xbf,\xi)}{\delta}\ubf,\label{randomized smoothing}
 \end{align}
 where $\ubf = (u_i:\,i=1,...,d)$ has iid  entries with $u_i\in\{-1,1\}$, for all $i$, following a discrete uniform  distribution.  
 Hereafter, we denote by  $\E_{\ubf}$ the expectation over $\ubf$ and, in contrast, by $\E$ the expectation over $\xi$. 
By the definition of $\ubf$, we have
 \begin{equation}\label{delta function}
   f^{\delta}(\xbf,\xi):=\E_{\ubf}  \left[f(\xbf+\delta \ubf,\xi) \right]= \frac{1}{2^d}\sum_{\ubf\in\{-1,\,1\}^d} f(\xbf+\delta \ubf,\,\xi).
 \end{equation}
By the probability mass function of $\ubf$, we have that
 \begin{equation}\label{delta function gradient}
    \nabla f^{\delta}(\xbf,\xi) = \frac{1}{2^d}\sum_{\ubf\in\{-1,\,1\}^d}\nabla f(\xbf+\delta\ubf,\xi)=\E_{\ubf}\left[\nabla f(\xbf+\delta\ubf,\xi) \right].    
 \end{equation}
Since $\E[\nabla f(\,\cdot\,,\xi)]=\nabla \E[f(\,\cdot\,,\xi)] $ in our settings,   we have
 \begin{align}
\E[\nabla f^{\delta}(\,\cdot\,,\,\xi)] = \nabla \E[f^{\delta}(\,\cdot\,,\xi)].
 \end{align}

The lemma below provides a characterization on how the randomized smoothing scheme can be effective in approximating both the zeroth- and first-order information of   $f(\,\cdot\,,\xi)$.
 
 \begin{lemma}\label{lemma approx delta} Under Assumption \ref{Lipschitz}, the below statements hold for any   $\delta>0$\emph{:}
 \begin{itemize}
     \item[\emph{(a).}] Let $f^\delta$ be defined as in \eqref{delta function}. Then, for any $\xbf\in\R^d$ and almost every $\xi\in\Theta$,
 \begin{align}
  \vert f^\delta(\xbf,\xi)-f(\xbf,\xi)\vert \leq\frac{L}{2}  d\delta^2.\nonumber
\end{align}
\item[\emph{(b).}] For any $\vbf,\,\xbf\in\R^d$, and almost every $\xi\in\Theta$, 
  \begin{align}
  \left\vert\E_{\ubf}\left[\frac{f(\xbf+\delta\ubf,\xi)-f(\xbf,\xi)}{\delta}\cdot \ubf^\top \vbf\right]- \langle \nabla f(\xbf,\xi),\,  \vbf\rangle\right\vert
    \leq \frac{L \delta d^{3/2}}{2}\Vert \vbf\Vert. \nonumber
 \end{align}

 \end{itemize} 
 \end{lemma}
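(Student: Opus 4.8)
\textbf{Proof proposal for Lemma \ref{lemma approx delta}.}

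The plan is to handle the two claims separately, each reducing to an application of the Lipschitz-gradient hypothesis (Assumption \ref{Lipschitz}) after the probabilistic smoothing has been written out explicitly. For part (a), I would start from the integral form of the remainder in a first-order Taylor expansion: for each fixed realization $\ubf\in\{-1,1\}^d$ and each $\xi$, write
\[
f(\xbf+\delta\ubf,\xi)-f(\xbf,\xi)=\delta\int_0^1\langle\nabla f(\xbf+t\delta\ubf,\xi),\ubf\rangle\,\diff t,
\]
then add and subtract $\langle\nabla f(\xbf,\xi),\ubf\rangle$ inside the integral. The ``added back'' term is $\delta\langle\nabla f(\xbf,\xi),\ubf\rangle$, which vanishes when we take $\E_{\ubf}$ because each $u_i$ is mean-zero; this is the key cancellation. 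The residual is bounded using Assumption \ref{Lipschitz}: $\vert\langle\nabla f(\xbf+t\delta\ubf,\xi)-\nabla f(\xbf,\xi),\ubf\rangle\vert\le L\,t\delta\Vert\ubf\Vert^2=L t\delta d$, since $\Vert\ubf\Vert^2=d$. Integrating $t$ over $[0,1]$ gives the factor $1/2$, and taking $\E_{\ubf}$ (a convex average) preserves the bound, yielding $\vert f^\delta(\xbf,\xi)-f(\xbf,\xi)\vert\le \frac{L}{2}d\delta^2$.

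For part (b), the structure is parallel but one extra factor of $d^{1/2}$ appears from Cauchy--Schwarz. Again expand $f(\xbf+\delta\ubf,\xi)-f(\xbf,\xi)=\delta\int_0^1\langle\nabla f(\xbf+t\delta\ubf,\xi),\ubf\rangle\diff t$, divide by $\delta$, multiply by $\ubf^\top\vbf$, and subtract $\langle\nabla f(\xbf,\xi),\vbf\rangle$. I would split $\langle\nabla f(\xbf,\xi),\vbf\rangle$ as $\E_{\ubf}[(\ubf^\top\vbf)(\ubf^\top\nabla f(\xbf,\xi))]$; this identity holds because $\E_{\ubf}[\ubf\ubf^\top]=I$ (the coordinates of $\ubf$ are independent, mean zero, unit variance). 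Hence the quantity inside the absolute value becomes
\[
\E_{\ubf}\!\left[(\ubf^\top\vbf)\int_0^1\langle\nabla f(\xbf+t\delta\ubf,\xi)-\nabla f(\xbf,\xi),\ubf\rangle\diff t\right].
\]
Bounding the inner integrand by $Lt\delta\Vert\ubf\Vert^2=Lt\delta d$ and integrating over $t$ gives $\frac{L\delta d}{2}$ as a bound on the integral; then $\vert\ubf^\top\vbf\vert\le\Vert\ubf\Vert\,\Vert\vbf\Vert=d^{1/2}\Vert\vbf\Vert$ by Cauchy--Schwarz, and taking $\E_{\ubf}$ of the product of these deterministic bounds produces $\frac{L\delta d^{3/2}}{2}\Vert\vbf\Vert$.

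The only mildly delicate point — not really an obstacle — is justifying the two mean-zero / identity-covariance facts about $\ubf$ that produce the crucial cancellations (namely $\E_{\ubf}[\ubf]=\0$ and $\E_{\ubf}[\ubf\ubf^\top]=I$), which are immediate from the coordinatewise independent uniform-on-$\{-1,1\}$ structure, together with Fubini to swap $\E_{\ubf}$ with the $t$-integral; both are routine since everything in sight is bounded on the compact index set $\{-1,1\}^d\times[0,1]$. I would also note that the claimed bounds hold for \emph{every} $\xbf\in\R^d$ (not just $\Vert\xbf\Vert_1\le R$), which is consistent with Assumption \ref{Lipschitz} being used only along the segment between $\xbf$ and $\xbf+\delta\ubf$; strictly this requires the Lipschitz bound to hold on all of $\R^d$, or the statement to be read with that implicit understanding, which is how the later analysis uses it.
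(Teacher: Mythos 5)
Your proposal is correct and follows essentially the same route as the paper's proof: both parts rest on the cancellation $\E_{\ubf}[\ubf]=\0$ (resp.\ $\E_{\ubf}[\ubf\ubf^\top]=I$) together with the Lipschitz-gradient bound $\vert f(\xbf+\delta\ubf,\xi)-f(\xbf,\xi)-\delta\langle\nabla f(\xbf,\xi),\ubf\rangle\vert\le\frac{L}{2}\delta^2\Vert\ubf\Vert^2$, $\Vert\ubf\Vert^2=d$, and Cauchy--Schwarz for $\vert\ubf^\top\vbf\vert$. The only cosmetic difference is that you derive that quadratic remainder bound from the integral form of Taylor's theorem, whereas the paper invokes it directly.
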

 \begin{proof}
 The proof of Part (a) is similar to that in \cite{nesterov2017random}, except that $\ubf$ therein follows a different distribution.
In view of $\E_{\ubf}[\ubf]=\0$, we  obtain   that, for almost every $\xi\in\Theta$,
  \begin{align}
   \left\vert f^\delta(\xbf,\xi)-f(\xbf,\xi) \right\vert =\,&   \left|\frac{1}{2^d}\sum_{\ubf\in\{-1,\,1\}^d}\left\{ f(\xbf+\delta\ubf,\xi) -f(\xbf,\xi)-\delta\left\langle \nabla f(\xbf,\xi),\, \ubf\right\rangle\right\}\right|\nonumber
 \\  \leq \,&   \frac{1}{2^d}\sum_{\ubf\in\{-1,\,1\}^d}\left|\left\{ f(\xbf+\delta\ubf,\xi) -f(\xbf,\xi)-\delta\left\langle \nabla f(\xbf,\xi),\, \ubf\right\rangle\right\}\right|\nonumber
  \\  \leq\,&    \frac{1}{2^d}\sum_{\ubf\in\{-1,\,1\}^d}\frac{L}{2}\delta^2 \Vert\ubf\Vert^2,\label{to invoke lipscthiz}
 \end{align}
 where the inequality in \eqref{to invoke lipscthiz} follows from the Lipschitz continuity of $\nabla f(\,\cdot\,,\xi)$. The results in Part (a) immediately follows from the above in view of  $\Vert \ubf\Vert^2= d$, as per the underlying distribution of $\ubf$.
 
For Part (b), similarly, since $\vert f(\xbf+\delta\ubf,\xi)-f(\xbf,\xi)-\langle \nabla f(\xbf,\xi),\,\delta\ubf\rangle\vert\leq
 \frac{L}{2}\Vert \delta \ubf\Vert^2$, we have
 \[  
 \begin{aligned}
  &    \left\vert \E_{\ubf}\left[\frac{f(\xbf+\delta\ubf,\xi)-f(\xbf,\xi)}{\delta}\cdot \ubf^\top \vbf\right]-\E_{\ubf}\left[\langle \nabla f(\xbf,\xi),\,  \ubf\rangle\cdot \ubf^\top \vbf\right]\right\vert\\
    \leq&  \,\E_{\ubf}\left\vert \left[\frac{f(\xbf+\delta\ubf,\xi)-f(\xbf,\xi)}{\delta}\cdot \ubf^\top \vbf\right]-\left[\langle \nabla f(\xbf,\xi),\,  \ubf\rangle\cdot \ubf^\top \vbf\right]\right\vert
\\ \leq&  \frac{L \delta}{2}\Vert  \ubf\Vert^2\vert \ubf^\top \vbf\vert\leq \frac{L \delta}{2}\Vert  \ubf\Vert^3\Vert   \vbf\Vert.
 \end{aligned}
 \]
In view of 
 $\Vert \ubf\Vert^2=d$, and $\E_{\ubf}\left[\langle \nabla f(\xbf,\xi),\,  \ubf\rangle\cdot \ubf^\top \vbf\right] = \langle \nabla f(\xbf,\xi),\vbf\rangle$, we then immediately have the desired result. \qed
 \end{proof}

With \eqref{delta function} and \eqref{delta function gradient}, it is easy to verify the following properties.
 \begin{itemize}
 \item[(a)] For almost every $\xi\in\Theta$, because $\nabla f(\,\cdot\,,\xi)$ is $L$-Lipschitz continuous, so is $\nabla f^\delta(\,\cdot\,,\xi)$.
 \item[(b)] Because  $\E[f(\xbf,\xi)]$ is convex and continuously differnetiable in $\xbf$, so is $F^\delta(\xbf):=\E[f^\delta(\xbf,\xi)]$.
 
 
  \item[(c)] By  the convexity of $F(\,\cdot\,)$, we have  
 \begin{align}F^\delta(\xbf)&=\E\left\{\E_{\ubf}[f(\xbf+\delta\ubf,\xi)]\right\}
 = \E_{\ubf}\left\{\E[f(\xbf+\delta\ubf,\xi)]\right\}\nonumber\\
 &= \E_{\ubf} [F(x+\delta\ubf)]\geq \,F(\xbf)+\E_{\ubf}[\langle \delta \ubf,\,\nabla F(\xbf)\rangle]= F(\xbf).\label{convexity inequality here}
 \end{align}
 
 \item[(d)] Consider the case where $F(\xbf)=\E[f(\xbf,\xi)]$ is strongly convex in $\xbf$ with modulus $\mu$. $F^\delta(\xbf)$ must also be strongly convex. Further invoking Assumption \ref{Lipschitz}, we have, for all $\xbf_1,\,\xbf_2\in\R^d$,
\begin{align}
F^\delta(\xbf_1)-F^\delta(\xbf_2)   \geq&\,  \langle \nabla F^\delta(\xbf_2),\,\xbf_1-\xbf_2 \rangle+\frac{\mu}{2}\Vert\xbf_1-\xbf_2\Vert^2.
 \label{test new results here useful}
\end{align}
 \end{itemize}

\section{The Proposed Sparsity-Inducing Stochastic Gradient-Free (SI-SGF) Algorithm}\label{sec-algorithm}

Our proposed method is shown in Algorithm \ref{main-alg}. At each iteration, it calls the subroutine   in Algorithm \ref{sub-alg}.  In particular, at the $k$-th iteration of Algorithm \ref{main-alg}, $M>0$ is the mini-batch size, $\gamma_k>0$ is the step size,
and $U_k>0$ is a parameter input to Algorithm \ref{sub-alg}. Given the parameter $U\leftarrow U_k$, Algorithm \ref{sub-alg}  takes the input  $\xbf\leftarrow \mathbf x^{k}-\gamma_k \mathbf g_k^{\delta}(\xbf^k)$ and outputs
$\vbf$, which is assigned to $\xbf^{k+1}$ in Algorithm \ref{main-alg}, i.e.,  $\xbf^{k+1}\leftarrow \vbf$.



\begin{algorithm}[H]\small
\SetKwInput{Initialization}{Initialization}
\SetKwInput{Output}{Output}
\Initialization{ Set hyper-parameters $\{\gamma_k\}$, $M$,  $\{U_k\}$, and $K$. Let $\mathbf x^1$ be a fixed feasible solution such that  $\Vert \xbf^1\Vert_1\leq R$ and $\Vert \xbf^1\Vert_0\leq \frac{2R}{U_1}$ (e.g., $\xbf^1:=\0$).}
\For{$k=1,...,K$,}{
\begin{description}[rightmargin=15mm]
\item[Step 1.] Generate a sample mini-batch of size $M$, $(\xi^{k,1},...,\xi^{k,M})$, and compute 
\begin{align}
\mathbf g_k^{\delta}(\xbf^k):=  \,\frac{1}{M}\sum_{m=1}^M \left[\frac{f(\xbf^k+\delta \ubf^{k,m},\,\xi^{k,m})-f(\xbf^k,\xi^{k,m})}{\delta} \ubf^{k,m}\right],\nonumber
\end{align}
where   $\{\ubf^{k,m}\}$ are iid random realizations of the $d$-variate random vector each entry of which follows a discrete uniform distribution on $\{-1,\,1\}$.

\item[Step 2.] Invoke the subroutine in Algorithm \ref{sub-alg}, with input $\mathbf x^{k}-\gamma_k \mathbf g_k^{\delta}(\xbf^k)$,  parameter $U_k$, and output $\xbf^{k+1}$.
\end{description} 
}

\Output{$\xbf^{Y}$ for a random $Y$, which has a  discrete distribution on $[K]$ with a probability mass function $\mathbb P\left[Y=k\right]=\frac{\gamma_{k-1}^{-1}}{\sum_{k=1}^K\gamma_{k-1}^{-1}}$. 
}
\caption{Sparsity-inducing stochastic gradient-free (SI-SGF) algorithm.} \label{main-alg}
\end{algorithm}

Note that the output of the algorithm above  is a randomly drawn element from the algorithm's solution sequence. This output scheme follows  \cite{ghadimi2013stochastic}. We describe   alternative output schemes, which tend to exhibit stronger empirical performance,  in Section \ref{sec alternative output}. 

The design of Algorithm \ref{main-alg} mimics a standard stochastic first-order method (S-FOM), such as in \cite{ghadimi2013stochastic}, except for two differences. First,  we follow \cite{nesterov2017random,cai2020zeroth,ghadimi2013stochastic} in approximating  the stochastic gradient of the S-FOM by a randomized  estimator     as discussed in Section  \ref{gradient approximation} above. Second, we invoke a subroutine to perform sparse projection at each iteration.  The pseudo-code of the  this subroutine is  presented in Algorithm \ref{sub-alg} below.

\begin{algorithm}[H]
\small
\SetKwInput{Input}{Input}
\SetKwInput{Output}{Output}
\Input{$\mathbf x=(x_i)$ and  parameter $U$. }
\begin{description}[rightmargin=5mm]
\item[Step 1.]~Let $\xbf_+=(\max\{x_i,0\})$ and $\xbf_-=(\max\{-x_i,0\})$. Sort the components of the vector $\widetilde\xbf=[\xbf_+;\,\xbf_-]$ in a descending order, and let $(\widetilde x_{(i)})$ denote the sorted vector. Below, $z_{(i)}$ and $v_{(i)}$ follow the same indexing of components as $\widetilde x_{(i)}$.

\item[Step 2.]~Calculate $\zbf=(z_i)\in\R^{2d}$, for $i=1,\cdots, 2d$, by
\begin{align}
z_{i}=\begin{cases}
 {\widetilde x}_{i},&\text{if $\widetilde x_{i}\geq U$};
\\
0, &\text{otherwise}.
\end{cases}\nonumber
\end{align}

\item[Step 3.]~\textbf{If} $\mathbf 1^\top\zbf\leq R$,  set $\widetilde\vbf = \zbf$. 

~~~~~~\,\textbf{Else} compute $\widetilde\vbf=(\widetilde v_{i})$, for $i=1,\cdots,\ 2d$, by
\[  
\widetilde v_{(i)}= \begin{cases}
\widetilde x_{(i)}+\tau, &\text{if $i\leq \rho$};
\\
0,&\text{otherwise},
\end{cases}
\]
~~~~~~~~~~~~~~where $\tau= \frac{R-\sum_{i=1}^\rho  \widetilde x_{(i)}}{\rho}$ and 
$\rho=\max\left\{j:\, \widetilde x_{(j)}+\frac{R-\sum_{i=1}^{j} \widetilde  x_{(i)}}{j}\geq U\right\}$.
\end{description}
\Output{$\vbf=(\widetilde v_{i}:\,i=1,...,d)-(\widetilde v_{i}:\,i=d+1,...,2d)$.}
\caption{Per-iteration subroutine of SI-SGF.} \label{sub-alg}
\end{algorithm}


As mentioned,  Algorithm \ref{sub-alg}  equivalently solves a sparse projection problem, whose  exact formulation   is made explicit in the proposition below. Recall that, for Algorithm \ref{sub-alg}, $U$ is a user-specified parameter and $\xbf$ is the input.

Algorithm  \ref{sub-alg} involves $O(d\ln d)$-many arithmetic operations and thus is a reasonably efficient. In comparison, the randomized smoothing scheme in \eqref{randomized smoothing} yields at least $O(d)$-many arithmetic operations.

In the pseudo-codes above, we specify that Algorithm \ref{sub-alg} should run for $k=1,...,K$.  Yet, in implementation, the algorithm may terminate at the $(K-1)$-th iteration, because the  output of the algorithm relies only on results from the first $(K-1)$-many iterations.  The $K$-th iteration is used only for our subsequent theoretical analysis, which happens to involve $\xbf^{K+1}$.

\begin{proposition}\label{lemma1} 
Let $a,\,\lambda, \gamma$ be arbitrarily chosen positive scalars such that  $\gamma   \geq 2a$ and $a\lambda=U$. Let $\vbf$ denote the output of Algorithm \ref{sub-alg}. Then, we have:
\begin{itemize}
\item[a.] For all $i=1, \cdots, d$, either $\vert v_i\vert \geq U$ or $v_i=0$. 
\item[b.] Moreover, $\vbf$ is the optimal solution to the following optimization problem.
\begin{align}\label{lemma1-KKT-solution-3_solution}
  \min_{\vbf'\in\R^d}\,\left\{\frac{1}{2\gamma}\left\Vert \vbf'-\xbf \right\Vert^2 +\sum_{i=1}^{d} \frac{[a\lambda-\vert v_i\vert]_+}{a}\cdot \vert v_i'\vert:\, \Vert \vbf'\Vert_1 \leq R\right\}.
\end{align}
\end{itemize}
\end{proposition}
\begin{proof} See Appendix \ref{Proof of proposition 1}.\qed
\end{proof}

 \section{Main complexity results for the SI-SGF}\label{main results sec}
  In this section, we present our main complexity results  for the SI-SGF in solving \eqref{SP problem} when $F$ is convex   or strongly convex in Sections \ref{sec: convex} and \ref{sec: strongly convex}, respectively. In both cases, we prove that SI-SGF is dimension-insensitive.  Section \ref{sec alternative output} presents an alternative  output scheme, which is potentially more practical than the default random output in Algorithm \ref{main-alg}.  For all the proofs, we  mostly focus on the orders of complexity rates and  the constants involved may not have been optimized.
  

\subsection{Complexity of the SI-SGF in solving  convex S-ZOO problems}\label{sec: convex}
The complexity analysis relies on the following technical lemma, whose proof is postponed till Section \ref{sec proof lemma here} of the Appendix.
 \begin{lemma}\label{bounding one particular term}
  Suppose that Assumption \ref{mean and variance} holds. Let $(\xi^{m}:\,m=1,...,M)$ be a sample mini-batch of the random parameters $\xi$ in Problem \eqref{SP problem} and  $\ubf^m$ be a vector of iid symmetric Bernoulli random variables; that is, they are uniformly distributed random variables on $\{-1,\,1\}$.  For any $\xbf\in\R^d$, it holds that
 \begin{multline}
  \E_{\mathcal V_M}\left[\max_{\substack{\widehat S\in\{1,...,d\}:
  \\\vert \widehat S\vert\leq\frac{2R}{a\lambda}}}\left\Vert \sum_{m=1}^M\frac{f(\xbf+\delta \ubf^{m},\xi^{m})-f(\xbf,\xi^{m})}{M\delta}\ubf^{m}_{\widehat S} -\sum_{m=1}^M\frac{\nabla_{\widehat S}f(\xbf,\xi^{m})}{M}\right\Vert^2\right]   \\ \leq\,    \frac{{L^2\delta^2} d^2\cdot R}{a\lambda}+\frac{772  R\cdot \ln d}{a\lambda}\cdot  \frac{\sigma^2+\Vert\nabla F(\xbf)\Vert^2 }{M},\nonumber
\end{multline}
where $\mathcal V^M:=\left((\xi^m,\,\ubf^m):\,m=1,...,M\right)$.
 \end{lemma}
 \begin{proof}
See Section \ref{sec proof lemma here}.\qed
 \end{proof}
  
Let the parameters of Algorithms  \ref{main-alg} and  \ref{sub-alg} be set as follows.
\begin{equation}\label{parameter settings}
  \begin{aligned} &    U_k=U=a\lambda,
~\lambda=\frac{200L}{K\varpi},~a=\frac{1}{100L},~\gamma_k=\gamma=\frac{1}{50L},~\text{for}~k=0,...,K,\\
&  \delta\leq  \frac{\theta}{Kd},~~~
M=\left\lceil\frac{50K^2\varpi\max\{1,\,\sigma^2\}}{L^2}\cdot \ln d\right\rceil,
  \end{aligned}    
\end{equation}
where $\theta>0$ and $\varpi>0$  are some user-specified hyper-parameters.
Now we are ready to present the main result for convex S-ZOO problems.  



  \vspace{-5pt}
 \begin{theorem}\label{main theorem}
Suppose that Assumptions \ref{sample simulation} through \ref{bounded} hold.   Given that the hyper-parameters are set as in \eqref{parameter settings}  and that $K\geq  30 L^2 R$, there exists  a constant $C_1>0$ such that
the output solution of Algorithm \ref{main-alg} satisfies
 \begin{multline}
  \mathbf E\left[F(\xbf^Y)-F(\xbf^*)\right]\leq\, \frac{C_1L  \Vert \xbf^1-\xbf^*\Vert^2 }{K}+\frac{C_1 LR}{K}\cdot (1+\varpi^{-1}+\varpi\theta^2)
 \\  +\frac{C_1  L}{K^2}\cdot \left(\frac{\theta^2}{d}+\frac{\varpi^{-1}}{\ln d}\right),\label{error bound in convex}
\end{multline}
where $\mathbf E$ is the expectation taken over all the random variables in Algorithm \ref{main-alg}
 \end{theorem}
\begin{proof}
Firstly, for some $a$ and $\lambda$ such that $a\cdot \lambda =U$ and $a\leq \frac{\gamma_k}{2}$,
Proposition \ref{lemma1} (therein with $\xbf:=\xbf^k-\gamma_k\mathbf g_k^{\delta}(\xbf^k)$, $\gamma:=\gamma_k$ and  $\vbf:=\xbf^{k+1}=(x_{i}^{k+1})$) implies that
Algorithm \ref{sub-alg} computes  an  optimal solution $\xbf^{k+1}$ to the following optimization problem:
\begin{align}\nonumber
  \min_{\vbf'\in\R^d}\,\left\{\frac{1}{2\gamma_k}\left\Vert \vbf'-\xbf^k+\gamma_k\mathbf g_k^{\delta}(\xbf^k) \right\Vert^2 +\sum_{i=1}^{d} \frac{[a\lambda-\vert v_i\vert]_+}{a} \vert v_i'\vert:\, \Vert \vbf'\Vert_1 \leq R\right\}.
\end{align}
The first-order necessary optimality conditions of the problem above yield that, for all $\xbf:\,\Vert\xbf\Vert_1\leq R$:
\begin{equation}\label{first-order condition 1}
  \left\langle \frac{1}{\gamma_k} \left(\xbf^{k+1}-\xbf^k+\gamma_k\mathbf g_k^{\delta}(\xbf^k)\right) +\boldsymbol\varrho^{k+1},\,\xbf-\xbf^{k+1}\right\rangle\geq 0,
\end{equation}
where $\boldsymbol\varrho^{k+1}=\left(\frac{[a\lambda-\vert x_{i}^{k+1}\vert]_+}{a} \cdot \Gamma_{\vert x_{i}^{k+1}\vert}:\,i=1,...,d\right)$ and $\Gamma_{\vert x_{i}^{k+1}\vert}$ is a subgradient of the absolute value function $\vert\cdot\vert$ at $x_{i}^{k+1}$. If we plug in  $\xbf:=\xbf^*\in \arg\min_{\xbf\in\R^d}\,F(\xbf)\cap\{\xbf:\,\Vert \xbf\Vert_1\leq R\}$ and invoke the convexity of $\vert\cdot\vert$, we  have
\[
  \left\langle \boldsymbol\varrho^{k+1},\,\xbf^*-\xbf^{k+1}\right\rangle
\leq \sum_{i=1}^d \frac{[a\lambda-\vert x_{i}^{k+1}\vert]_+}{a}(\vert x^*_i\vert-\vert x_{i}^{k+1}\vert). \]
Let $\lambda_i:=\frac{[a\lambda-\vert x_{i}^{k+1}\vert]_+}{a}$. As per  Lemma \ref{lemma1}, we have    $ \lambda_i=\frac{[a\lambda-\vert x_{i}^{k+1}\vert]_+}{a}=0$ for all $i$ such that $x_i^{k+1}\neq 0$ and $\lambda_i= \frac{[a\lambda-\vert x_{i}^{k+1}\vert]_+}{a}=\lambda$ for all $i$ such that $x_i^{k+1}=0$. 
Therefore, we may continue from \eqref{first-order condition 1} above (with $\xbf:=\xbf^*$ therein) to obtain   
\begin{multline}
\Vert \xbf^{k+1}\Vert^2-(\xbf^{k+1})^\top\xbf^k+(\xbf^{*})^\top(\xbf^{k}-\xbf^{k+1})
\\  \leq \left\langle \gamma_k \gbf_k^\delta(\xbf^k),\, \xbf^*-\xbf^{k+1}\right\rangle+\gamma_k\sum_{i=1}^d\lambda_i(\vert x^*_i\vert-\vert x_i^{k+1}\vert).\label{to be useful}\end{multline}
Notice that
\begin{align}
  &\nonumber\frac{1}{2}\Vert \xbf^{k+1}-\xbf^*\Vert^2- \frac{1}{2}\Vert \xbf^k-\xbf^*\Vert^2
  \\  =\,&   \Vert \xbf^{k+1}\Vert^2+\left\langle \xbf^{k} - \xbf^{k+1},\,\xbf^*\right\rangle-\frac{1}{2}\Vert \xbf^{k}-\xbf^{k+1} \Vert^2-(\xbf^{k+1})^\top\xbf^{k}.\nonumber
  \end{align}
Let $S_k:=\{i:\,x_i^{k}-x_i^{k+1}\neq 0\}$,  $\mathbf g(\xbf^k): = \frac{1}{M} \sum_{m=1}^M\nabla f(\xbf^k,\xi^{k,m})$, and, thus, $\mathbf g_{S_k}(\xbf^k) = \frac{1}{M} \sum_{m=1}^M\nabla_{S_k} f(\xbf^k,\xi^{k,m})$.  By Lemma \ref{lemma1},  we know that $\Vert \xbf^k\Vert_0\leq \frac{R}{a\lambda}$ and $\Vert \xbf^{k+1}\Vert_0\leq \frac{R}{a\lambda}$. This comes immediately from the observation that, due to   Lemma \ref{lemma1},  $\vert x_i^k\vert \geq U$ if $x_i^k\neq 0$ (and  $\vert x_i^{k+1}\vert \geq U$ if $x_i^{k+1}\neq 0$), as well as  $\Vert \xbf^k\Vert_1\leq R$ (and $\Vert \xbf^{k+1}\Vert_1\leq R$, respectively). Consequently, $\vert S_k\vert\leq  \frac{2 R}{U}= \frac{2 R}{a\lambda}$.

 In view of \eqref{to be useful}, we obtain from the above that
  \begin{align}
  \nonumber &  \frac{1}{2}\Vert \xbf^{k+1}-\xbf^*\Vert^2- \frac{1}{2}\Vert \xbf^k-\xbf^*\Vert^2
 \\ \nonumber\leq\,&  \langle \gamma_k\mathbf g_k^{\delta}(\xbf^k),\,\xbf^*-\xbf^{k+1}\rangle- \frac{1}{2}\Vert \xbf^{k+1}-\xbf^{k}\Vert^2 +\gamma_k\sum_{i=1}^d\lambda_i (\vert x^*_i\vert-\vert x_i^{k+1}\vert) 
\\
 \nonumber \leq\,&   \langle \gamma_k\mathbf g_k^{\delta}(\xbf^k),\,\xbf^*-\xbf^{k}\rangle+\langle \gamma_k\mathbf g_k^{\delta}(\xbf^k),\,\xbf^{k}-\xbf^{k+1}\rangle 
- \frac{1}{2}\Vert \xbf^{k+1}-\xbf^{k}\Vert^2\nonumber
 \\&~~~~~+\gamma_k\sum_{i=1}^d\lambda_i\left(\vert x_i^*\vert -\vert x_i^{k+1}\vert\right) \nonumber
\\\label{3rd last inequality to explain} 
= &  \langle \gamma_k\mathbf g_k^{\delta}(\xbf^k),\,\xbf^*-\xbf^{k}\rangle+\langle \gamma_k\mathbf g^{\delta}_{S_k}(\xbf^k),\,\xbf^{k}_{S_k}-\xbf^{k+1}_{S_k}\rangle 
- \frac{1}{2}\Vert \xbf^{k+1}-\xbf^{k}\Vert^2  
 \\&~~~~~+\gamma_k\sum_{i=1}^d\lambda_i\left(\vert x_i^*\vert -\vert x_i^{k+1}\vert\right) \nonumber
 \\
 \label{2nd last inequality to explain} 
\leq\,&   \langle \gamma_k\mathbf g_k^{\delta}(\xbf^k),\,\xbf^*-\xbf^{k}\rangle+\frac{\gamma_k^2}{2\eta} \Vert \mathbf g^{\delta}_{S_k}(\xbf^k)\Vert^2  +\frac{\eta}{2}\Vert\xbf_{S_k}^{k+1}-\xbf_{S_k}^{k}\Vert^2 
\\&  - \frac{1}{2}\Vert \xbf^{k+1}-\xbf^{k}\Vert^2
+\gamma_k\sum_{i=1}^d\lambda_i\left(\vert x_i^*\vert -\vert x_i^{k+1}\vert\right) \nonumber
\\ \nonumber \leq\,&   \langle \gamma_k\mathbf g_k^{\delta}(\xbf^k),\,\xbf^*-\xbf^{k}\rangle
+\frac{\eta}{2}\Vert\xbf^{k+1}-\xbf^{k}\Vert^2 
- \frac{1}{2}\Vert \xbf^{k+1}-\xbf^{k}\Vert^2 \nonumber
\\
&  +\frac{\gamma_k^2}{2\eta} \Vert \mathbf g^{\delta}_{S_k}(\xbf^k)-\mathbf g_{S_k}(\xbf^k)+\mathbf g_{S_k}(\xbf^k)-\nabla_{S_k} F(\xbf^k)+\nabla_{S_k} F(\xbf^k)\Vert^2\nonumber
\\
&  +\gamma_k\sum_{i=1}^d\lambda_i\left(\vert x_i^*\vert -\vert x_i^{k+1}\vert\right),
\label{inequality to use direction results}
\end{align}
 where the \eqref{3rd last inequality to explain} is by the definition of $\gbf_{S_k}(\xbf^k)$ and \eqref{2nd last inequality to explain} is due to $\frac{\Vert\mathbf a\Vert^2 +\Vert\mathbf b\Vert^2}{2}\geq \langle \mathbf a,\,\mathbf b\rangle$ for arbitrary vectors $\mathbf a,\,\mathbf b\in\R^d$.
 As we set $\gamma_k = \gamma$, and $\eta = 1$,  we can continue from the above to obtain
\begin{align}
&\frac{1}{2}\Vert \xbf^{k+1}-\xbf^*\Vert^2- \frac{1}{2}\Vert \xbf^k-\xbf^*\Vert^2\nonumber
\\
\nonumber \leq\,&   \langle \gamma\mathbf g_k^{\delta}(\xbf^k),\,\xbf^*-\xbf^{k}\rangle 
+\gamma\sum_{i=1}^d\lambda_i\vert x_i^*\vert  +\frac{3\gamma^2}{2} \Vert  \nabla F(\xbf^k)\Vert^2\nonumber
\\
&  +\frac{3\gamma^2}{2} \Vert \mathbf g^{\delta}_{S_k}(\xbf^k)-\mathbf g_{S_k}(\xbf^k)\Vert^2+\frac{3\gamma^2}{2} \Vert \mathbf g (\xbf^k)-\nabla  F(\xbf^k)\Vert^2.\label{to start with}
 \end{align}

By Part (a) of Lemma \ref{lemma approx delta}, we may immediately obtain $\vert F^\delta(\xbf)-F(\xbf)\vert\leq \frac{L}{2}d\delta^2$. Further invoking Eq.\ \eqref{convexity inequality here}, and the convexity of $F$,  we know that 
 \begin{align}  & \E_{\left((\xi^{k,m},\ubf^{k,m}):\,m=1,...,M\right)}\left[\langle  \mathbf g_k^{\delta}(\xbf^k),\,\xbf^*-\xbf^{k}\rangle\right]\nonumber
 \\=&\E_{(\ubf^{k,m}:\,m=1,...,M)}\left[M^{-1}\sum_{m=1}^M\left\langle \frac{F(\xbf^k+\mu\cdot \ubf^{k,m})-F(\xbf^k)}{\delta}\ubf^{k,m}\,\xbf^*-\xbf^{k}\right\rangle\right]\nonumber\\
 \leq&\E_{(\ubf^{k,m}:\,m=1,...,M)}\left[M^{-1}\sum_{m=1}^M \left\langle \nabla F(\xbf^k+\mu\cdot \ubf^{k,m}),\ubf^{k,m}\right\rangle \cdot \left\langle \ubf^{k,m}, \,\xbf^*-\xbf^{k}\right\rangle\right]\nonumber\\
 =&\left\langle   \nabla F^\delta(\xbf^k),\,\xbf^*-\xbf^{k}\right\rangle \leq F^\delta(\xbf^*)-F^\delta(\xbf^k)\leq F(\xbf^*)-F(\xbf^k)+\frac{L}{2}\delta^2d.\label{convex expect here}
 \end{align}
 Meanwhile, we can obtain an upper bound on $\Vert \mathbf g^{\delta}_{S_k}(\xbf^k)-\mathbf g_{S_k}(\xbf^k)\Vert^2$ in \eqref{to start with} by invoking Lemma \ref{bounding one particular term}.  More specifically, we have
\begin{align} 
  &\E_{\left((\xi^{k,m},\,\ubf^{k,m}):\,m=1,...,M\right)}\left[\Vert \mathbf g^{\delta}_{S_k}(\xbf^k)-\mathbf g_{S_k}(\xbf^k)\Vert^2\right]\nonumber
\\ \stackrel{\vert S_k\vert\leq \frac{2R}{a\lambda}}{\leq} ~~&\E_{\left((\xi^{k,m},\,\ubf^{k,m}):\,m=1,...,M\right)}\left[\max_{\widehat S\in\{1,...,d\}:\vert \widehat S\vert\leq\frac{2R}{a\lambda}}\left.\Vert \mathbf g^{\delta}_{\widehat S}(\xbf^{k}) - \gbf_{\widehat S}(\xbf^k)\Vert^2\right.\right]   \nonumber
  \\ \stackrel{\text{Lemma \ref{bounding one particular term}}}{\leq}~~&   \frac{{L^2\delta^2} d^2 R}{a\lambda}+\frac{772  R\cdot \ln d}{a\lambda}\cdot  \frac{\sigma^2+\Vert\nabla F(\xbf^k)\Vert^2 }{M}.\label{second term}
\end{align}
Combining the above with \eqref{to start with}, \eqref{convex expect here}, and Assumption \ref{mean and variance}, and taking expectation with respect to $\mathcal W =((\xi^{k,m},\ubf^{k,m}):\, k=1,...,K,\,m=1,...,M)$, we obtain
\begin{align}
&  \nonumber\frac{1}{2}\E_{\mathcal W}\left[\Vert \xbf^{k+1}-\xbf^*\Vert^2\right]- \frac{1}{2}\E_{\mathcal W}\left[\Vert \xbf^k-\xbf^*\Vert^2\right]\\
  \nonumber \leq\,&  \E_{\mathcal W}[\gamma   F(\xbf^*)]-\E_{\mathcal W}[\gamma F(\xbf^k)]+\frac{\gamma L \delta^2d}{2}
+\gamma\lambda\Vert\xbf^*\Vert_1 +\frac{3\gamma^2}{2} \E_{\mathcal W}\left[\Vert  \nabla F(\xbf^k)\Vert^2\right]\nonumber
\\
&  +\frac{3\gamma^2}{2}   \left( \frac{{L^2\delta^2} d^2R}{a\lambda}+\frac{772  R\cdot \ln d}{a\lambda}\cdot  \frac{\sigma^2+\Vert\nabla F(\xbf^k)\Vert^2 }{M}\right)+\frac{3\gamma^2\sigma^2}{2 M}.
\label{to start with 2}
 \end{align} 
 
By the well-known inequality for convex and smooth function (with $L$-Lipschitz gradient), we have  $F(\xbf)-F(\xbf^*)-\langle \nabla F(\xbf^*),\xbf-\xbf^*\rangle\geq \frac{1}{2L}\Vert \nabla F(\xbf)-\nabla F(\xbf^*)\Vert^2$  for any $\xbf\in\R^d$.
As $\nabla F(\xbf^*)=\0$, we then have $F(\xbf^k)-F(\xbf^*)\geq \frac{1}{2L} \Vert \nabla F(\xbf^k)-\nabla F(\xbf^*)\Vert^2=\frac{1}{2L}\Vert\nabla F(\xbf^k)\Vert^2$. This, combined with \eqref{to start with 2}, leads to
\[  
 \begin{aligned}
  &  \frac{1}{2}\E_{\mathcal W}\left[\Vert \xbf^{k+1}-\xbf^*\Vert^2\right]- \frac{1}{2}\E_{\mathcal W}\left[\Vert \xbf^k-\xbf^*\Vert^2\right]\\
  \leq\,&  \left(\gamma-3L\gamma^2-\frac{2316 LR \gamma^2\ln d}{a\lambda M}\right)\left( F(\xbf^*)-\E_{\mathcal W}[F(\xbf^k)]\right)+\frac{\gamma L\delta^2 d}{2}
+\gamma\lambda\Vert\xbf^*\Vert_1
\\
&  +\frac{3\gamma^2}{2a\lambda}  {L^2\delta^2} d^2 R +\frac{1158 R \gamma^2\sigma^2}{a\lambda M}\ln d+\frac{3\gamma^2\sigma^2}{2 M}.
\label{to start with 3}
 \end{aligned}
 \]
Applying the above inequality recursively for all $k=1,...,K$, and summing them up, we obtain
\[  
\begin{aligned}
&   \sum_{k=1}^K\left[\frac{1}{2}\E_{\mathcal W}\left[\Vert \xbf^{k+1}-\xbf^*\Vert^2\right]- \frac{1}{2}\E_{\mathcal W}\left[\Vert \xbf^k-\xbf^*\Vert^2\right]\right]\\
\leq\,&  \sum_{k=1}^K\left(\gamma-3\gamma^2L-\frac{2316 LR\gamma^2\ln d}{a\lambda M}\right)\left( F(\xbf^*)-\E_{\mathcal W}[F(\xbf^k)]\right)+ \frac{K\gamma L\delta^2 d}{2}\\ 
&  +K\gamma \lambda\Vert\xbf^*\Vert_1 + \frac{3K\gamma^2}{2a\lambda} {L^2\delta^2} d^2 R +\frac{1158 R \gamma^2\sigma^2K}{a\lambda M}\ln d+\frac{3\gamma^2\sigma^2K}{2 M}.
\end{aligned}
\]
 By some simplification and the definition of $\mathbf E$, which is the expectation over all the random variables in Algorithm \ref{main-alg}, we have

\begin{align}
&  \frac{1}{2}\E_{\mathcal W}\left[\Vert \xbf^{K+1}-\xbf^*\Vert^2\right]- \frac{1}{2}\E_{\mathcal W}\left[\Vert \xbf^1-\xbf^*\Vert^2\right]\\
 &   \leq\,K\left(\gamma-3\gamma^2L-\frac{2316 L R\gamma^2\ln d}{a\lambda M}\right)\left( F(\xbf^*)-\mathbf E[F(\xbf^Y)]\right) +K\gamma \lambda\Vert\xbf^*\Vert_1\nonumber
\\  &  \quad  +\frac{3K\gamma^2}{2 a\lambda}  {L^2\delta^2} d^2 R  +  \frac{K\gamma L \delta^2\nonumber d}{2}+\frac{1158 R \gamma^2\sigma^2K}{a\lambda M}\ln d+\frac{3\gamma^2\sigma^2K}{2 M}.
\end{align}
 By properly choosing parameters, to be elaborated later, we can ensure that $\alpha:=\gamma-3\gamma^2L-\frac{2316\cdot R\cdot \gamma^2\cdot \ln d}{a\lambda\cdot M}>0$. After some rearrangement, we obtain
\begin{multline}
 \mathbf E\left[ F(\xbf^Y)-F(\xbf^*)\right]
\leq\,  \frac{\E_{\mathcal W}\left[\Vert \xbf^1-\xbf^*\Vert^2\right]}{2 K\alpha}+\frac{\lambda\gamma\Vert\xbf^*\Vert_1}{\alpha}
\\+\frac{3\gamma^2}{2\alpha a\lambda} {L^2\delta^2} d^2 R+ 
\frac{\gamma L \delta^2 d}{2\alpha}   + \frac{1158 R \gamma^2\sigma^2}{a\alpha\lambda M}\ln d+\frac{3\gamma^2\sigma^2}{2\alpha M}.\label{final theorem 1}
\end{multline}
Let  $\lambda= \frac{200 L }{K \varpi  }$, $\gamma=\frac{1}{50L}$, $a=\frac{\gamma}{2}=\frac{1}{100L}$,   $M=\left\lceil\frac{50 K^2 \varpi \max\{1,\,\sigma^2\}}{L^2}\ln d\right\rceil$, and $K\geq   L^2 R$. Thus, $1-3\gamma L-\frac{2316 L R\gamma \ln d}{a\lambda M}\geq 1-\frac{3}{50}-\frac{2316}{5000} = 0.4768$ and $\alpha=\gamma-3\gamma^2 L-\frac{96 R\gamma^2 L }{a\lambda M}\geq \frac{1}{105L}$. 
We obtain the desired result by plugging the above into \eqref{final theorem 1}, while recalling that $\delta\leq \frac{\theta}{Kd }$ and $\xbf^1$ is deterministic.\qed
\end{proof}

 \begin{remark}We would like to make a few remarks on the above result.
\begin{itemize}
\item The  algorithm parameters can be chosen with more flexibility to achieve the promised query complexity than what is given in \eqref{parameter settings}.  In fact, if we set any of $\lambda$, $a$, $\gamma$, $\delta$, or $M$ to be some constant multiple of the current value, the same complexity rate can be achieved.
    \item To obtain an $\epsilon$-suboptimal solution, \eqref{error bound in convex} indicates an iteration complexity of
$\mathcal O\left( \frac{  (D_0+R)L}{\epsilon}\right),$ where $D_0:=\Vert\xbf^1-\xbf^*\Vert^2$.
  Since each  per-iteration subroutine invokes   $ \mathcal O\left( \frac{(D_0+  R)^2\sigma^2\cdot \ln d}{\epsilon^2}\right)$-many  queries of the stochastic zeroth-order oracle,
 the total number of calls to the  oracle is 
  \begin{align}  \mathcal O\left( \frac{ \left(D_0 +  R\right)^3L\sigma^2\cdot \ln d}{\epsilon^3 }\right),\label{oracle complexity detailed}
  \end{align}
   which is dimension-free up to a logarithmic term, when $R$, $\epsilon$, $L$, and $\sigma$ are fixed.
   
\item By \eqref{oracle complexity detailed}, we know that the proposed SI-SGF algorithm tends to be more effective when  $R$ is small. In particular, when $\xbf^1=\0$ and there exists an $s$-sparse solution (formalized in Assumption \ref{sparsity assumption}) for some $s$ such that $\,1\leq s\ll d$, then \eqref{oracle complexity detailed}  immediately reduces to
  \begin{align}  \mathcal O\left( \frac{s^3L\sigma^2\ln d}{\epsilon^3 }\right).\label{oracle complexity}
  \end{align}
As a benchmark, the iteration complexity of the randomized stochastic gradient free (RSGF) algorithm  for zeroth-order optimization in \cite{ghadimi2013stochastic} is
\begin{align}
  \mathcal O\left(\frac{d  D_0\sigma^2}{\epsilon^2}\right)=\begin{cases}\mathcal O\left(\frac{d^2  \sigma^2}{\epsilon^2}\right)&\text{In general};\\
\mathcal O\left(\frac{sd  \sigma^2}{\epsilon^2}\right)&\text{If solution $\xbf^*$ is $s$-sparse}.
\end{cases}\label{traditional complexity}
\end{align}
Thus, if $d\gg\frac{s^2}{\epsilon}$, the rate in \eqref{oracle complexity} is significantly more appealing than \eqref{traditional complexity}.
\item As  in Table \ref{table one}, compared to the state-of-the-art algorithm for high-dimensional S-ZOO in \cite{wang2018stochastic,cai2020zeroth,balasubramanian2018zeroth,balasubramanian2018zeroth_neurips},  the proposed algorithm does not rely on any assumption of sparse gradient, compressible gradient, or additive randomness. Instead, we only require that the optimal solution is (approximately) sparse; that is $\Vert \xbf^*\Vert_1\leq R$ for some small $R$, and $R$ indeed can be small when $\xbf^*$ is sparse. To our knowledge, Theorem \ref{main theorem} is the first result for dimension-insensitive S-ZOO under the relatively weak assumption of (weakly) sparse optimal solution.
\item

Under the same  assumptions and  parameter settings as in Theorem \ref{main theorem}, by Markov's inequality,  we  further obtain that $\forall \epsilon>0$, there exists a constant $C_1>0$ such that
\begin{multline}
  \text{Prob}\left[ F(\xbf^Y) - F(\xbf^*)\leq \varepsilon\right]
   \geq 1- \frac{C_1L  \Vert \xbf^1-\xbf^*\Vert^2 }{K\varepsilon}
   \\-\frac{C_1 LR}{K\varepsilon}\cdot (1+\varpi^{-1}+\varpi\theta^2)-\frac{C_1  L}{K^2\varepsilon}\cdot \left(\frac{\theta^2}{d}+\frac{\varpi^{-1}}{\ln d}\right).\label{markov bound}
\end{multline}

\item The implementation of the algorithm does not rely on the knowledge of the true sparsity-level $s$ of an optimal solution. Instead, an over-estimate of its $\ell_1$-norm will suffice to set the hyper-parameters of Algorithm \ref{main-alg}. 
\item Finally, the effectiveness of the proposed algorithm depends on machine precision $\hat\epsilon$, the  relative approximation error due to rounding in floating point arithmetic. In particular, it is implicitly required that, to implement the SI-SGF, the quantity $\delta$ cannot be smaller than $\hat\epsilon$. In other words, it is required that $\frac{\theta}{Kd}\geq\delta> \hat\epsilon$. For the double precision on a 32-bit computer,   $\hat\epsilon=2^{-52}\approx 10^{-16}$, which requires that $Kd< \frac{\theta}{\hat\epsilon}\approx 10^{16}\cdot \theta$. Thus, in spite of the worst-case dimension-insensitive complexity of the proposed SI-SGF, there is an upper limit on the admissible problem dimensionality. This limit is less stringent when the machine precision improves.

\end{itemize}
\end{remark}
%
%
%

\subsection{Complexity of the SI-SGF in solving strongly convex S-ZOO problems}\label{sec: strongly convex}
We now consider  solving a strongly convex S-ZOO problem. The assumptions on the strong convexity of $F(\cdot)$ and the sparsity of an optimal solution are formalized in Assumptions \ref{strongly convex assumption} and \ref{sparsity assumption}, respectively. Before presenting our main result for strongly convex S-ZOO problems, we recall the assumption that $R\geq 1$.  
 
 \begin{theorem}\label{main theorem strongly convex}
Suppose that Assumptions \ref{sample simulation} through \ref{sparsity assumption} hold,
and the hyper-parameters in Algorithm \ref{main-alg} are set as follows:
\begin{equation}
\begin{aligned}
&\gamma_k=\frac{2}{\mu\cdot (k+\left\lceil \frac{100L}{\mu\varpi}\right\rceil+1)}, ~ a_k=\frac{\gamma_{k-1}}{2},~ U_k=a_k\cdot\lambda,~\text{for}~k=1,...,K;
\\ 
&  \delta=\frac{\theta}{ {K^{1.5}   d}},~~ \lambda= \frac{200L}{K\varpi}, ~~M=\left\lceil 50 K^{3} \varpi
\max\{1,\,\sigma^2\}\cdot \mu\cdot L^{-3}\cdot \ln d\right\rceil,
\end{aligned}\label{parameters strongly convex}
\end{equation}
where $ K\geq    \frac{1}{\sqrt{\mu}} L^{3/2} R^{1/2} $ with $\varpi>0$ and $\theta>0$ being some user-specified hyper-parameters.
Then, the output of Algorithm \ref{main-alg} satisfies that
\begin{multline}
   \mathbf E\left[ F(\xbf^Y)- F(\xbf^*)\right]
     \\
 \leq\,   \,      C_2\cdot \frac{L^2}{K^2\mu}\left[s+R \cdot (1+\theta^2\varpi\cdot \mu)+ \Vert \xbf^1-\xbf^*\Vert^2  + \frac{\mu \theta^2}{KLd}+\frac{1}{\varpi^2 K}\right],\label{error bound strongly convex}
\end{multline}
 for some  constant $C_2>0$. 
 \end{theorem}
\begin{proof}
By the same argument as in deriving \eqref{inequality to use direction results}, we obtain
\begin{align}
  &  \nonumber\frac{1}{2}\Vert \xbf^{k+1}-\xbf^*\Vert^2- \frac{1}{2}\Vert \xbf^k-\xbf^*\Vert^2
  \\
  \nonumber \leq\,&   \langle \gamma_k\mathbf g_k^{\delta}(\xbf^k),\,\xbf^*-\xbf^{k}\rangle
+\frac{\eta-1}{2}\Vert\xbf^{k+1}-\xbf^{k}\Vert^2  
\\
&  +\frac{\gamma_k^2}{2\eta} \Vert \mathbf g^{\delta}_{S_k}(\xbf^k)-\mathbf g_{S_k}(\xbf^k)+\mathbf g_{S_k}(\xbf^k)-\nabla_{S_k} F(\xbf^k)+\nabla_{S_k} F(\xbf^k)\Vert^2.\nonumber
\\&+\gamma_k\sum_{i=1}^d\lambda_i\vert x_i^*\vert -\gamma_k\sum_{i=1}^d\lambda_i\vert x_i^{k+1}\vert.  \nonumber
\end{align}
As $0\leq \lambda_i\leq \lambda$ for all $i$, we have $ \gamma_k\sum_{i=1}^d\lambda_i(\vert x_i^*\vert - \vert x_i^{k+1}\vert) \leq \gamma_k \sum_{i:\,x_i^*\neq 0}\lambda_i\vert x_i^{k+1}-x_i^*\vert\leq \gamma_k\lambda\sqrt{s} \Vert \xbf^{k+1}-\xbf^*\Vert.$
Let $S_k:=\{i:\,x_i^k-x_i^{k+1}\neq 0\}$. Then, 
\begin{align}
&  \nonumber\frac{1}{2}\Vert \xbf^{k+1}-\xbf^*\Vert^2- \frac{1}{2}\Vert \xbf^k-\xbf^*\Vert^2
\\
  \nonumber \leq\,&   \langle \gamma_k\mathbf g_k^{\delta}(\xbf^k),\,\xbf^*-\xbf^{k}\rangle
+\frac{\eta-1}{2}\Vert\xbf^{k+1}-\xbf^{k}\Vert^2 
+\gamma_k\lambda\sqrt{s} \Vert \xbf^{k+1}-\xbf^*\Vert \nonumber
\\
+&\frac{3\gamma_k^2}{2\eta} \Vert \mathbf g^{\delta}_{S_k}(\xbf^k)-\mathbf g_{S_k}(\xbf^k)\Vert^2+\frac{3\gamma_k^2}{2\eta}\Vert\mathbf g(\xbf^k)-\nabla F(\xbf^k)\Vert^2 +\frac{3\gamma_k^2}{2\eta}\Vert\nabla F(\xbf^k)\Vert^2.\nonumber
\end{align}
We claim that $|S_k|\leq \frac{2 R}{a_{k+1}\lambda}$. To see this, we observe that $\Vert \xbf^k\Vert_0\leq \frac{R}{a_{k}\lambda}$ and $\Vert \xbf^{k+1}\Vert_0\leq \frac{R}{a_{k+1}\lambda}$. Actually, by Proposition \ref{lemma1},  $\vert x_i^k\vert \geq U_k=a_k\lambda \geq a_{k+1}\lambda= U_{k+1}$ if $x_i^k\neq 0$ (and  $\vert x_i^{k+1}\vert \geq U_{k+1}$ if $x_i^{k+1}\neq 0$), as well as  $\Vert \xbf^k\Vert_1\leq R$ (and $\Vert \xbf^{k+1}\Vert_1\leq R$). Consequently, $\vert S_k\vert\leq \frac{R}{U_k}+\frac{R}{U_{k+1}}\leq  \frac{2 R}{U_{k+1}}= \frac{2 R}{a_{k+1}\lambda}$.
 
Consider Lemma \ref{bounding one particular term}, as in deriving \eqref{second term} (where we let $a$ therein to be $a_{k+1}$). We  then  have $\E_{\mathcal U^k}[\Vert \mathbf g^{\delta}_{S_k}(\xbf^k)-\mathbf g_{S_k}(\xbf^k)\Vert^2]\leq \frac{{L^2\delta^2} d^2 R}{a_{k+1}\lambda}+\frac{772  R\cdot \ln d}{a_{k+1}\lambda}\cdot  \frac{\sigma^2+\Vert\nabla F(\xbf^k)\Vert^2 }{M}$, where $\mathcal U^k:=((\ubf^{k,m},\xi^{k,m}):\,m=1,...,M)$. Meanwhile, observe that   $\E_{\mathcal U^k}\left[\Vert \gbf_{S_k}(\xbf^k)-\nabla_{S_k} F(\xbf^k)\Vert^2\right]\leq \frac{\sigma^2}{M}$. We may then continue  to obtain
\begin{align}
 &  \nonumber\frac{1}{2} \E_{\mathcal U^k}\left[\Vert \xbf^{k+1}-\xbf^*\Vert^2\right]- \frac{1}{2}  \E_{\mathcal U^k}\left[ \Vert \xbf^k-\xbf^*\Vert^2 \right]
 \\
  \nonumber \leq\,&   \E_{\mathcal U^k}\left[\langle \gamma_k\mathbf g_k^{\delta}(\xbf^k),\,\xbf^*-\xbf^{k}\rangle\right]
+\frac{\eta-1}{2}\E_{\mathcal U^k}\left[\Vert\xbf^{k+1}-\xbf^{k}\Vert^2 \right]
\nonumber
\\
&  +\frac{3\gamma_k^2}{2\eta}\left(\frac{{L^2\delta^2} d^2 R}{a_{k+1}\lambda}+\frac{772  R\cdot \ln d}{a_{k+1}\lambda}\cdot  \frac{\sigma^2+\Vert\nabla F(\xbf^k)\Vert^2 }{M}\right)+\frac{3\gamma_k^2\sigma^2}{2\eta M}
\nonumber
\\&  +\frac{3\gamma_k^2}{2\eta}\E_{\mathcal U^k}\left[\Vert\nabla  F(\xbf^k)\Vert^2\right]+\gamma_k\lambda\sqrt{s}\cdot\E_{\mathcal U^k} \Vert \xbf^{k+1}-\xbf^*\Vert \nonumber
 \\ \label{inequality here to use further} \leq\,&   \E_{\mathcal U^k}\left[\langle \gamma_k\mathbf g_k^{\delta}(\xbf^k),\,\xbf^*-\xbf^{k}\rangle\right]
+\frac{\eta-1}{2}\E_{\mathcal U^k}\left[\Vert\xbf^{k+1}-\xbf^{k}\Vert^2 \right]
\\
&+\frac{3\gamma_k^2}{2\eta}\left(\frac{{L^2\delta^2} d^2 R}{a_{k+1}\lambda}+\frac{772  R\cdot \ln d}{a_{k+1}\lambda}\cdot  \frac{\sigma^2 }{M}\right) \nonumber
\\&+\frac{3\gamma_k^2}{2\eta} \left(1+\frac{772R\cdot  \ln d}{a_{k+1} \lambda M}\right)\E_{\mathcal U^k}\left[\Vert\nabla  F(\xbf^k)\Vert^2\right]+\gamma_k\lambda\sqrt{s}\cdot\E_{\mathcal U^k} \Vert \xbf^{k+1}-\xbf^*\Vert.\nonumber
\end{align}

 Similar to \eqref{convex expect here} and in view of Part (b) of Lemma \ref{lemma approx delta} and the strong convexity of $F$ (with modulus $\mu$),  we know that 
 \begin{align}
 &\E_{\left((\xi^{k,m},\ubf^{k,m}):\,m=1,...,M\right)}\left[\langle  \mathbf g_k^{\delta}(\xbf^k),\,\xbf^*-\xbf^{k}\rangle\right]  
\leq  \left\langle   \nabla F^{\delta}(\xbf^k),\,\xbf^*-\xbf^{k}\right\rangle \nonumber
  \\\stackrel{\substack{\text{Strong convexity}\\\text{ and \eqref{test new results here useful}}}}{\leq}& F^\delta(\xbf^*)- F^\delta(\xbf^k) -\frac{\mu}{2} \Vert \xbf^*-\xbf^k\Vert^2 \nonumber
 \\\stackrel{\substack{\text{Lemma \ref{lemma approx delta}}\\
 \text{and \eqref{convexity inequality here}}}}{\leq}~~~&F (\xbf^*)- F (\xbf^k) -\frac{\mu}{2} \Vert \xbf^*-\xbf^k\Vert^2 +\frac{1}{2}\delta^2Ld.\nonumber 
 \end{align}
  Since $F$ is convex and $\nabla F$ is Lipschitz continuous, we have 
$F(\xbf^k)-F(\xbf^*)-\langle \nabla F(\xbf^*),\,\xbf^k-\xbf^*\rangle\geq \frac{1}{2L}\Vert \nabla F(\xbf^*)-\nabla F(\xbf^k)\Vert^2=\frac{1}{2L}\Vert \nabla F(\xbf^k)\Vert^2.
$
 Therefore, we  obtain from \eqref{inequality here to use further} that
\begin{align}
&  \nonumber\frac{1}{2} \E_{\mathcal U^k}\left[\Vert \xbf^{k+1}-\xbf^*\Vert^2\right]- \frac{1}{2}\E_{\mathcal U^k}\left[\Vert \xbf^k-\xbf^*\Vert^2\right]
 \\ \nonumber \leq\,&    -\frac{\gamma_k \mu}{2} \E_{\mathcal U^k}[\Vert \xbf^*-\xbf^k\Vert^2]+\frac{\gamma_kL\delta^2 d  }{2}
+\frac{\eta-1}{2}\E_{\mathcal U^k}\left[\Vert\xbf^{k+1}-\xbf^{k}\Vert^2 \right] \nonumber
\\
&  +\frac{3\gamma_k^2}{2\eta} \left(\frac{{L^2\delta^2} d^2 R}{a_{k+1}\lambda}+\frac{772  R\cdot \ln d}{a_{k+1}\lambda}\cdot  \frac{\sigma^2 }{M}\right) +\gamma_k\lambda\sqrt{s}\cdot \E_{\mathcal U^k} [\Vert \xbf^{k+1}-\xbf^*\Vert] \nonumber
\\&  +\left[\frac{3\gamma_k^2}{\eta} \left(1+\frac{772 R\cdot \ln d}{a_{k+1}\lambda M}\right) L-\gamma_k\right]\cdot \E_{\mathcal U^k}\left[F(\xbf^k)-F(\xbf^*)\right].
\nonumber
\end{align}
By setting $\eta=1/2$, we reduce the above inequality to
\[  
\begin{aligned}
&  \nonumber\frac{1}{2} \E_{\mathcal U^k}\left[\Vert \xbf^{k+1}-\xbf^*\Vert^2\right]- \frac{1}{2}  \Vert \xbf^k-\xbf^*\Vert^2 
\\ \nonumber \leq\,&    -\frac{\gamma_k\mu}{2} \Vert \xbf^*-\xbf^k\Vert^2 +\frac{\gamma_kL\delta^2 d  }{2}
-\frac{1}{4}\E_{\mathcal U^k}\left[\Vert\xbf^{k+1}-\xbf^{k}\Vert^2 \right]
\\
&  + 3\gamma_k^2\left(\frac{{L^2\delta^2} d^2 R}{a_{k+1}\lambda}+\frac{772  R\cdot \ln d}{a_{k+1}\lambda}\cdot  \frac{\sigma^2 }{M}\right) +\gamma_k\lambda \sqrt{s}  \E_{\mathcal U^k}\left[\Vert \xbf^*-\xbf^{k+1}\Vert\right] \nonumber
\\&  +\left(6 \gamma_k^2  L+\frac{4632\gamma_k^2LR}{a_{k+1}\lambda M}-  \gamma_k\right) \cdot  \left[ F(\xbf^k)- F(\xbf^*)\right] \nonumber
 \\ \nonumber \leq\,&    -\frac{\gamma_k\mu}{2} \Vert \xbf^*-\xbf^k\Vert^2 +\frac{\gamma_kL\delta^2 d  }{2}
-\frac{1}{4}\E_{\mathcal U^k}\left[\left(\Vert\xbf^{k+1}-\xbf^{k}\Vert-2\gamma_k\lambda\sqrt{s}\right)^2 \right]
\\
&   -\gamma_k \lambda\sqrt{s}\cdot  \E_{\mathcal U^k}\left[\Vert\xbf^k-\xbf^{k+1}\Vert\right]+\gamma_k^2 \lambda^2s+\gamma_k \lambda\sqrt{s}\cdot \E_{\mathcal U^k}\left[ \Vert\xbf^*-\xbf^{k+1}\Vert\right]\nonumber
\\
&  + 3\gamma_k^2\left[ \frac{{L^2\delta^2} d^2 R}{a_{k+1}\lambda }+\frac{772  R\cdot \ln d}{a_{k+1}\lambda}\cdot  \frac{\sigma^2 }{M} \right] \nonumber
\\&+ \left(6 \gamma_k^2  L+\frac{4632 \gamma_k^2LR}{a_{k+1}\lambda M}- \gamma_k\right)  \cdot \left[  F(\xbf^k)- F(\xbf^*) \right].\nonumber
\end{aligned}
\]
By  strong convexity,  we have $ \Vert\xbf^*-\xbf^{k}\Vert\leq \sqrt{\frac{2}{\mu} \left[F(\xbf^k)-F(\xbf^*)\right]}$. In view of $\E_{\mathcal U^k}\left[ \Vert\xbf^*-\xbf^{k+1}\Vert\right] - \E_{\mathcal U^k}\left[\Vert\xbf^k-\xbf^{k+1}\Vert\right] \leq  \E_{\mathcal U^k}\left[  \Vert\xbf^{k} -\xbf^*\Vert\right]$, thus
\begin{multline}
   \nonumber\frac{1}{2} \E_{\mathcal U^k}\left[\Vert \xbf^{k+1}-\xbf^*\Vert^2\right]- \frac{1}{2} \Vert \xbf^k-\xbf^*\Vert^2 
  \leq\,  -\frac{\gamma_k\mu}{2}\cdot \Vert \xbf^*-\xbf^k\Vert^2 +\frac{\gamma_kL\delta^2 d  }{2}
 \\+\gamma_k \lambda\sqrt{s}\cdot   \left[\sqrt{\frac{2}{\mu} \left[F(\xbf^k)-F(\xbf^*)\right]}\right]+ 3\gamma_k^2\left(\frac{{L^2\delta^2} d^2R}{a_{k+1}\lambda}+\frac{772  R\cdot \ln d}{a_{k+1}\lambda}\cdot  \frac{\sigma^2 }{M}\right) 
 \\+\left(6 \gamma_k^2  L+\frac{4632 \gamma_k^2LR}{a_{k+1}\lambda M}- \gamma_k\right)  \cdot  \left[ F(\xbf^k)- F(\xbf^*)\right]+\gamma_k^2 \lambda^2s.
\end{multline}
Multiplying both sides by $\frac{k+\left\lceil \frac{100L}{\mu\varpi}\right\rceil}{\gamma_k}$ and taking expectation with respect to $\mathcal W:=\left((\xi^{k,m},\ubf^{k,m}):\,k=1,...,K,\,m=1,...,M\right)$, we have
\begin{align}
&  \frac{k+\left\lceil \frac{100L}{\mu\varpi}\right\rceil}{2\gamma_k} \E_{\mathcal W}\left[\Vert \xbf^{k+1}-\xbf^*\Vert^2\right]- \frac{\left(k+\left\lceil \frac{100L}{\mu\varpi}\right\rceil\right) (1-\gamma_k \mu)}{2\gamma_k}\E_{\mathcal W}\left[\Vert \xbf^k-\xbf^*\Vert^2\right]\nonumber
\\ \nonumber &\leq\,    \,  
\left(k+\left\lceil \frac{100L}{\mu\varpi}\right\rceil\right)
\left[\frac{ L\delta^2 d}{2}
+\gamma_k \lambda^2 s+ \gamma_k\left(\frac{{3L^2\delta^2} d^2 R}{a_{k+1}\lambda}+\frac{2316  R\cdot \ln d}{a_{k+1}\lambda}\cdot  \frac{\sigma^2 }{M}\right)\right]
\\\nonumber&    +
\left(k+\left\lceil \frac{100L}{\mu\varpi}\right\rceil\right) \left(6 \gamma_k  L+\frac{4632 \gamma_kLR}{a_{k+1}\lambda M}- 1\right)   \E_{\mathcal W}\left[ F(\xbf^k)- F(\xbf^*)\right]
\\
&   + \left(k+\left\lceil \frac{100L}{\mu\varpi}\right\rceil\right)\cdot \lambda\sqrt{s}\cdot\E_{\mathcal W}\left(  \sqrt{\frac{2}{\mu} \left[F(\xbf^k)-F(\xbf^*)\right]}\right).\label{to invoke recursively}
\end{align}

Note that $\frac{(k+\left\lceil \frac{100L}{\mu\varpi}\right\rceil)(1-\gamma_{k}\mu)}{2\gamma_{k}}=\mu\cdot \frac{(k+\left\lceil \frac{100L}{\mu\varpi}\right\rceil)(k+\left\lceil \frac{100L}{\mu\varpi}\right\rceil-1)}{4} = \frac{k-1+\left\lceil \frac{100L}{\mu\varpi}\right\rceil}{2\gamma_{k-1}}$ 
and $(k+\lceil\frac{100L}{\mu}\rceil)\gamma_k\leq \frac{2}{\mu}$ by the selection of $\gamma_k$ as in \eqref{parameters strongly convex}. Therefore, we may invoke \eqref{to invoke recursively} recursively and sum them up to obtain
\begin{align}
&  \nonumber \frac{\mu\left(K+\left\lceil \frac{100L}{\mu\varpi}\right\rceil\right) }{4} \left(K+\left\lceil \frac{100L}{\mu\varpi}\right\rceil+1\right)\cdot \E_{\mathcal W}\left[\Vert \xbf^{K+1}-\xbf^*\Vert^2\right]\nonumber \\&  -\frac{\mu}{4} \left\lceil \frac{100L}{\mu\varpi}\right\rceil  \cdot \left(\left\lceil \frac{100L}{\mu\varpi}\right\rceil+1\right)\E_{\mathcal W}\left[\Vert \xbf^1-\xbf^*\Vert^2\right]\nonumber
\\&  \nonumber-\sum_{k=1}^K \left(k+\left\lceil \frac{100L}{\mu\varpi}\right\rceil\right)\cdot \left(6 \gamma_kL+\frac{4632 \gamma_k LR}{a_{k+1}\lambda M}-  1\right)   \cdot \E_{\mathcal W}\left[ F(\xbf^k)- F(\xbf^*)\right]
\\ \nonumber \leq\,&    \,\sum_{k=1}^K \left(k+\left\lceil \frac{100L}{\mu\varpi}\right\rceil\right)\left\{\frac{ L\delta^2 d}{2}
+ \lambda\sqrt{s}  \cdot \E_{\mathcal W}\left[\sqrt{\frac{2}{\mu} \left[F(\xbf^k)-F(\xbf^*)\right]}\right]\right\}
\\&  + \sum_{k=1}^K \left[\left(k+\left\lceil \frac{100L}{\mu\varpi}\right\rceil\right)\cdot \left(\frac{{3L^2\delta^2} d^2 R\gamma_k}{a_{k+1}\lambda}+\frac{2316 \gamma_k R\cdot \ln d}{a_{k+1}\lambda}\cdot  \frac{\sigma^2 }{M}\right)\right]+\frac{2K\lambda^2s }{\mu}.\nonumber
\end{align}
Recall that $\lambda= 200\varpi^{-1} K^{-1}L$, and $M=\lceil 50 \varpi K^{3}  \max\{1,\,\sigma^2\}\cdot L^{-3}\mu\cdot \ln d\rceil$, and $K\geq    L^{3/2}R^{1/2}/ \sqrt{\mu}$, then we have $\frac{4632 LR}{\lambda M}\leq \frac{0.47 L^3R}{ K^2\mu}\leq 0.47$. 
Furthermore, $6\gamma_kL\leq 0.12$ by the selection of $\gamma_k$. 
Thus, $6\gamma_k  L+\frac{4632 \gamma_k LR}{a_{k+1}\lambda M}-  1 \leq -0.41$.
Likewise, $\frac{2316 \gamma_k R\sigma^2\cdot \ln d}{a_{k+1}\lambda M}\leq  \frac{0.24L^2R}{K^2\mu}$ and $\frac{3L^2\delta^2Rd^2\gamma_k}{a_{k+1}\lambda}= 0.03\varpi L\delta^2R Kd^2$. 
 Consequently,
\begin{align}
&  \nonumber \frac{\left(K+\left\lceil \frac{100L}{\mu\varpi}\right\rceil\right) \mu}{4} \left(K+\left\lceil \frac{100L}{\mu\varpi}\right\rceil+1\right)\E_{\mathcal W}\left[\Vert \xbf^{K+1}-\xbf^*\Vert^2\right]
\\&  \nonumber-\frac{\mu}{4}  \left\lceil \frac{100L}{\mu\varpi}\right\rceil\left( \left\lceil \frac{100L}{\mu\varpi}\right\rceil+1\right)\E_{\mathcal W}\left[\Vert \xbf^1-\xbf^*\Vert^2\right]
\\&  \nonumber+\sum_{k=1}^K 0.41 \left(k+\left\lceil \frac{100L}{\mu\varpi}\right\rceil\right) \cdot \E_{\mathcal W}\left[ F(\xbf^k)- F(\xbf^*)\right]
\\ \nonumber \leq\,&    \,\sum_{k=1}^K \left(k+\left\lceil \frac{100L}{\mu\varpi}\right\rceil\right) \left\{\frac{ L\delta^2 d}{2}
+ \lambda\sqrt{s} \cdot \E_{\mathcal W}\left[\sqrt{\frac{2}{\mu} \left[F(\xbf^k)-F(\xbf^*)\right]}\right]\right\}\nonumber
\\&  + \sum_{k=1}^K \left(k+\left\lceil \frac{100L}{\mu\varpi}\right\rceil\right)  \left( 0.03\varpi L\delta^2R Kd^2+\frac{0.24L^2R}{K^2\mu}\right) +\frac{4\times10^4\cdot L^2s}{K\mu \varpi^2}.\label{TBC strong convex}
\end{align}
 By the definition of $Y$ and $\mathbf E$, we know that
 \[  
\begin{aligned}
  \mathbf E\left[ F(\xbf^Y)- F(\xbf^*)\right]=&\sum_{k=1}^K\frac{\gamma_{k-1}^{-1}}{\sum_{k=1}^K\gamma_{k-1}^{-1}}\E_{\mathcal W}\left[F(\xbf^k)- F(\xbf^*)\right].\nonumber
\end{aligned}
\]
Similarly,
\[  
\begin{aligned}
  \sqrt{\mathbf E\left[ F(\xbf^Y)- F(\xbf^*)\right]}\geq\,&\mathbf E \sqrt{F(\xbf^Y)- F(\xbf^*)} 
  \\=& \sum_{k=1}^K\frac{\gamma_{k-1}^{-1}}{\sum_{k=1}^K\gamma_{k-1}^{-1}}\E_{\mathcal W}\left[\sqrt{F(\xbf^k)- F(\xbf^*)}\right].\nonumber
\end{aligned}
\]
Recall that  $\gamma_k$ as given in \eqref{parameters strongly convex}, and thus $\sum_{k=1}^K\gamma_{k-1}^{-1}=\frac{K\mu}{2}\left(\left\lceil \frac{100L}{\mu\varpi}\right\rceil+\frac{K+1}{2}\right)$ and $\frac{\gamma_{k-1}^{-1}}{\sum_{k=1}^K\gamma_{k-1}^{-1}}=\frac{k+\left\lceil \frac{100L}{\mu\varpi}\right\rceil}{K\left(\left\lceil \frac{100L}{\mu\varpi}\right\rceil+\frac{K+1}{2}\right)}$. We may then simplify \eqref{TBC strong convex} into 
\[  
\begin{aligned}
    &  \nonumber \frac{\mu\left(K+\left\lceil \frac{100L}{\mu\varpi}\right\rceil\right) }{4} \left(K+\left\lceil \frac{100L}{\mu\varpi}\right\rceil+1\right)\E_{\mathcal W}\left[\Vert \xbf^{K+1}-\xbf^*\Vert^2\right]
\\&    -\frac{\mu}{4}  \left\lceil \frac{100L}{\mu\varpi}\right\rceil \left(\left\lceil \frac{100L}{\mu\varpi}\right\rceil+1\right)\E_{\mathcal W}\left[\Vert \xbf^1-\xbf^*\Vert^2\right]\nonumber
 \\&+ 0.205 K   \left(K+2\left\lceil \frac{100L}{\mu\varpi}\right\rceil+1\right)  \mathbf E\left[ F(\xbf^Y)- F(\xbf^*)\right]
\\ \nonumber \leq\,&    \,  \frac{K}{2}\left(K+1+2\left\lceil \frac{100L}{\mu\varpi}\right\rceil\right) \left\{\frac{L\delta^2 d}{2}
+  \lambda\sqrt{s} \sqrt{\frac{2}{\mu} \mathbf E\left[F(\xbf^Y)-F(\xbf^*)\right]}\right\}\nonumber
\\&    +  \frac{K}{2} \left(K+1+2\left\lceil \frac{100L}{\mu\varpi}\right\rceil\right)\cdot\left( 0.03 \varpi L\delta^2R Kd^2+\frac{0.24 L^2R}{K^2\mu} 
\right) +\frac{4\times10^4\cdot L^2s}{K\mu \varpi^2}.
\end{aligned}
\]
By rearranging the items and plugging in $\lambda=\frac{200 L}{K}$ from \eqref{parameters strongly convex}, we have
\begin{align}
&  \nonumber  0.205K \left(K+2\left\lceil \frac{100L}{\mu\varpi}\right\rceil+1\right)  \mathbf E\left[ F(\xbf^Y)- F(\xbf^*)\right]
\\ \nonumber \leq\,&    \,\frac{\mu}{4}  \left\lceil \frac{100L}{\mu\varpi}\right\rceil  \left(\left\lceil \frac{100L}{\mu\varpi}\right\rceil+1\right)\E_{\mathcal W}\left[\Vert \xbf^1-\xbf^*\Vert^2\right]
\\&+\frac{K}{2} \left(K+1+2\left\lceil \frac{100L}{\mu\varpi}\right\rceil\right) \frac{L\delta^2 d}{2}\nonumber
\\&    \nonumber
+100K\left(K+2\left\lceil \frac{100L}{\mu\varpi}\right\rceil+1\right)\cdot  \frac{L}{K}\cdot \sqrt{\frac{s}{\mu}} \sqrt{2\mathbf E\left[F(\xbf^Y)-F(\xbf^*)\right]}
\\&     + \frac{1}{2}K \left(K+1+2\left\lceil \frac{100L}{\mu\varpi}\right\rceil\right)\cdot\left( 0.03\varpi L\delta^2R Kd^2+\frac{ 0.24L^2R}{K^2\mu}  \right)   +\frac{4\times10^4\cdot L^2s}{K\mu\varpi^2}.\nonumber
\end{align}
Dividing both sides by $\frac{K}{2} \left(K+2\left\lceil \frac{100L}{\mu\varpi}\right\rceil+1\right)$, we then have
\begin{align}
&  \nonumber   0.41 \mathbf E\left[ F(\xbf^Y)- F(\xbf^*)\right]
\\ \nonumber \leq\,&    \,
\frac{200L}{K}  \sqrt{\frac{s}{\mu}} \cdot \sqrt{2\mathbf E\left[F(\xbf^Y)-F(\xbf^*)\right]}+\frac{L \delta^2 d }{2}
\\\nonumber
&  + \frac{1}{K\left(K+2\left\lceil \frac{100L}{\mu\varpi}\right\rceil+1\right)}\cdot\frac{\mu}{2}  \left\lceil \frac{100L}{\mu\varpi}\right\rceil \left(\left\lceil \frac{100L}{\mu\varpi}\right\rceil+1\right)\E_{\mathcal W}\left[\Vert \xbf^1-\xbf^*\Vert^2\right]
\\&    +   0.03\varpi L\delta^2R Kd^2+ \frac{0.24L^2R}{K^2\mu}
      +\frac{8\times 10^4\cdot L^2s}{\mu K^2\varpi^2\left(K+2\left\lceil \frac{100L}{\mu\varpi}\right\rceil+1\right)}.\nonumber
\end{align}
We can view  the above equation as  a quadratic inequality with the unknown variable $\sqrt{\mathbf E[F(\xbf^Y)-F(\xbf^*)]}$. Solving this inequality and after some  simplification, we obtain, for some constant $c_3>0$, $
 \nonumber  \mathbf E[ F(\xbf^Y)- F(\xbf^*)]
 \leq\,    c_3\cdot\left[
\frac{L^2(s+R)}{K^2\mu} +\frac{L^2}{\mu K^2}\cdot\E_{\mathcal W}[\Vert \xbf^1-\xbf^*\Vert^2] 
+ L \delta^2 d+ L\delta^2d^2 RK\varpi+ \frac{L^2s}{\mu \varpi^2 K^3}\right]$. Recall that $R\geq 1$, $\delta \leq \frac{\theta}{ K^{1.5}  d  }$ and $\xbf^1$ is deterministic, then  it directly leads to the desired result.  \qed
\end{proof}
 \begin{remark}
 Below, we would like to make a few remarks on the above theorem. For these remarks, we assume w.l.o.g. that $\mu\leq 1$ and $\sigma\geq 1$.
 \begin{itemize}
 \item 
 Let $D_0:=\Vert \xbf^1-\xbf^*\Vert^2$, which can be $\mathcal O(s)$ under Assumption \ref{sparsity assumption}, if $\Vert \xbf^1\Vert_0\leq s$. (For instance, $\xbf^1$ can be an all-zero vector.) 
 Compared to \eqref{oracle complexity detailed}, For any $\epsilon\in(0,\,L^{-1}]$, Theorem \ref{main theorem strongly convex} implies a significantly sharper query complexity
$
\mathcal O\left(\frac{L  \sigma^2(s+R+D_0)^2}{\mu\epsilon^2}\ln d\right).
$
To see this, note that the iteration complexity is $\mathcal O\left(\sqrt{\frac{L^2}{\mu\epsilon}\cdot(s+R+D_0)}\right)$, and the per-iteration query complexity is $M=\mathcal O\left(\frac{ (s+R+D_0)^{3/2}\sigma^2}{\epsilon^{3/2}\mu^{1/2}}\ln d\right)$. Once again, the query complexity is independent of dimensions $d$, up to a logarithmic term, when $\sigma$, $s$, $L$, and $\mu$ are fixed.
\item By a similar  argument as used in deriving \eqref{markov bound}, there exists some universal constant $C_3>0$, such that
\begin{multline}
    \text{Prob}\left[ F(\xbf^Y)- F(\xbf^*)\leq \varepsilon\right]
    \\\geq      1-C_2\cdot \frac{L^2}{K^2\mu}\left[s+R \cdot (1+\theta^2\varpi\cdot \mu)+ \Vert \xbf^1-\xbf^*\Vert^2  + \frac{\mu \theta^2}{KLd}+\frac{1}{\varpi^2 K}\right],\label{markov bound strongly convex}
\end{multline}
for any $\varepsilon>0$, under the same set of assumptions as in Theorem \ref{main theorem strongly convex}.
\item Similar to the convex S-ZOO case, to implement Algorithm \ref{main-alg} for solving the strongly convex S-ZOO, we do not need to know the sparsity-level $s$ of the optimal solution. Actually,  the algorithm can automatically exploit the sparsity, provided  a coarse over-estimate of $R$,  i.e., the $\ell_1$-norm of the optimal solution, is available.

\item Also similar to the convex S-ZOO case,  the effectiveness of the proposed algorithm depends on machine precision,   $\hat\epsilon$.  For the double precision on a 32-bit computer with   $\hat\epsilon=2^{-52}\approx 10^{-16}$, it is stipulated that $Kd^{1.5}< \frac{\theta}{\hat\epsilon}\approx 10^{16}\cdot \theta$; namely, there can be an upper limit on the admissible problem dimensionality for the proposed SI-SGF.

\item The  algorithm parameters can be more flexible than \eqref{parameters strongly convex}  to achieve the promised query complexity.  In fact, if we choose any of $\lambda$, $a$, $\gamma$, $\delta$, or $M$ to be some constant multiple of their current values, the same complexity rate holds.
\end{itemize}
\end{remark} 
 \subsection{Alternative schemes for algorithm output}\label{sec alternative output}
 Algorithm \ref{main-alg} relies on a simple and randomized criterion to determine the output 
 $\xbf^Y$ from  the sequence $\{\xbf^k\}$, with the index $Y$ randomly chosen as per a pre-defined discrete distribution. We may also use two alternative output schemes (AOS). The first AOS generates $\xbf^{k^*}$ as follows.
     \begin{align} k^*\in \arg\min\left\{M^{-1}\sum_{m=1}^Mf(\xbf^k,\,\xi^{k,m}):\,k=1,...,K\right\}.\label{AOS-2}\end{align}
When the above set is not a singleton,  $k^*$ is selected arbitrarily from the set.

Intuitively, this AOS outputs the solution with the smallest in-sample cost calculated on a mini-batch among all the solutions generated from iterations 1 to $K$.  Our numerical experiments in Section \ref{sec: numerical} show that the AOS tends to yield better solution quality in practice than the default randomized output scheme in  Algorithm \ref{main-alg}. The corollary below
provides a theoretical guarantee of the AOS's effectiveness.

\begin{corollary}\label{first corollary}
Let $\theta$ and $\varpi$ in \eqref{parameter settings} and \eqref{parameters strongly convex} be some universal constants. For any  $\varepsilon>0$, there exists some constant  $C_4>0$ such that 
the following  hold.
\begin{enumerate}
 \item[(a)] Under the same setting as in Theorem \ref{main theorem},  it holds with probability at least $1-\frac{C_3}{\varepsilon}\cdot \left(\frac{L^2}{K}+  \frac{L  \Vert \xbf^1-\xbf^*\Vert^2 }{ K} + \frac{1+ L/K+LR}{ K}\right)$ that 
$F(\xbf^{k^*})-F(\xbf^*)\leq 3\varepsilon$.
 \item[(b)] Under the same setting as in Theorem \ref{main theorem strongly convex},  $F(\xbf^{k^*})-F(\xbf^*)\leq 3\varepsilon$
 with probability at least $1-\frac{C_4}{\varepsilon} \cdot  \frac{L^2}{K^2\mu}\left(L+s+R + \Vert \xbf^1-\xbf^*\Vert^2  \right).$
 \end{enumerate}
\end{corollary}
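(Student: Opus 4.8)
The plan is to reduce Corollary~\ref{first corollary} to Theorems~\ref{main theorem} and~\ref{main theorem strongly convex} by comparing the deterministic rule \eqref{AOS-2} with the randomized output $\xbf^Y$ of Algorithm~\ref{main-alg}, and then absorbing the extra statistical error incurred by using an in-sample surrogate of $F$. Write $\hat F_k:=M^{-1}\sum_{m=1}^M f(\xbf^k,\xi^{k,m})$, so that $k^*\in\arg\min_{k\in[K]}\hat F_k$, and set $e_k:=\hat F_k-F(\xbf^k)$. Two elementary facts drive the argument: (i) $\xbf^k$ depends only on the samples drawn in iterations $1,\dots,k-1$, whereas $\xi^{k,1},\dots,\xi^{k,M}$ are fresh and i.i.d., so $\E[e_k\mid\xbf^k]=0$; and (ii) $\hat F_{k^*}=\min_k\hat F_k\le \hat F_Y$, where $Y$ is the random index of Algorithm~\ref{main-alg} (whose distribution is fixed, hence independent of the sample path).

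First I would split
\begin{equation}\nonumber
F(\xbf^{k^*})-F(\xbf^*)=\underbrace{\bigl[F(\xbf^{k^*})-\hat F_{k^*}\bigr]}_{=-e_{k^*}}+\underbrace{\bigl[\hat F_{k^*}-\hat F_Y\bigr]}_{\le 0}+\underbrace{\bigl[\hat F_Y-F(\xbf^Y)\bigr]}_{=e_Y}+\bigl[F(\xbf^Y)-F(\xbf^*)\bigr].
\end{equation}
Taking expectations, the second term drops by (ii), $\E[e_Y]=0$ by (i) (conditioning on $(\xbf^Y,Y)$), and $\E[F(\xbf^Y)-F(\xbf^*)]$ is exactly the quantity bounded in \eqref{error bound in convex} (resp.\ in Theorem~\ref{main theorem strongly convex}). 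Hence $\E[F(\xbf^{k^*})-F(\xbf^*)]\le \E[-e_{k^*}]+(\text{Theorem bound})$, and everything reduces to bounding $\E[-e_{k^*}]$, the genuinely new term.

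To control it I would use $-e_{k^*}\le\max_{k\in[K]}|e_k|$ and a union bound over iterations: $\E[\max_k|e_k|]\le\sum_{k=1}^K\E|e_k|\le\sum_{k=1}^K\sqrt{\E[e_k^2]}$. Conditionally on $\xbf^k$, $\hat F_k$ is an average of $M$ i.i.d.\ terms, so $\E[e_k^2]\le \tfrac1M\,\sup_{\Vert\xbf\Vert_1\le R}\operatorname{Var}(f(\xbf,\xi))=:V/M$; and $V$ is finite because, writing $f(\xbf,\xi)-F(\xbf)$ through its gradient $\nabla f(\xbf,\xi)-\nabla F(\xbf)$ (whose squared norm has expectation $\le\sigma^2$ by Assumption~\ref{mean and variance}) plus a second-order remainder bounded by $L\Vert\xbf-\xbf^*\Vert^2\le 4LR^2$ (Assumptions~\ref{Lipschitz},~\ref{bounded}), one gets $V\le c(L,\sigma,R)$ for an explicit constant. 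Thus $\E[-e_{k^*}]\le K\sqrt{V/M}$, and one then checks that for the prescribed $M$---which is $\Theta(K^2)$ in \eqref{parameter settings} and $\Theta(K^3)$ in Theorem~\ref{main theorem strongly convex}---this quantity is of the same order as the extra term $\tfrac{L^2}{K}$ (resp.\ $\tfrac{L^3}{8K^2\mu}$) in the statement, so it can be folded into $C_3$. The two high-probability claims then follow by applying Markov's inequality to the resulting bound on $\E[F(\xbf^{k^*})-F(\xbf^*)]$, exactly as \eqref{markov bound} / \eqref{markov bound strongly convex} were obtained from Theorems~\ref{main theorem} / \ref{main theorem strongly convex}.

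The main obstacle is making the control of $\max_k|\hat F_k-F(\xbf^k)|$ sharp enough: a direct Chebyshev bound on the bad event $\{F(\xbf^{k^*})-F(\xbf^*)>\varepsilon\}$ would cost an extra factor $1/\varepsilon$, so one must instead bound the \emph{expectation} of the maximum (crudely, by the sum over the $K$ iterations, possibly refined by a Bernstein-type estimate) and invoke Markov only once at the end; obtaining the stated $K$-dependence hinges on the large prescribed $M$ doing double duty---variance reduction for the gradient estimate \emph{and} for the in-sample cost. A subsidiary but essential point is the independence bookkeeping behind~(i): the mini-batch $\boldsymbol\xi^k$ enters both $\mathbf g^{\delta}(\xbf^k)$ and $\hat F_k$, so one must confirm it is drawn independently of $\xbf^k$ for $\hat F_k$ to be conditionally unbiased for $F(\xbf^k)$, and that $Y$ is independent of the sample path.
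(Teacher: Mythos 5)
Your decomposition is essentially the paper's: both proofs hinge on $\hat F_{k^*}\le\hat F_Y$ and reduce everything to controlling $\max_{k\in[K]}\bigl|F(\xbf^k)-M^{-1}\sum_m f(\xbf^k,\xi^{k,m})\bigr|$, then combine with the Markov bounds \eqref{markov bound}/\eqref{markov bound strongly convex}. The gap is in your final quantitative step. You bound $\E[\max_k|e_k|]\le\sum_{k=1}^K\sqrt{\E[e_k^2]}\le K\sqrt{V/M}$ and assert that, for the prescribed $M$, this matches the extra terms $L^2/K$ (resp.\ $L^3/(8K^2\mu)$). It does not: with $M=\Theta(K^2\max\{1,\sigma^2\}/L^2)$ you get $K\sqrt{V/M}=\Theta\bigl(L\sqrt{V}/\max\{1,\sigma\}\bigr)$, a quantity that does \emph{not} decay in $K$ at all, and in the strongly convex case $M=\Theta(K^3)$ gives $\Theta(K^{-1/2})$, still far from $K^{-2}$. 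So the new error term cannot be folded into $C_3$, and the stated probability bounds do not follow from your route. The paper instead stays at the event level: it applies Markov's inequality to each $|e_k|$ separately at threshold $\varepsilon$ using a \emph{first-moment} estimate $\E|e_k|\le\sigma^2/M$, and union-bounds the $K$ events to get an extra failure probability $K\sigma^2/(M\varepsilon)=O(L^2/(K\varepsilon))$, which is exactly the $\frac{C_3}{\varepsilon}\cdot\frac{L^2}{K}$ term. Your worry that the event-level route "costs an extra factor $1/\varepsilon$" applies to Chebyshev (a second-moment bound), not to Markov on $|e_k|$; the expectation-of-the-max detour is precisely what loses the $K$-dependence, because with only variance information the best per-iteration deviation rate is $\sqrt{V/M}$, not $V/M$.

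A secondary point worth flagging in your own sketch: your bound $V=\sup_{\Vert\xbf\Vert_1\le R}\operatorname{Var}(f(\xbf,\xi))\le c(L,\sigma,R)$ does not follow from Assumptions \ref{mean and variance}--\ref{bounded}. Writing $f(\xbf,\xi)-F(\xbf)$ "through its gradient" only controls the variance of \emph{differences} $f(\xbf,\xi)-f(\xbf^*,\xi)$; the base-point fluctuation $f(\xbf^*,\xi)-F(\xbf^*)$ has no variance bound in the stated assumptions (only the gradient noise is controlled by $\sigma^2$), so $V$ could be infinite and $\hat F_k$ need not concentrate. The paper's proof glosses over the same issue by asserting $\E|e_k|=\sigma^2/M$ without justification, but since you made the claim explicit, the justification you give does not close it.
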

\begin{proof} Because
$M^{-1}\sum_{m=1}^Mf(\xbf^{k^*},\xi^{k^*,m})
 \leq M^{-1}\sum_{m=1}^Mf(\xbf^{Y},\xi^{Y,m})$, we have
 \begin{align}
 & F(\xbf^{k^*})-F(\xbf^{Y})\nonumber
 \\  
 \leq~&    F(\xbf^{k^*})-F(\xbf^{Y})+M^{-1}\sum_{m=1}^Mf(\xbf^{Y},\xi^{Y,m})-M^{-1}\sum_{m=1}^Mf(\xbf^{k^*},\xi^{k^*,m})\nonumber
 \\  
  \leq~&    \sqrt{\left(F(\xbf^{k^*})- \sum_{m=1}^M\frac{f(\xbf^{k^*},\xi^{k^*,m})}{M}\right)^2}+\sqrt{\left(F(\xbf^{Y})- \sum_{m=1}^M\frac{f(\xbf^{Y},\xi^{Y,m})}{M}\right)^2}\nonumber
  \\   \leq~&    2\max_{k=1,...,K}\sqrt{\left(F(\xbf^{k})-M^{-1}\sum_{m=1}^Mf(\xbf^{k},\xi^{k,m})\right)^2}.\label{to use corollary 1}
 \end{align}
Under Assumption \ref{mean and variance},
 $\E_{(\xi^{k,m}:\,m=1,...,K)}[\sqrt{(F(\xbf^{k})-M^{-1}\sum_{m=1}^Mf(\xbf^{k},\xi^{k,m}))^2}]\leq \frac{\sigma^2}{M}$, for  $k=1,...,K$.
 By Markov's inequality, for any $\varepsilon>0$, it holds with probability at least $1-\frac{\sigma^2}{M \varepsilon}$ that $\sqrt{\left(F(\xbf^{k})-M^{-1}\sum_{m=1}^Mf(\xbf^{k},\xi^{k,m})\right)^2}\leq \varepsilon$.  Furthermore, by union bound and De Morgan's law, we then have
$ \max_{k=1,...,K} \sqrt{\left(F(\xbf^{k})-M^{-1}\sum_{m=1}^Mf(\xbf^{k},\xi^{k,m})\right)^2}\leq \varepsilon$
 with probability at least $1-\frac{K\sigma^2}{M\varepsilon}$. This combined with \eqref{to use corollary 1} implies that
$F(\xbf^{k^*})-F(\xbf^{Y}) \leq  2\varepsilon$
 with probability at least $1- \frac{K\sigma^2}{M\varepsilon}$. In view of \eqref{markov bound} and \eqref{markov bound strongly convex} as well as the choices of $M$ in  \eqref{parameter settings} and \eqref{parameters strongly convex}, we then have the desired results in (a) and (b), respectively.\qed
\end{proof}
  From this corollary, we know that the AOS is provably effective. It is also worth noticing that adopting this output  scheme   would incur almost no additional computational cost in Algorithm \ref{main-alg}. Our numerical results presented subsequently indicate the effectiveness compared with the  output scheme originally in Algorithm \ref{main-alg}.

 The second AOS is commonly utilized in the literature \cite{nemirovski2009robust}. At the end of iteration $K$, Algorithm \ref{main-alg} yields $\bar\xbf$, which is the weighted average of  the whole solution sequence calculated as
 \begin{align}
 \bar\xbf :=\sum_{k=1}^K\frac{\gamma_{k-1}^{-1}\cdot \xbf^k}{\sum_{k=1}^K\gamma_{k-1}^{-1}}.\label{second AOS}
 \end{align}
To understand the effectiveness of this AOS, we observe that  $\mathbb E_Y[\xbf^Y]= \bar\xbf$, where $Y$ and $\xbf^Y$ are defined as in   Theorems \ref{main theorem} and \ref{main theorem strongly convex} and $\mathbb E_Y$ denotes the expectation taken over $Y$.  Since $\mathbb F$ is convex, the above  leads to $\mathbb E_Y\left[\mathbb F(\xbf^Y)\right]\geq \mathbb F\left(\mathbb E_Y[\xbf^Y]\right)=\mathbb F(\bar\xbf)$.  This, combined with Theorems \ref{main theorem} and \ref{main theorem strongly convex},  immediately shows the effectiveness of the AOS in \eqref{second AOS}.

\section{Numerical Results}\label{sec: numerical} 
 We conducted   preliminary experiments on a stochastic quadratic programming problem modified from \cite{nesterov2017random}. More specifically, we focused on solving the following optimization problem:
\begin{multline}
    f_d\left(\xbf;(\omega_i,\upsilon_i)\right)=\frac{1}{2} x_1^2+\sum_{i=1}^{d-1}\frac{1}{2}(x_{i+1}-x_i-C_{i+1}+C_i)^2
    \\+\frac{1}{2}x^2_d +\sum_{i=1}^d \omega_i
   \cdot \upsilon_i\cdot x_i,\label{population level LP}
\end{multline}
where   $x_i$ is the $i$-th entry of $\xbf$, $C_i=1.5$ for all $i\in\{2, 6, 9\}$, and $C_i=0$ for all other $i$ (that is, $i\notin\{2,\,6,\,9\}$), for each $i$, $\omega_i$ is a standard normal random variable, and $\vbf:=\{\upsilon_i\}$ is a vector of   random variables such that exactly three components of it take value 1. That means, each $\vbf$ is randomly drawn from  $\mathcal{V}:=\{\upsilon_i\in \{0,1\}: \sum_{i=1}^{d}{\upsilon_i}=3\}$ with equal probability of $\frac{1}{|\mathcal{V}|}$. By  construction, the optimal solution of the problem is verifiably $\xbf^*=[0;\,1.5;\,0;\,0;\,0;\,1.5;\,0;\,0;\,1.5;\,0;\,...;\,0]$, which is indeed a sparse solution,  the corresponding optimal objective value is $0$, and $\sigma^2=3$.  The suboptimality gap, in this case, is the objective value of the output solution.

 In all our experiments, we set the budget of the maximum number of zeroth-order oracle calls to be  320,000.   We tested different mini-batch sizes $M$ for SI-SGF under both convex and strongly convex settings in different problem cases. Correspondingly,  the maximum iteration count was
 \begin{align}K=\left\lfloor\frac{320,000}{M}\right\rfloor.\label{K-M relationship}
 \end{align}



We experimented with the following algorithms and configurations:
\begin{itemize}
\item SI-SGF for the convex settings with $\varpi=5$, $(\lambda,\gamma,a)$ as per \eqref{parameter settings} in Theorem \ref{main theorem}, and $M$ determined empirically in the sequel:
\begin{description}
    \item[~\it si-sgf$^{R}$:]   SI-SGF with randomized output scheme as in Algorithm \ref{main-alg};
    
    \item[~\it si-sgf$^{*}$:]  SI-SGF with output scheme as in \eqref{AOS-2};
    \item[~\it si-sgf$^{A}$:]  SI-SGF with output scheme as in \eqref{second AOS};
     \end{description}
     \item SI-SGF for the strongly convex settings with   $\varpi=5$, $(\lambda,\gamma_k,\,a_k)$   selected as per \eqref{parameters strongly convex} in  Theorem \ref{main theorem strongly convex}, and $M$ determined empirically below:
     \begin{description}
     \item[~\it si-sgfs$^{R}$:]   SI-SGF with randomized output scheme as in Algorithm \ref{main-alg};
   
    \item[~\it si-sgfs$^{*}$:]  SI-SGF with output scheme as in \eqref{AOS-2};
     \item[~\it si-sgfs$^{A}$:]  SI-SGF with  output scheme as in \eqref{second AOS};
    \end{description}

    \item Benchmark algorthm:
    
    \begin{description}
\item[~\it sgf:]  The SGF from \cite{ghadimi2013stochastic} with the best combination of the output scheme and the mini-batch size. More precisely, for each problem instance, all combinations of the three aforementioned output schemes and the candidate mini-batch sizes  (to be detailed below) were compared. Among them, the  one with the  best quality  in terms of the expected cost function $F$  was selected as the {\it sgf}'s output solution.  The step sizes for SGF with $M=1$ were chosen as per Corollary 3.3 of \cite{ghadimi2013stochastic} with $\bar D=1.5$ therein. Denote this value by  $\gamma_{SGF}$. Then, for SGF with other mini-batch sizes $M$, the step sizes were selected to be $\gamma_{SGF}\cdot M$.

%

\end{description}
\end{itemize}
All the algorithms above were initialized with an all-zero vector, whose objective value was $6.75$.

Note that  the alternative dimension-insensitive S-ZOO algorithms by \cite{cai2020zeroth} and \cite{balasubramanian2018zeroth_neurips} cannot be applied to our settings directly, because the problem considered here does not satisfy the additive structure or the assumption of everywhere sparse  gradient.

The first experiment was to determine the mini-batch size $M$ for SI-SGF and understand how  the performance of the algorithms above changes as the  mini-batch size $M$ varies. To this end, we fixed $d$ and $\delta$ to be $2^{15}$ and $10^{-7}$, respectively. This  dimensionality was intentionally chosen to be larger than a tenth of the total budget of calls to the zeroth order oracle. For each choice of $M$, we performed ten random replications.  Mean values and standard deviations of the suboptimality gaps are shown in both Figure \ref{fig: subopt vs M}.(a) and Table \ref{tab: increase M} (in the supplemental material). As shown therein,   `{\it si-sgf$^*$}', `{\it si-sgfs$^*$}', and `{\it si-sgfs$^A$}' were relatively insensitive to different choices of   $M$, especially when $M\geq 100$. Some deterioration in the performance of these three variants has been observed for scenarios with larger values of $M$. Recall that $K$, the maximum iteration number, was decreasing in $M$ as per \eqref{K-M relationship} to maintain the same maximum number of queries. Thus, the above observed deterioration was believed to be  the  result of smaller values of $K$. Other variants of the SI-SGF were comparatively more sensitive to the changes in $M$.  Yet, in almost all test cases, all variants of SI-SGF outperformed the benchmark `{\it sgf}'.   We further evaluated the average   suboptimality gaps across all three output schemes for the SI-SGF, and picked the mini-batch sizes that led to the best performance.   As in Figure \ref{fig: subopt vs M}.(b) and Table \ref{tab: increase M}, the  best mini-batch sizes in this test  were 160 and 280, respectively, for SI-SGF under convex and strongly convex settings. 
 
\begin{figure}
\begin{tabular}{cc}
\includegraphics[width=0.51\textwidth]{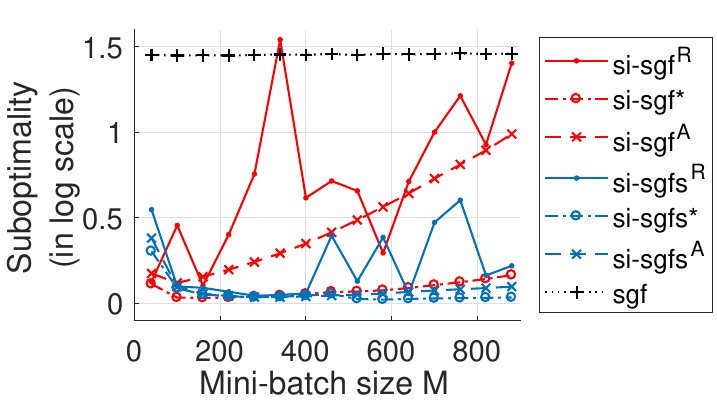} &
\includegraphics[width=0.46\textwidth]{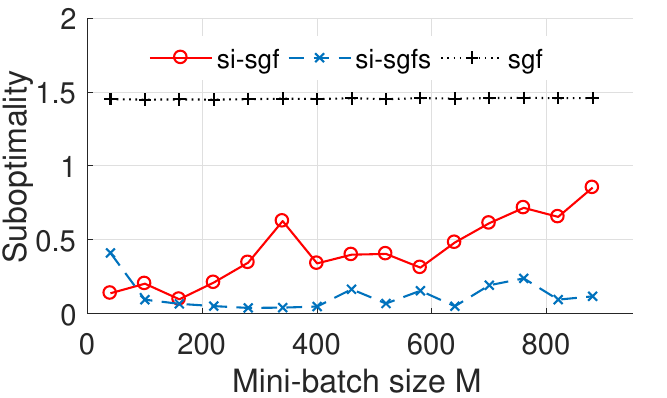}
\\(a)&(b)
\end{tabular}
\caption{\label{fig: subopt vs M} Comparisons in the average  suboptimality gaps (over ten random replications) of all the algorithms  for different mini-batch sizes $M$. 
Subplot (a)  shows the mean suboptimality gaps of each of the algorithms. Subplot  (b)  shows the comparisons among  `{\it si-sgf}', `{\it si-sgfs}', and `{\it sgf}'. Here,  `{\it si-sgf} ' refers to the average of the mean suboptimality gaps generated by `{\it si-sgf$^R$}', `{\it si-sgf$^*$}', and `{\it si-sgf$^A$}'; `{\it si-sgfs}' refer to the average of the mean suboptimality gaps generated by `{\it si-sgfs$^R$}', `{\it si-sgfs$^*$}', and `{\it si-sgfs$^A$}'.}
\end{figure}

 The second set of experiments was to test the  algorithms when the dimensionality $d$ belonged to $\{
2^k: 6\leq k \leq 21\}$. We set $\delta=10^{-7}$, and the mini-batch sizes were 160 and 280 (as selected above) for the SI-SGF in convex and strongly convex settings, respectively. For each case, the benchmark `{\it sgf} ' reported in this test was the best suboptimality gap achieved by the SGF's output among all combinations of the three different output schemes and the mini-batch sizes of 1, 160, and 280. For each case, five random replications were performed.  Mean values and standard deviations of the resulting suboptimality gaps out of these replications  are reported in both Table \ref{tab: increase d} (in the supplemental material of this paper) and  in Figure \ref{fig: subopt vs dim}.  It can be seen from subplots (a)-(c) of this figure, as the dimensionality $d$ increased exponentially above $10^{14}$, the performance of the benchmark `{\it sgf} ' deteriorated rapidly and then plateaued as the suboptimality gap got closer to 6.75, which is the objective value of the initial solution. In contrast, the proposed SI-SGF under both convex and strongly convex settings for all three output schemes was significantly insensitive to the increase of dimensions. These observations agreed with our theoretical results that the SI-SGF, under both convex and strongly convex settings, are provably dimension-insensitive. 

 Figure \ref{fig: subopt vs dim}.(d). compares different variants of SI-SGF. Each curve therein shows the ``ratio of gaps'', that is, the ratio of the suboptimality gaps incurred by an SI-SGF variant of interest to that of `{\it si-sgfs}$^*$' when $d$ increased.  If any point is above  the  line of $y=1$, then that SI-SGF variant performed worse than `{\it si-sgfs}$^*$' in the corresponding case of $d$. As can be seen from Subplot (d), `{\it si-sgfs}$^*$' yielded the best overall performance among all the variants. Both `{\it si-sgf}$^*$' and `{\it si-sgfs}$^A$'  were competitive against `{\it si-sgfs}$^*$', yet `{\it si-sgfs}$^*$' was noticeably better when $d\geq 2^{19}$. The rest of the variants were non-trivially less competitive in almost all the cases of $d$.

\begin{figure}
\begin{tabular}{cc}
\includegraphics[width=0.48\textwidth]{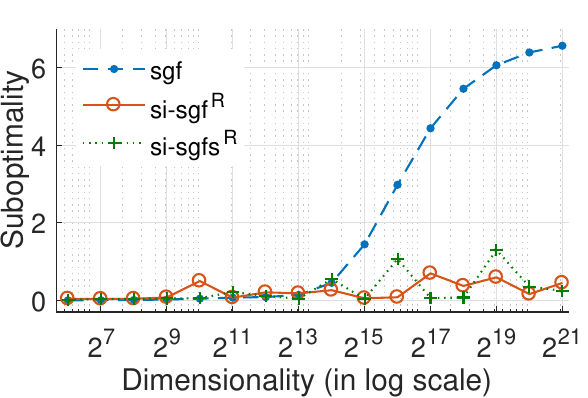} &
\includegraphics[width=0.48\textwidth]{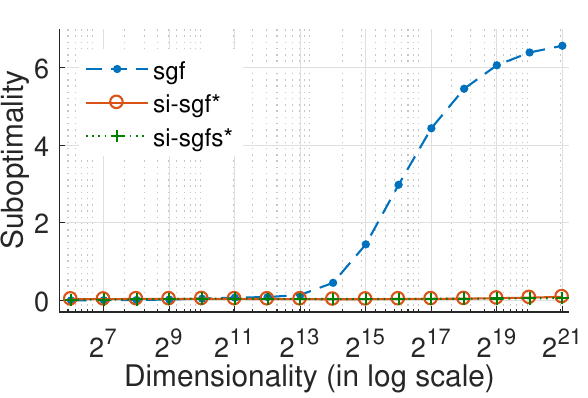}
\\(a)&(b)
\\
\includegraphics[width=0.48\textwidth]{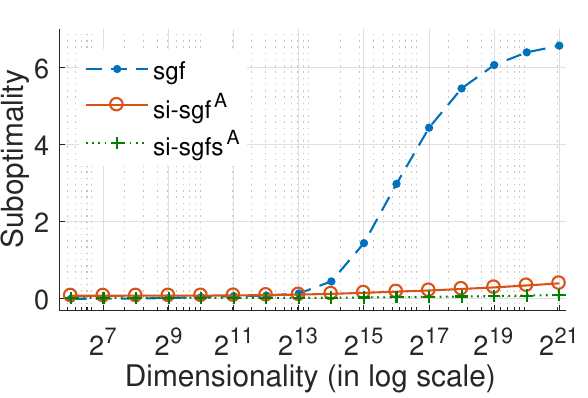}&
\includegraphics[width=0.48\textwidth]{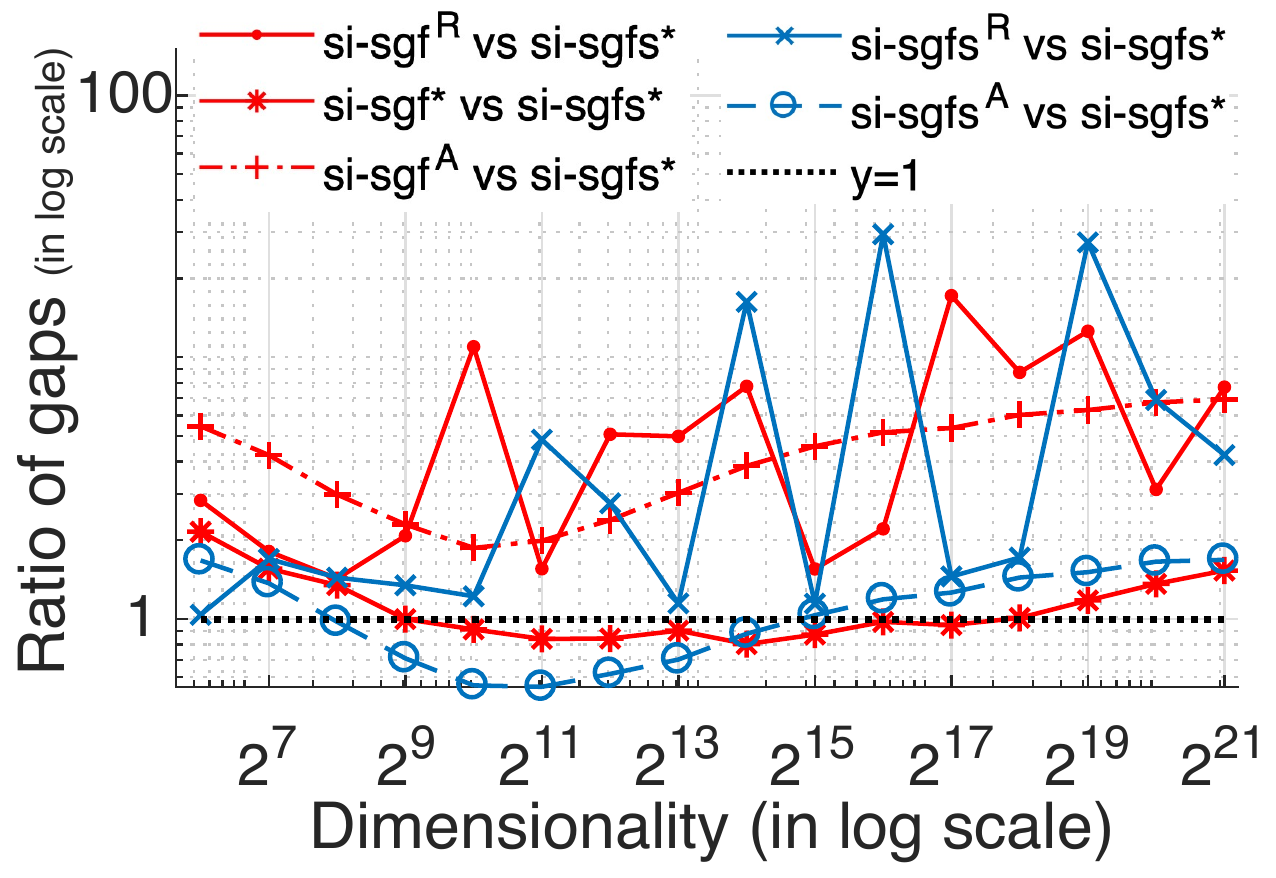}
\\(c)&(d)

\end{tabular}
\caption{\label{fig: subopt vs dim} (a). The mean suboptimality gaps of `{\it si-sgf}$^R$' and `{\it si-sgfs}$^R$', i.e., SI-SGF in both convex and strongly  settings with randomized output scheme, in comparison with  `{\it sgf} '. (b).  The mean suboptimality gaps of `{\it si-sgf}$^*$' and `{\it si-sgfs}$^*$', i.e., SI-SGF in both convex and strongly  settings with output scheme as in \eqref{AOS-2}, in comparison with  `{\it sgf} '. (c).  The mean suboptimality gaps of `{\it si-sgf}$^A$' and `{\it si-sgfs}$^A$', i.e., SI-SGF in both convex and strongly  settings with output scheme as in \eqref{second AOS}, in comparison with `{\it sgf} '. (d). The ratios between the mean suboptimality gaps of different variants of SI-SGF and  that of `{\it si-sgfs}$^*$'. For subplots (a)-(c), $6.75$ was the objective value of the initial solution to all algorithms.}
\end{figure}

The last experiment was focused on the sensitivity of the algorithms to the hyper-parameter $\delta$, which is used in   the randomized smoothing  scheme \eqref{randomized smoothing} for gradient estimation.  We set $d=2^{15}$ and tested the scenarios with the value of $\delta$  ranging from   $10^{-7}$ to $10^{-3}$. The mini-batch sizes  were 160 and 280, respectively, for SI-SGF under convex and strongly convex settings according to the first experiment above. Figure \ref{fig: subopt vs delta} and Table \ref{tab: increase delta} (in the supplemental material of this paper) summarize the results. Our benchmark `{\it sgf}', again, denotes the best suboptimality gap achieved by the SGF's output among all combinations of the three different output schemes and the mini-batch sizes of 1, 160, and 280. Each entry in this table reports the mean and standard deviation of suboptimality gaps generated by different algorithms with different  $\delta$ over 10 random replications. As one can see, the results were comparable when $\delta\leq 10^{-6}$ for all the algorithms. However,    significant performance deterioration was observed, when $\delta$ became larger. The canonical SGF appeared to be the most insensitive towards $\delta$; the deterioration  did not happen until $\delta$ was $10^{-3}$. In contrast, for  `{\it si-sgfs$^{R}$}', the observed deterioration started as $\delta$ became no less than $10^{-5}$. For all other variants of SI-SGF, the deterioration started when $\delta$ turned no less than $10^{-4}$.  Nonetheless, in all cases, the variants of SI-SGF significantly outperformed the SGF in  terms of the suboptimality. 

\begin{figure}
 \centering
\includegraphics[width=0.5\textwidth]{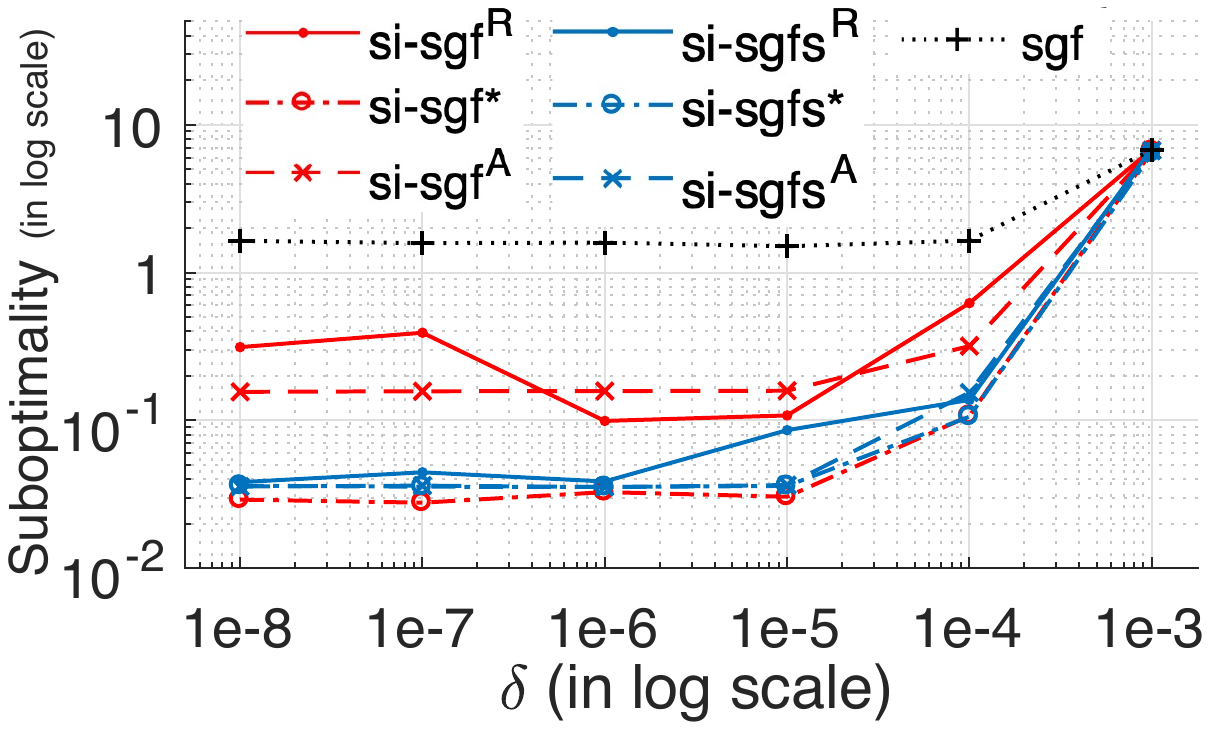} 
\caption{\label{fig: subopt vs delta} Comparisons among all the algorithms in the average suboptimality gaps out of ten random replications  for different values of the hyper-parameter $\delta$.}
\end{figure}

\section{Concluding remarks}\label{sec: conclusions}
This paper presents a sparsity-inducing stochastic gradient-free (SI-SGF) algorithm for solving high-dimensional S-ZOO problems. By exploiting  (weak) sparsity, the proposed algorithm is significantly less sensitive to the increase of dimensionality. In contrast to all existing dimension-insensitive S-ZOO paradigms, our theories do not require the (sometimes critical) assumptions such as everywhere sparse or compressible gradient. Our numerical results indicate that the proposed SI-SGF is a promising approach and can potentially outperform the baseline stochastic gradient-free algorithms.

\begin{appendix}
\section{Technical proofs}\label{sec: proof}
 
 \subsection{Proof of Proposition \ref{lemma1}}\label{Proof of proposition 1}

\begin{proof}
By construction in Step 3 of Algorithm \ref{sub-alg}, it must hold that either $\widetilde v_i\geq U$ or $\widetilde v_i=0$ for all $i=1,..., 2d$. This immediately implies that either $\vert v_i\vert\geq U$ or $v_i=0$ for $i=1,...,d$, which shows Part (a) of the proposition.

The proof for Part (b) is divided into two steps below.

{\bf Step 1.} In this step, we would like to first show that 
$\widetilde{\vbf}$, as in Algorithm  \ref{sub-alg}, is a KKT point of 
\begin{align}\label{lemma1-KKT-solution}
\min_{\zbf\in \R_+^{2d}}\,\left\{\frac{1}{2\gamma}\Vert \mathbf z-\widetilde{\xbf} \Vert^2+\sum_{i=1}^{2d} P_\lambda(z_{i}):\,\mathbf 1^\top\mathbf z\leq R\right\},
\end{align}
where $P_\lambda(\theta):=\int_{0}^\theta\frac{[a\lambda-t]_+}{a}\,\text{d}t$ for arbitrary values of $a$, $\lambda$, and $\gamma$ such that $U=a\lambda$ and $a\leq \frac{\gamma}{2}$.    More explicitly, we will show that  there exist some $\beta$ and $(\mu_i)$ such that $\widetilde \vbf=(\widetilde v_i)$ satisfies the following nonlinear system:
\begin{align}
\begin{split}
  \frac{1}{\gamma}( \widetilde v_{i}-\widetilde x_{i})
+\frac{[a\lambda- \widetilde v_{i}]_+}{a}
+\beta-\mu_{i}=\,&  \,\,0,\qquad  i=1,...,2d; \\
   \widetilde v_{i}\geq 0,\,\,\mu_{i}\geq \,&\,\,0,\qquad i=1,...,2d;
   \\\,\, \mu_{i}\cdot\widetilde  v_{i}=\,&  \,\,0,\qquad i=1,...,2d; 
\\
  \beta\geq0,~~\beta\cdot\left(\sum_{i=1}^{2d}  \widetilde v_{i}-  R\right)=\,&  \,  \,0,~~\sum_{i=1}^{2d}  \widetilde v_{i}\leq  R.
\end{split}\label{KKT-conditions}
\end{align}


Let $\zbf$ be the result computed in Step 2 of Algorithm \ref{sub-alg}. We consider two cases below.

 \noindent {\it Case (i): Consider the case where $\1^\top\zbf \leq R$:}

\noindent According to Algorithm \ref{sub-alg},   $\widetilde v_i=z_{i}$  for $i=1,...,2d$.   Then we can set $\beta=0$, and  let
$\mu_i=\begin{cases}\lambda-\frac{\widetilde x_i}{\gamma}, &\text{if $\widetilde x_{i}< U$},
\\
0, &\text{otherwise}, 
\end{cases}
$    for all $i:\,1\leq i \leq 2d$. By Steps 2 and 3 of Algorithm \ref{sub-alg}, we evidently have $\mu_i\cdot \widetilde v_i=0$ for all $i$ and $\1^\top\widetilde \vbf\leq R$.  Thus, the above construction immediately leads to the third and the last lines of \eqref{KKT-conditions}. Below, we will prove the first two lines of \eqref{KKT-conditions}.

For all $i$ such that $\widetilde x_{i} \geq U$, by the construction both in the above and in Steps 2 and 3 of Algorithm \ref{sub-alg}, we have $\widetilde v_{i}=  z_{i} = \widetilde x_{i}\geq U$, $\beta= 0$ and  $\mu_{i}=0$.  Thus, the second line of \eqref{KKT-conditions} holds for all  $i$ such that $\widetilde x_{i} \geq U$. Similarly, 
$
 \frac{1}{\gamma}( \widetilde v_{i}-\widetilde x_{i})+\frac{[a\lambda- \widetilde v_{i}]_+}{a}+\beta-\mu_{i}= \frac{1}{\gamma}( \widetilde v_{i}-\widetilde x_{i})+\beta-\mu_{i} = 0$, which shows that the first line of \eqref{KKT-conditions} holds for $i$ such that $\widetilde x_{i} \geq U$.    
 
 For all $i$ such that $\,\widetilde x_{i} < U$, we have $\widetilde v_{i}=z_{i} = 0$, $\beta = 0$, and $\mu_{i}= \lambda-\frac{\widetilde x_{i}}{\gamma}$. As a result, 
$
 \frac{1}{\gamma}( \widetilde v_{i}-\widetilde x_{i})+\frac{[a\lambda- \widetilde v_{i}]_+}{a}+\beta-\mu_{i}=-\frac{1}{\gamma}\widetilde x_{i}+\lambda+\beta-\mu_{i}= 0$, which shows that the first line of \eqref{KKT-conditions} holds for $i$ such that $\widetilde x_{i} < U$.  In view of $ \gamma \geq 2a $ and $U=a\lambda$, we have  $\gamma\lambda-\widetilde x_{i}\geq U-\widetilde x_{i} > 0$. Thus, $\mu_{i}=\lambda-\frac{\widetilde x_{i}}{\gamma}\geq 0$. This, combined with  $\widetilde v_i=0$ as proven above, leads to the satisfaction of the second line in \eqref{KKT-conditions} for all $i:\,\widetilde x_{i} < U$.

\noindent{\it Case (ii): Consider the case where $\1^\top \zbf> R$.} 

\noindent  By Step 1 of Algorithm \ref{sub-alg}, $(\widetilde x_{(i)})$ is the vector   after sorting the components of $\widetilde \xbf$  in a non-increasing order; that is, $\widetilde x_{(1)}\geq \widetilde x_{(2)}\geq \cdots\geq \widetilde x_{(2d)}$. Also recall that  $(\widetilde v_{(i)})$ is the  vector following the same index order as in $(\widetilde x_{(i)})$. We   let 
\begin{align}\beta =-\frac{\tau}{\gamma}, \text{~and~} 
\mu_{(i)}=\begin{cases}0,& i=1,\cdots, \rho, 
\\
\lambda-\frac{\widetilde x_{(i)}+\tau}{\gamma}, &\text{otherwise},\label{beta mu construct}
\end{cases}
\end{align}
in the KKT conditions \eqref{KKT-conditions}.

We first check the feasibility of $\widetilde \vbf$. We claim that $\widetilde v_{(i)}\geq 0$ for all $i=1,...,2d$. To see this, by the construction in Step 3 of Algorithm \ref{sub-alg}, $\widetilde v_{(i)}=\widetilde x_{(i)}+\tau\geq x_{(\rho)}+\tau = \widetilde v_{(\rho)}\geq U =a\lambda>0$ for $i =1,...,\rho$. Meanwhile, $\widetilde v_{(i)}=0$ for all $i >\rho$.   By the same observation about $\widetilde v_{(i)}$ above, we have $\sum_{i=1}^{2d}\widetilde v_{(i)}=\sum_{i=1}^{\rho} \widetilde v_{(i)}$.  Combining this with the  relationship that $\widetilde v_{(i)}=\widetilde x_{(i)}+\tau$,  we then have
\begin{equation}
    \sum_{i=1}^{2d}\widetilde v_{(i)}=\sum_{i=1}^{\rho} (\widetilde x_{(i)}+\tau)
 =\rho\tau+\sum_{i=1}^{\rho}\widetilde x_{(i)}=R-\sum_{i=1}^{\rho}\widetilde x_{(i)}+\sum_{i=1}^{\rho}\widetilde x_{(i)}=R.\label{ell 1}   
\end{equation}
where the second last equality is due to how $\tau$ is constructed in Step 3 of Algorithm \ref{sub-alg}.
  Therefore, $\widetilde \vbf$ is a feasible solution to \eqref{KKT-conditions}; namely, the last relationship in the fourth line of \eqref{KKT-conditions} holds. 
   By the same reasoning, we immediately have the second relationship in the fourth line \eqref{KKT-conditions} to be satisfied. 

By construction, $\widetilde v_{(i)}=0$ for all $i>\rho$ and $\mu_{(i)}=0$ for all $i\leq \rho$. We then have the third line of \eqref{KKT-conditions} to hold.

For $i=1,\ldots, \rho$, we have $\widetilde v_{(i)} = \widetilde x_{(i)}+\tau\geq U=a\lambda$, $\beta =-\frac{\tau}{\gamma}$, and $\mu_{(i)}=0$, thus
$
 \frac{1}{\gamma}( \widetilde v_{(i)}-\widetilde x_{(i)})+\frac{[a\lambda- \widetilde v_{(i)}]_+}{a}+\beta-\mu_{(i)} = 
\frac{1}{\gamma}(\widetilde v_{(i)}-\widetilde x_{(i)})+\beta=0.
$
For $i =\rho+1,\ldots, 2d$, we have $\widetilde v_{(i)} =0$, $\beta = -\frac{\tau}{\gamma}$, $\mu_{(i)}=\lambda-\frac{\widetilde x_{(i)}+\tau}{\gamma}$, and consequently, 
$
 \frac{1}{\gamma}( \widetilde v_{(i)}-\widetilde x_{(i)})+\frac{[a\lambda- \widetilde v_{(i)}]_+}{a}+\beta-\mu_{(i)} = 
-\frac{1}{\gamma}\widetilde x_{(i)} +\lambda +\beta-\mu_{(i)} = 0,
$ which implies that the first line of \eqref{KKT-conditions} holds.

By the above choices of parameters, it is also easy to verify that $\mu_{(i)}\cdot \widetilde v_{(i)}=0$ for all $i$, which immediately leads to the third line of \eqref{KKT-conditions}. 

To finally verify that $\widetilde \vbf$ is a KKT point, it suffices to show that $\beta\geq 0$ and $\mu_{(i)}\geq 0$ for $i=\rho+1, \cdots, 2d$.  Between them, we first show $\beta\geq 0$. To that end, it suffices to prove $\tau < 0$ by contradiction. For this purpose, we suppose $\tau \geq 0$. Let $k:=\max \{i : \widetilde x_{(i)} \geq U\}$.  Then, by construction, $\widetilde x_{(k)}+\tau\geq\widetilde  x_{(k)}\geq U=a\lambda$ and $\1^\top\zbf=\sum_{i=1}^k\widetilde x_{(i)}$. By definition of $\rho$, we have $\rho\geq k$. Since $\{\widetilde x_{(i)}\}$ is a descent sequence, we have $v_{(i)} = \widetilde x_{(i)}+\tau \geq \widetilde x_{(\rho)} + \tau \geq U=a\lambda >0$ for all  $i:\,i\leq \rho$. Recall that we are considering the case where $\1^\top \widehat \zbf >R$. This contradicts with  \eqref{ell 1}, as $
R = \sum_{i=1}^{2d} v_{(i)}= \sum_{i=1}^{\rho} v_{(i)}
\geq \sum_{i=1}^{k} v_{(i)} = k \tau + \sum_{i=1}^{k} \widetilde x_{(i)}\geq \sum_{i=1}^{k} \widetilde x_{(i)} 
= \mathbf 1^\top \widehat{\mathbf z}.
$
  We have thus proven $\tau < 0$, which evidently leads to $\beta =-\frac{\tau}{\gamma} \geq 0$ in view of \eqref{beta mu construct}.

To show  $\mu_{i}\geq 0$, we recall the construction of $\mu_i$ by \eqref{beta mu construct}. If $\widetilde x_{(\rho+1)}+\tau\leq 0$, then by the positiveness of $\lambda$ and $\gamma$ and the fact that $\{\widetilde x_{(i)}\}$ is a descent sequence, it is easy to see $\mu_{(i)}\geq 0$ for all $i$. Therefore, we only need to consider the case when $\widetilde x_{(\rho+1)}+\tau> 0$ below.  Let $\tau' = \frac{1}{\rho+1}\left(R-\sum_{i=1}^{\rho+1}\widetilde x_{(i)}\right)$, 
 then
$\tau'-\tau =
\frac{1}{\rho+1}\left(R-\sum_{i=1}^{\rho+1}\widetilde x_{(i)}\right) -\frac{1}{\rho}\left(R-\sum_{i=1}^{\rho}\widetilde  x_{(i)} \right)
=\frac{-R-\rho\widetilde  x_{(\rho+1)} + \sum_{i=1}^{\rho}\widetilde x_{(i)} }{\rho(\rho+1)} = -\frac{\widetilde x_{(\rho+1)}+\tau}{\rho+1}$, where the last equality is due to $\tau= \frac{R-\sum_{i=1}^\rho  \widetilde x_{(i)}}{\rho}$ as in Algorithm \ref{sub-alg}.
As a result, $
\widetilde x_{(\rho+1)}  + \tau' - (\tau'-\tau)= \widetilde x_{(\rho+1)} + \tau' +\frac{\widetilde x_{(\rho+1)}+\tau}{\rho+1}
\Longrightarrow
\widetilde x_{(\rho+1)} + \tau= \frac{\rho+1}{\rho}(\widetilde x_{(\rho+1)} +\tau')
$.  Thus, by the definition of  $\rho$, we have 
 $\widetilde x_{(\rho+1)}+\tau' \leq \widetilde x_{(\rho+1)} + \tau < U=a\lambda$.  In view of $\gamma\geq 2a$ and $\frac{\rho+1}{2\rho}\leq  1$,  we further have 
$\frac{\widetilde x_{(\rho+1)}+\tau}{\gamma}\leq  \frac{\widetilde x_{(\rho+1)}+\tau}{2a} =\frac{\rho+1}{2\rho}\cdot\frac{\widetilde x_{(\rho+1)} +\tau'}{a}  < \lambda.$
Since $\{\widetilde x_{(i)}\}$ is a descent sequence, we have $\widetilde x_{(i)}+\tau\leq \widetilde x_{(\rho+1)}+\tau <\gamma \lambda$ for $i=\rho+1, \cdots, d$, which, combined with \eqref{beta mu construct}, proves the non-negativeness of $\mu_{(i)}$. 

In sum, we have proven that $\widetilde\vbf$ is   a KKT point of  \eqref{lemma1-KKT-solution}.

{\bf Step 2.} In this step of the proof, we will show that  the output of Algorithm \ref{sub-alg}, $\vbf$, is the optimal solution to \eqref{lemma1-KKT-solution-3_solution}.   We first observe that $\widetilde{\vbf}$ is   the optimal solution to following convex problem, because its KKT conditions at solution $\widetilde\vbf$ coincide with those  of \eqref{lemma1-KKT-solution}.
\begin{align}\label{lemma1-KKT-solution-2}
  \min_{\wbf=(w_i)\in \R_+^{2d}}\,\left\{\frac{1}{2\gamma}\Vert \wbf-\widetilde{\xbf} \Vert^2+\sum_{i=1}^{2d} \frac{[a\lambda-\widetilde v_{i}]_+}{a}\cdot  w_{i}:\,\mathbf 1^\top\wbf\leq R\right\}:
\end{align}

We claim that if $\widetilde x_{i}=0$ then $\widetilde v_{i}=0$ for any $i$. To see this, suppose the $l$-th entry, $\widetilde x_{l}$, of $\widetilde \xbf$, equals 0. let $\widetilde\vbf'$ be a feasible solution to \eqref{lemma1-KKT-solution-2} and  $\widetilde v_{l}'$ be the $l$-th entry of $\widetilde\vbf$. Suppose that $\widetilde v_{l}'\neq 0$ and it must be that $\widetilde v_{l}'>0$ by its definition. Then $\widetilde \vbf''=\widetilde\vbf'-e_{l}\cdot \widetilde v_l'$ is a strictly better solution than $\widetilde \vbf'$ in terms of the objective value.

We also claim that $\widetilde v_i$ and $\widetilde v_{i+d}$ cannot be nonzero simultaneously since at least one of $\widetilde x_i$ and $\widetilde x_{i+d}$ is zero.  To see this,  recall that  $x_i$ is the $i$-th entry of $\xbf$ for $i=1,...,d$. Then, by definition, $\widetilde x_{i}=\max\{0,\,x_i\}$ and $\widetilde x_{d+i}=\max\{0,\,-x_i\}$, for all $i=1,...,d$. Therefore, it must hold that $\widetilde x_{i}\cdot\widetilde x_{d+i}=0 $ for all $i=1,...,d$. We have shown that  $\widetilde x_{i}=0\Longrightarrow \widetilde v_{i}=0$ for any $i$, thus, at least one of $\widetilde v_i$ and $\widetilde v_{i+d}$ must be zero. By construction, $v_i=\widetilde v_i-\widetilde v_{i+d}$ for all $i$. We thus have
$
\begin{cases}
v_i=\widetilde v_i\geq 0,~\text{and}~\widetilde v_{i+d}=0,&\text{if $x_i\geq 0$},\\
v_i=-\widetilde v_{i+d}\leq 0,~\text{and}~\widetilde v_{i}=0, &\text{if $x_i<0$},
\end{cases}$
which directly gives rise to 
$
\begin{cases}
\vert v_i\vert =\widetilde v_i = \widetilde v_i + \widetilde v_{i+d}, &\text{if $x_i\geq 0$},\\
\vert v_i\vert =\widetilde v_{i+d} = \widetilde v_i + \widetilde v_{i+d}, &\text{if $x_i<0$},
\end{cases}
$ for all $i=1,...,d$.

Thus,  $\widetilde \vbf$ is the optimal solution to  
\begin{align}
\min_{ \wbf=(w_i)}\,&~\sum_{i=1}^d\frac{1}{2\gamma}(w_i-\max\{0,\,x_i\} )^2+\sum_{i=1}^{d}\frac{1}{2\gamma}(w_{i+d} -\max\{0,\,-x_i\} )^2\nonumber
\\\qquad&~~+\sum_{i=1}^{d} \frac{[a\lambda-\vert v_i\vert]_+}{a}\cdot (w_i+w_{i+d})\, \,\nonumber
\\s.t.&~\mathbf 1^\top \wbf \leq R;~~~\wbf\geq \0.\nonumber
\end{align}
Equivalently, $\vbf$ is the optimal solution to 
$\min_{\vbf'\in\R^d}\,\{\sum_{\substack{x_i\geq 0}}\frac{1}{2\gamma}\left(v_i'-x_i \right)^2+\sum_{\substack{x_i< 0}}\frac{1}{2\gamma}\left(v_i'-x_i\right)^2 +\sum_{i=1}^{d} \frac{[a\lambda-\vert v_i\vert]_+}{a}\cdot \vert v_i'\vert:\, \Vert \vbf'\Vert_1 \leq R\},
$ which immediately leads to the desired result. \qed
\end{proof}



\subsection{Proof of Lemma \ref{bounding one particular term}}\label{sec proof lemma here}
 \begin{proof}
Observe that  bounding the left-hand-side of the desired inequality can be reduced to bounding   $\Delta_1$ and $\Delta_2$   below:
{\smaller \begin{align}
 &  \frac{1}{2}\E_{\mathcal V_M}\left[\max_{\widehat S\subset\{1,...,d\}:\,\vert \widehat S\vert\leq \frac{2R}{a\lambda}}\left.\left\Vert \frac{1}{M} \sum_{m=1}^M\frac{f(\xbf+\delta \ubf^{m},\xi^{m})-f(\xbf,\xi^{m})}{\delta}\ubf^{m}_{\widehat S}-\frac{1}{M} \sum_{m=1}^M\nabla_{\widehat S} f(\xbf,\xi^{m})\right\Vert^2\right.\right]\nonumber
  \\  \leq\,& \underbrace{  \E_{\mathcal V_M}\left[\max_{\widehat S\subset\{1,...,d\}:\,\vert \widehat S\vert\leq \frac{2R}{a\lambda}}\left.\left\Vert   \sum_{m=1}^M\left(\frac{f(\xbf+\delta \ubf^{m},\xi^{m})-f(\xbf,\xi^{m})}{M\cdot \delta}\ubf^{m}_{\widehat S}- \frac{\ubf^{m}_{\widehat S}(\ubf^{m})^\top\nabla f(\xbf,\xi^{m})}{M}\right)\right\Vert^2\right.\right]}_{ =: \Delta_1}\nonumber
 \\&  +\underbrace{  \E_{\mathcal V_M}\left[\max_{\widehat S\subset\{1,...,d\}:\,\vert \widehat S\vert\leq \frac{2R}{a\lambda}}\left.\left\Vert \frac{1}{M}\sum_{m=1}^M\nabla_{\widehat S}f(\xbf,\xi^{m})-\frac{1}{M}\sum_{m=1}^M\ubf^{m}_{\widehat S}(\ubf^{m} )^\top\nabla f(\xbf,\xi^{m})\right\Vert^2\right. \right]}_{=:  \Delta_2}\nonumber
 \end{align}}
 
(i) To bound $\Delta_1$,   by Jensen's inequality,
{\smaller
\begin{align}
\Delta_1 \leq \,&\,\E_{\mathcal V_M}\left[\max_{\substack{\widehat S\subset\{1,...,d\}:\,
\\\vert \widehat S\vert\leq \frac{2R}{a\lambda}}}\left.   \frac{1}{M}\sum_{m=1}^M \left\Vert\left(\frac{f(\xbf+\delta \ubf^{m},\xi^{m})-f(\xbf,\xi^{m})}{\delta}-(\ubf^{m})^\top\nabla f(\xbf,\xi^{m})\right)\ubf^{m}_{\widehat S}\right\Vert^2\right.\right]\nonumber
\\  \leq\,&\,\frac{1}{M}\sum_{m=1}^M\E_{\mathcal V_M}\left[\left. \max_{\substack{\widehat S\subset\{1,...,d\}:\,
\\\vert \widehat S\vert\leq \frac{2R}{a\lambda}}}\left\Vert\left(\frac{f(\xbf+\delta \ubf^{m},\xi^{m})-f(\xbf,\xi^{m})-\left\langle\delta\ubf^{m},\,\nabla f(\xbf,\xi^{m})\right\rangle}{\delta}\right)\ubf^{m}_{\widehat S}\right\Vert^2\right.\right]\nonumber
\\  {\leq}\,&\,\frac{L^2\delta^2}{4}\E_{\mathcal V_M}\left[\max_{\substack{\widehat S\subset\{1,...,d\}:\,
\\\vert \widehat S\vert\leq \frac{2R}{a\lambda}}}\left\Vert   \ubf^{m}\right\Vert^4\cdot \vert  \widehat S\vert\right]\leq
\frac{L^2\delta^2}{2}d^2  \cdot \frac{R}{a\lambda}, \nonumber
\end{align}
}\noindent
where the last line  results from three observations: (i) $\Vert \ubf_{\widehat S}^m\Vert^2=\vert \widehat S\vert$; (ii) the Lipschitz continuity of $\nabla f(\,\cdot\,,\xi^{m})$ implies that (as per \eqref{smooth well-known general}) $\vert f(\xbf+\delta\ubf^{m},\xi^{m})-f(\xbf,\xi^{m})-\delta(\ubf^{m})^\top\nabla\nonumber f(\xbf,\xi^{m})\vert\leq\frac{L \delta^2\Vert \ubf^{m}\Vert^2}{2}
$ for almost every $\xi^m$ and every $m=1,...,M$; and (iii) $\Vert\ubf^m\Vert^2=d$.

(ii)  Let $\Xi:=(\xi^{m}:\,1\leq m\leq M)$. To bound $\Delta_2$,  we may invoke Lemma \ref{useful lemma 1} in Section \ref{sec auxiliary} below, together with the independence between $\{\ubf^{m}\}_{1\leq m\leq M}$ and $\nabla f(\xbf,\xi^{m})$, to obtain
 \begin{multline}
   \E_{\Xi} \left[\E_{(\ubf^{m}:\,m=1,...,M)} \left\{\vphantom{v^{v^{v^{v^v}}}_{v_{v_v}}}\underset{1\leq \iota\leq d}{\max} \left[\frac{1}{M}\sum_{m=1}^M\left(\nabla_{\iota}f(\xbf,\xi^{m})-u_{\iota}^{m}(\ubf^{m})^\top \nabla f(\xbf,\xi^{m})\right)\right]^2     \vphantom{v^{v^{v^{v^v}}}_{v_{v_v}}}  \right\}\right]
   \\\leq\E_{\Xi} \left[ \left.\frac{193\sum_{m=1}^M\Vert\nabla f(\xbf,\xi^{m})\Vert^2 }{M^2}\cdot \ln d\right.\right]
 \leq \frac{193(\sigma^2+ \Vert\nabla F(\xbf)\Vert^2 )}{M}\cdot \ln d,\nonumber
 \end{multline}
 where the last inequality is immediately from Assumption \ref{mean and variance}.
 Therefore,
$
 \Delta_2\leq    \,   \frac{386R}{a\lambda}\cdot     \ln d\cdot  \frac{\sigma^2+\Vert\nabla F(\xbf)\Vert^2 }{M}.
$
Combining (i) and (ii) above immediately leads to the desired result. \qed
 \end{proof}
 \subsection{An auxiliary lemma}\label{sec auxiliary}
 \begin{lemma}\label{useful lemma 1}
Let $\{\ubf^m:\,m=1,...,M\}$ be an independent sequence of   $d$-dimensional random vectors whose entries are iid symmetric Bernoulli random variables. Consider a given sequence $\{\boldsymbol\upsilon^m:\,m=1,...,M\}\subset\R^d$.  Let  $u_{i}^m$ and $\upsilon_i^m$ be the $i$-th entries of $\ubf^m$ and $\boldsymbol\upsilon^m$, respectively. Then 
$Z_{\iota}:=\left[M^{-1} \sum_{m=1}^M(\upsilon_{\iota}^m-u_{\iota}^{m}(\ubf^{m})^\top\boldsymbol\upsilon^m)\right]^2$ for any $\iota\in\{1,...,d\}$ is a subexponetial random variable. Furthermore,
$\E\left[\max_{1\leq \iota\leq d}  Z_{\iota} \right]\leq    \frac{193\sum_{m=1}^M\Vert \boldsymbol\upsilon^m\Vert^2 }{M^2}\cdot \ln d.$
 \end{lemma}
 \begin{proof}
 
We will first examine the random variable $\upsilon_{\iota}^m-u_{\iota}^{m}(\ubf^{m})^\top\boldsymbol\upsilon^m$ for a given vector $\boldsymbol\upsilon^m=(\upsilon^m_i)\in\R^d$ and a given index   $\iota\in\{1,...,d\}$.   Observe that 
$u_{\iota}^m\in\{-1,\,1\}\Longrightarrow (u_{\iota}^m)^2=1$, we thus have
that  $\upsilon^m_\iota-u_{\iota}^{m}(\ubf^{m})^\top \boldsymbol\upsilon^m=v_{\iota}^m-(u_{\iota}^{m})^2 \cdot \upsilon^m_{\iota}-\sum_{i\neq \iota}^d u_{\iota}^{m}\cdot u_i^{m}\cdot \upsilon^m_i=\sum_{i\neq \iota} u_{\iota}^{m}\cdot u_i^{m}\cdot \upsilon^m_i$. Thus, $Z_\iota=\left[M^{-1} \sum_{m=1}^M\left(u_{\iota}^{m}\sum_{i\neq \iota} u_{i}^{m}\upsilon_i^m \right)\right]^2$. Below, we prove that  $\left[M^{-1} \sum_{m=1}^M\left(u_{\iota}^{m}\sum_{i\neq \iota} u_{i}^{m}\upsilon_i^m \right)\right]^2$ is a subexponential random variable.

Because $\{u_{i}^{m}\}$ are i.i.d. symmetric Bernoulli random variables, by Hoeffding's inequality  (See Theorem 2.2.2 of \cite{vershynin2018high}) and the fact that $Prob[u_\iota=1]=Prob[u_\iota=-1]
  =0.5$, we have 
\begin{align}
&\begin{rcases*}
   Prob\left\{ \sum_{i\neq \iota} u_{i}^{m} \upsilon^m_i \geq t\right\}\leq  \exp\left(-\frac{t^2}{2\sum_{i\neq \iota} (\upsilon^m_i)^2}\right)\leq  \exp\left(-\frac{t^2}{2\Vert\boldsymbol\upsilon^m\Vert^2}\right)
   \\
   Prob\left\{ \sum_{i\neq \iota} u_{i}^{m} \upsilon^m_i \leq -t\right\}\leq  \exp\left(-\frac{t^2}{2\sum_{i\neq \iota} (\upsilon^m_i)^2}\right)\leq  \exp\left(-\frac{t^2}{2\Vert\boldsymbol\upsilon^m\Vert^2}\right)
   \end{rcases*}{\Longrightarrow}\nonumber
  \\  & 
   Prob\left\{u_{\iota}^{m}\sum_{i\neq\iota} u_{i}^{m} \upsilon^m_i \geq t~\rvert~ u_{\iota}^{m}=1\right\}\cdot 0.5+Prob\left\{ u_{\iota}^{m}\sum_{i\neq\iota} u_{i}^{m} \upsilon^m_i \geq t~\rvert~ u_{\iota}^{m}=-1\right\}\cdot 0.5\nonumber
   \\=&Prob\left\{ u_{\iota}^{m}\sum_{i\neq\iota} u_{i}^{m} \upsilon^m_i \geq t \right\}\leq  \exp\left(-\frac{t^2}{2\Vert\boldsymbol\upsilon^m\Vert^2}\right),\nonumber
   \end{align}
   and likewise, $Prob\{ u_{\iota}^{m}\sum_{i\neq\iota} u_{i}^{m} \upsilon^m_i \leq -t \}\leq  \exp\left(-\frac{t^2}{2\Vert\boldsymbol\upsilon^m\Vert^2}\right)$.
 Therefore, $u_{\iota}^{m}\sum_{i\neq\iota} u_{i}^{m}v_i^m$ is a subgaussian random variable. 
 
 By a well-known property of a subgaussian random variable (as in Lemma 1.5 by \cite{rigollet201518}), it holds that
\begin{align}
  \E\left[\exp\left\{\tau\cdot \left(u_{\iota}^{m}\sum_{i\neq\iota} u_{i}^{m}\upsilon^m_i \right)\right\}\right]\leq \exp\left\{4 \Vert\boldsymbol\upsilon^m\Vert^2 \tau^2\right\},~~\text{for any $\tau\in\R$}.\label{MGF}
\end{align}
In view of the fact that $\{u_{\iota}^{m}\sum_{i\neq\iota} u_{i}^{m}\upsilon^m_i:\,m=1,...,M\}$ is a sequence of independent random variables, we  obtain from  \eqref{MGF} that 
$
  \E\left[\exp\left\{\tau\cdot M^{-1} \sum_{m=1}^M\left(u_{\iota}^{m}\sum_{i\neq \iota} u_{i}^{m}\upsilon^m_i \right)\right\}\right]\leq \exp\left\{\frac{4 \sum_{m=1}^M\Vert\boldsymbol\upsilon^m\Vert^2 \tau^2}{M^2}\right\}.$
   By a well-known relationship between subgaussian and subexponential random variables (as in Lemma 1.12 by \cite{rigollet201518}), we then have that $Z_\iota$ is subexponential in the sense that,  for all $\vert \tau\vert\leq \frac{M^2}{128 \sum_{m=1}^M\Vert\boldsymbol\upsilon^m\Vert^2}$,
\begin{align}  \E[\exp\{\tau Z_\iota-\tau \E[Z_\iota]\}]\leq \exp\left\{128 \tau^2\left( \frac{8\sum_{m=1}^M\Vert\boldsymbol\upsilon^m\Vert^2}{M^{2}}\right)^2\right\} 
\label{mgf of max}\end{align}
which immediately leads to the desired result in the first part of this lemma.

Below we  show the second part of the lemma. For any $\tau:\,0<\tau\leq  \frac{M^2}{128 \sum_{m=1}^M\Vert \boldsymbol\upsilon^{m}\Vert^2}$, we have
$ \E [\max_{1\leq \iota\leq d}  (Z_{\iota}-\E[Z_\iota] )]=  \frac{1}{\tau}\E [\ln \exp \{\tau\cdot \max_{1\leq \iota\leq d}(Z_{\iota}-\E[Z_\iota]) \}  ] 
\leq     \frac{1}{\tau}\ln \E[\exp \{\tau\cdot \max_{1\leq \iota\leq d}(Z_{\iota}-\E[Z_\iota]) \} ] 
 \leq    \frac{1}{\tau}\ln \E[\sum_{1\leq \iota\leq d}\exp\{\tau\cdot (Z_{\iota}-\E[Z_\iota])\}].$

By \eqref{mgf of max},   
$
  \E [\max_{1\leq \iota\leq d}  (Z_{\iota}-\E[Z_\iota])]\leq    \frac{1}{\tau} \ln  (d\cdot \exp \{128 \tau^2 ( 8M^{-2}\sum_{m=1}^M\Vert\boldsymbol\upsilon^m\Vert^2 )^2 \} )\nonumber
=   \frac{\ln d}{\tau}+128 \tau\cdot  ( 8M^{-2}\sum_{m=1}^M\Vert \boldsymbol\upsilon^m\Vert^2 )^2.$
We may as well let $\tau=\frac{M^2}{128 \sum_{m=1}^M\Vert \boldsymbol\upsilon^m\Vert^2 }$. Therefore, 
\begin{align}&\E\left[\max_{1\leq \iota\leq d}  Z_{\iota}\right]-\max_{1\leq \iota\leq d} \E[Z_\iota]   \leq\E\left[\max_{1\leq \iota\leq d} \left(Z_{\iota}-\E[Z_\iota]\right)\right]\nonumber
\\\leq &\frac{128 \sum_{m=1}^M\Vert \boldsymbol\upsilon^m\Vert^2 }{M^2}\cdot \ln d + \frac{64\sum_{m=1}^M\Vert \boldsymbol\upsilon^m \Vert^2}{M^2}.\label{test inequality new 123}
\end{align}

Because $u_{i_1}^{m_1}$ and $u_{i_2}^{m_2}$ are  centered at zero, we know (also by the independence of the random variables) that
\begin{align}
\mathbb E[u_\iota^{m_1} u_\iota^{m_2}  v_{i_1}^{m_1}\upsilon_{i_2}^{m_2}u_{i_1}^{m_1} u_{i_2}^{m_2}]=0,~~~\forall (i_1,\,i_2,\,m_1,\,m_2):\, i_1\neq i_2~\text{or}~m_1\neq m_2.\label{to use zero centered}
\end{align}
Evidently,  for any $\iota:\,1\leq \iota\leq d$, it holds that
\begin{align}
  \E[Z_\iota]&\,=  ~\E\left[M^{-2}\left(\sum_{\substack{m_1,\, m_2}}\,\,\sum_{
\substack{i_1,i_2: \\i_1\neq \iota,\,i_2\neq \iota
}}u_\iota^{m_1} u_\iota^{m_2}  v_{i_1}^{m_1}\upsilon_{i_2}^{m_2}u_{i_1}^{m_1} u_{i_2}^{m_2}\right)\right]\nonumber
\\&\stackrel{\eqref{to use zero centered}}{=}   ~ \E\left[M^{-2}\sum_{\substack{i\neq \iota\\ 1\leq m\leq M}}(u_\iota^{m})^2 (u_i^{m}v_i^m)^2\right]
\leq    M^{-2}\sum_{m=1}^M\Vert \boldsymbol\upsilon^m\Vert^2\label{expectation upper bound}
\end{align}
 The last inequality above is due to $u_{i}^{m}\in\{-1,1\}$ for all  $i$ and $m$.
 
Combining \eqref{test inequality new 123}, \eqref{expectation upper bound}, and the assumption that $d\geq 3$, we have 
$ 
  \E [\max_{1\leq \iota\leq d}  Z_{\iota}  ]\leq \frac{128 \sum_{m=1}^M\Vert \boldsymbol\upsilon^m\Vert^2 }{M^2} \ln d + \frac{65\sum_{m=1}^M\Vert \boldsymbol\upsilon^m\Vert^2}{M^2}\leq \frac{193 \sum_{m=1}^M\Vert \boldsymbol\upsilon^m\Vert^2 }{M^2} \ln d$, which completes the proof. \qed
 \end{proof}
 
\end{appendix}

 \bibliographystyle{abbrv}
 \bibliography{references.bib} 

\begin{thebibliography}{10}

\bibitem{agarwal2010optimal}
A.~Agarwal, O.~Dekel, and L.~Xiao.
\newblock Optimal algorithms for online convex optimization with multi-point
  bandit feedback.
\newblock In {\em COLT}, pages 28--40. Citeseer, 2010.

\bibitem{balasubramanian2018zeroth_neurips}
K.~Balasubramanian and S.~Ghadimi.
\newblock Zeroth-order (non)-convex stochastic optimization via conditional
  gradient and gradient updates.
\newblock In {\em Proceedings of the 32nd International Conference on Neural
  Information Processing Systems}, pages 3459--3468, 2018.

\bibitem{balasubramanian2018zeroth}
K.~Balasubramanian and S.~Ghadimi.
\newblock Zeroth-order nonconvex stochastic optimization: Handling constraints,
  high-dimensionality and saddle-points.
\newblock {\em arXiv preprint arXiv:1809.06474}, 2018.

\bibitem{bickel2009simultaneous}
P.~J. Bickel, Y.~Ritov, and A.~B. Tsybakov.
\newblock Simultaneous analysis of lasso and dantzig selector.
\newblock {\em The Annals of statistics}, 37(4):1705--1732, 2009.

\bibitem{cai2020zeroth}
H.~Cai, D.~Mckenzie, W.~Yin, and Z.~Zhang.
\newblock Zeroth-order regularized optimization (zoro): Approximately sparse
  gradients and adaptive sampling.
\newblock {\em arXiv preprint arXiv:2003.13001}, 2020.

\bibitem{candes2007dantzig}
E.~Candes and T.~Tao.
\newblock The dantzig selector: Statistical estimation when p is much larger
  than n.
\newblock {\em The annals of Statistics}, 35(6):2313--2351, 2007.

\bibitem{cho2001reduction}
P.~S. Cho and M.~H. Phillips.
\newblock Reduction of computational dimensionality in inverse radiotherapy
  planning using sparse matrix operations.
\newblock {\em Physics in Medicine \& Biology}, 46(5):N117, 2001.

\bibitem{duchi2015optimal}
J.~C. Duchi, M.~I. Jordan, M.~J. Wainwright, and A.~Wibisono.
\newblock Optimal rates for zero-order convex optimization: The power of two
  function evaluations.
\newblock {\em IEEE Transactions on Information Theory}, 61(5):2788--2806,
  2015.

\bibitem{fan2001}
J.~Fan and R.~Li.
\newblock Variable selection via nonconcave penalized likelihood and its oracle
  properties.
\newblock {\em J. Amer. Statist. Assoc.}, 96(456):1348--1360, 2003.

\bibitem{ghadimi2013stochastic}
S.~Ghadimi and G.~Lan.
\newblock Stochastic first-and zeroth-order methods for nonconvex stochastic
  programming.
\newblock {\em SIAM Journal on Optimization}, 23(4):2341--2368, 2013.

\bibitem{hooke1961direct}
R.~Hooke and T.~A. Jeeves.
\newblock ``direct search''solution of numerical and statistical problems.
\newblock {\em Journal of the ACM (JACM)}, 8(2):212--229, 1961.

\bibitem{jamieson2012query}
K.~G. Jamieson, R.~D. Nowak, and B.~Recht.
\newblock Query complexity of derivative-free optimization.
\newblock In {\em Proceedings of the 25th International Conference on Neural
  Information Processing Systems-Volume 2}, pages 2672--2680, 2012.

\bibitem{jordan1995principles}
W.~C. Jordan and S.~C. Graves.
\newblock Principles on the benefits of manufacturing process flexibility.
\newblock {\em Management science}, 41(4):577--594, 1995.

\bibitem{larson2019derivative}
J.~Larson, M.~Menickelly, and S.~M. Wild.
\newblock Derivative-free optimization methods.
\newblock {\em arXiv preprint arXiv:1904.11585}, 2019.

\bibitem{letchford2009exploiting}
A.~N. Letchford and A.~Oukil.
\newblock Exploiting sparsity in pricing routines for the capacitated arc
  routing problem.
\newblock {\em Computers \& Operations Research}, 36(7):2320--2327, 2009.

\bibitem{li2018minimax}
Y.~Li and G.~Raskutti.
\newblock Minimax optimal convex methods for poisson inverse problems under
  $\ell_q$-ball sparsity.
\newblock {\em IEEE Transactions on Information Theory}, 64(8):5498--5512,
  2018.

\bibitem{liu2019sample}
H.~Liu, X.~Wang, T.~Yao, R.~Li, and Y.~Ye.
\newblock Sample average approximation with sparsity-inducing penalty for
  high-dimensional stochastic programming.
\newblock {\em Mathematical programming}, 178(1):69--108, 2019.

\bibitem{marsden2008computational}
A.~L. Marsden, J.~A. Feinstein, and C.~A. Taylor.
\newblock A computational framework for derivative-free optimization of
  cardiovascular geometries.
\newblock {\em Computer methods in applied mechanics and engineering},
  197(21-24):1890--1905, 2008.

\bibitem{mockus2012bayesian}
J.~Mockus.
\newblock {\em Bayesian approach to global optimization: theory and
  applications}, volume~37.
\newblock Springer Science \& Business Media, 2012.

\bibitem{negahban2012unified}
S.~N. Negahban, P.~Ravikumar, M.~J. Wainwright, and B.~Yu.
\newblock A unified framework for high-dimensional analysis of $ m $-estimators
  with decomposable regularizers.
\newblock {\em Statistical science}, 27(4):538--557, 2012.

\bibitem{nelder1965simplex}
J.~A. Nelder and R.~Mead.
\newblock A simplex method for function minimization.
\newblock {\em The computer journal}, 7(4):308--313, 1965.

\bibitem{nemirovski2009robust}
A.~Nemirovski, A.~Juditsky, G.~Lan, and A.~Shapiro.
\newblock Robust stochastic approximation approach to stochastic programming.
\newblock {\em SIAM Journal on optimization}, 19(4):1574--1609, 2009.

\bibitem{nesterov2005smooth}
Y.~Nesterov.
\newblock Smooth minimization of non-smooth functions.
\newblock {\em Mathematical programming}, 103(1):127--152, 2005.

\bibitem{nesterov2017random}
Y.~Nesterov and V.~Spokoiny.
\newblock Random gradient-free minimization of convex functions.
\newblock {\em Foundations of Computational Mathematics}, 17(2):527--566, 2017.

\bibitem{rigollet201518}
P.~Rigollet.
\newblock 18. s997: High dimensional statistics. chapter 1: Sub-gaussian random
  variables.
\newblock {\em (Lecture Notes), Cambridge, MA, USA: MIT Open-CourseWare}, 2015.

\bibitem{rubinstein2016simulation}
R.~Y. Rubinstein and D.~P. Kroese.
\newblock {\em Simulation and the Monte Carlo method}, volume~10.
\newblock John Wiley \& Sons, 2016.

\bibitem{snoek2012practical}
J.~Snoek, H.~Larochelle, and R.~P. Adams.
\newblock Practical bayesian optimization of machine learning algorithms.
\newblock {\em Advances in neural information processing systems},
  25:2951--2959, 2012.

\bibitem{solis1981minimization}
F.~J. Solis and R.~J.-B. Wets.
\newblock Minimization by random search techniques.
\newblock {\em Mathematics of operations research}, 6(1):19--30, 1981.

\bibitem{spall1998overview}
J.~C. Spall.
\newblock An overview of the simultaneous perturbation method for efficient
  optimization.
\newblock {\em Johns Hopkins apl technical digest}, 19(4):482--492, 1998.

\bibitem{torczon1997convergence}
V.~Torczon.
\newblock On the convergence of pattern search algorithms.
\newblock {\em SIAM Journal on optimization}, 7(1):1--25, 1997.

\bibitem{vershynin2018high}
R.~Vershynin.
\newblock {\em High-dimensional probability: An introduction with applications
  in data science}, volume~47.
\newblock Cambridge university press, 2018.

\bibitem{wang2018stochastic}
Y.~Wang, S.~Du, S.~Balakrishnan, and A.~Singh.
\newblock Stochastic zeroth-order optimization in high dimensions.
\newblock In {\em International Conference on Artificial Intelligence and
  Statistics}, pages 1356--1365. PMLR, 2018.

\end{thebibliography}

\newpage

\begin{appendix}
\section*{\centering Supplemental Material}

 \begin{table}[H] 
\caption{Comparison of suboptimality gap when the mini-batch size $M$ increases.  Here, the budget of the total number of zeroth-order oracle calls is 160,000, the dimension $d=2^{15}=32,768$,  the iteration count $K=\left\lfloor\frac{320,000}{M} \right\rfloor$, and $\delta = 1\times 10^{-7}$. For each value of $M$, numbers in the first row  are the average suboptimality gaps out of ten random replications while those in  second (behind the ``{\scriptsize$\pm$}''-signs) are the standard deviations. ``{\it e}\,$	\bullet$'' means ``$\times 10^\bullet$''.}\label{tab: increase M}
\begin{center}
\begin{tabular}{|c|rrr|rrr|r|}
\toprule
{$M$} 
                  &si-sgf$^{R}$          & si-sgf$^{*}$       & si-sgf$^{A}$             &si-sgfs$^{R}$          & si-sgfs$^{*}$          & si-sgfs$^{A}$        & sgf \\\midrule
\multirow{2}{*}{40} & 1.3e-1 & 1.1e-1 & 1.7e-1 & 5.5e-1 & 3.0e-1 & 3.8e-1 & 1.5 \\ 
 & {\scriptsize$\pm$}3.0e-2 & {\scriptsize$\pm$}4.7e-3 & {\scriptsize$\pm$}1.7e-3 & {\scriptsize$\pm$}5.3e-1 & {\scriptsize$\pm$}3.0e-2 & {\scriptsize$\pm$}5.6e-3 & {\scriptsize$\pm$}7.6e-3 \\\hline
\multirow{2}{*}{100} & 4.5e-1 & 3.3e-2 & 1.2e-1 & 9.8e-2 & 8.8e-2 & 9.6e-2 & 1.4 \\ 
 & {\scriptsize$\pm$}1.2 & {\scriptsize$\pm$}3.1e-3 & {\scriptsize$\pm$}2.0e-3 & {\scriptsize$\pm$}7.6e-3 & {\scriptsize$\pm$}6.4e-3 & {\scriptsize$\pm$}1.6e-3 & {\scriptsize$\pm$}4.8e-3 \\\hline
\multirow{2}{*}{160} & 1.0e-1 & 3.1e-2 & 1.5e-1 & 9.0e-2 & 5.1e-2 & 5.6e-2 & 1.4 \\ 
 & {\scriptsize$\pm$}1.5e-1 & {\scriptsize$\pm$}3.3e-3 & {\scriptsize$\pm$}2.1e-3 & {\scriptsize$\pm$}1.1e-1 & {\scriptsize$\pm$}4.9e-3 & {\scriptsize$\pm$}1.1e-3 & {\scriptsize$\pm$}6.6e-3 \\\hline
\multirow{2}{*}{220} & 4.0e-1 & 3.3e-2 & 2.0e-1 & 6.7e-2 & 4.2e-2 & 4.2e-2 & 1.4 \\ 
 & {\scriptsize$\pm$}3.9e-1 & {\scriptsize$\pm$}1.9e-3 & {\scriptsize$\pm$}3.0e-3 & {\scriptsize$\pm$}6.7e-2 & {\scriptsize$\pm$}2.6e-3 & {\scriptsize$\pm$}1.4e-3 & {\scriptsize$\pm$}7.9e-3 \\\hline
\multirow{2}{*}{280} & 7.6e-1 & 3.8e-2 & 2.4e-1 & 4.4e-2 & 3.5e-2 & 3.6e-2 & 1.5 \\ 
 & {\scriptsize$\pm$}1.6 & {\scriptsize$\pm$}1.3e-3 & {\scriptsize$\pm$}4.0e-3 & {\scriptsize$\pm$}1.1e-2 & {\scriptsize$\pm$}1.5e-3 & {\scriptsize$\pm$}4.7e-4 & {\scriptsize$\pm$}6.4e-3 \\\hline
\multirow{2}{*}{340} & 1.5 & 4.4e-2 & 2.9e-1 & 4.8e-2 & 3.7e-2 & 3.6e-2 & 1.5 \\ 
 & {\scriptsize$\pm$}2.3 & {\scriptsize$\pm$}1.5e-3 & {\scriptsize$\pm$}2.5e-3 & {\scriptsize$\pm$}1.1e-2 & {\scriptsize$\pm$}1.5e-3 & {\scriptsize$\pm$}8.5e-4 & {\scriptsize$\pm$}8.7e-3 \\\hline
\multirow{2}{*}{400} & 6.2e-1 & 5.3e-2 & 3.5e-1 & 5.8e-2 & 4.5e-2 & 4.0e-2 & 1.5 \\ 
 & {\scriptsize$\pm$}1.4 & {\scriptsize$\pm$}1.8e-3 & {\scriptsize$\pm$}2.3e-3 & {\scriptsize$\pm$}2.3e-2 & {\scriptsize$\pm$}1.4e-3 & {\scriptsize$\pm$}1.1e-3 & {\scriptsize$\pm$}6.9e-3 \\\hline
\multirow{2}{*}{460} & 7.1e-1 & 6.6e-2 & 4.1e-1 & 3.9e-1 & 5.4e-2 & 4.5e-2 & 1.5 \\ 
 & {\scriptsize$\pm$}9.7e-1 & {\scriptsize$\pm$}1.3e-3 & {\scriptsize$\pm$}4.1e-3 & {\scriptsize$\pm$}1.0 & {\scriptsize$\pm$}2.1e-3 & {\scriptsize$\pm$}1.3e-3 & {\scriptsize$\pm$}7.4e-3 \\\hline
\multirow{2}{*}{520} & 6.6e-1 & 6.8e-2 & 4.9e-1 & 1.3e-1 & 2.5e-2 & 5.0e-2 & 1.5 \\ 
 & {\scriptsize$\pm$}1.2 & {\scriptsize$\pm$}2.0e-3 & {\scriptsize$\pm$}3.6e-3 & {\scriptsize$\pm$}2.0e-1 & {\scriptsize$\pm$}1.3e-3 & {\scriptsize$\pm$}1.3e-3 & {\scriptsize$\pm$}9.1e-3 \\\hline
\multirow{2}{*}{580} & 2.9e-1 & 7.5e-2 & 5.6e-1 & 3.9e-1 & 2.1e-2 & 5.7e-2 & 1.5 \\ 
 & {\scriptsize$\pm$}3.6e-1 & {\scriptsize$\pm$}2.2e-3 & {\scriptsize$\pm$}3.4e-3 & {\scriptsize$\pm$}1.1 & {\scriptsize$\pm$}2.6e-3 & {\scriptsize$\pm$}2.6e-3 & {\scriptsize$\pm$}5.7e-3 \\\hline
\multirow{2}{*}{640} & 7.1e-1 & 8.9e-2 & 6.4e-1 & 5.7e-2 & 2.5e-2 & 6.6e-2 & 1.5 \\ 
 & {\scriptsize$\pm$}1.0 & {\scriptsize$\pm$}1.7e-3 & {\scriptsize$\pm$}5.3e-3 & {\scriptsize$\pm$}5.8e-2 & {\scriptsize$\pm$}1.9e-3 & {\scriptsize$\pm$}2.7e-3 & {\scriptsize$\pm$}7.1e-3 \\\hline
\multirow{2}{*}{700} & 1.0 & 1.1e-1 & 7.3e-1 & 4.7e-1 & 2.8e-2 & 7.4e-2 & 1.5 \\ 
 & {\scriptsize$\pm$}8.6e-1 & {\scriptsize$\pm$}3.4e-3 & {\scriptsize$\pm$}6.1e-3 & {\scriptsize$\pm$}1.3 & {\scriptsize$\pm$}8.3e-4 & {\scriptsize$\pm$}1.1e-3 & {\scriptsize$\pm$}4.4e-3 \\\hline
\multirow{2}{*}{760} & 1.2 & 1.2e-1 & 8.1e-1 & 6.0e-1 & 3.0e-2 & 8.2e-2 & 1.5 \\ 
 & {\scriptsize$\pm$}1.2 & {\scriptsize$\pm$}1.2e-3 & {\scriptsize$\pm$}4.3e-3 & {\scriptsize$\pm$}1.1 & {\scriptsize$\pm$}1.3e-3 & {\scriptsize$\pm$}1.4e-3 & {\scriptsize$\pm$}8.1e-3 \\\hline
\multirow{2}{*}{820} & 9.3e-1 & 1.4e-1 & 8.9e-1 & 1.6e-1 & 3.2e-2 & 8.8e-2 & 1.5 \\ 
 & {\scriptsize$\pm$}1.6 & {\scriptsize$\pm$}1.5e-3 & {\scriptsize$\pm$}5.6e-3 & {\scriptsize$\pm$}1.9e-1 & {\scriptsize$\pm$}1.3e-3 & {\scriptsize$\pm$}1.5e-3 & {\scriptsize$\pm$}6.6e-3 \\\hline
\multirow{2}{*}{880} & 1.4 & 1.6e-1 & 9.9e-1 & 2.2e-1 & 3.4e-2 & 9.7e-2 & 1.5 \\ 
 & {\scriptsize$\pm$}1.8 & {\scriptsize$\pm$}4.8e-3 & {\scriptsize$\pm$}1.1e-2 & {\scriptsize$\pm$}4.8e-1 & {\scriptsize$\pm$}1.5e-3 & {\scriptsize$\pm$}3.4e-3 & {\scriptsize$\pm$}7.2e-3   
\\\bottomrule
\end{tabular}
\end{center}
\end{table}

\begin{table} 
\caption{Comparison of suboptimality gap when $d$ increases exponentially.  $\delta=10^{-7}$. The budget of the total number of zeroth-order oracle calls is  160,000. For all variants of {\it si-sgf} and {\it si-sgfs}, the mini-batch sizes $M$ are chosen as 160 and 280, respectively. Correspondingly,  iteration count $K=\left\lfloor\frac{320,000}{M} \right\rfloor$.  For each value of $d$, numbers in the first row  are the average suboptimality gaps out of five random replications while those in  second (behind the ``{\scriptsize$\pm$}''-signs) are the standard deviations.  ``{\it e}\,$	\bullet$'' means ``$\times 10^\bullet$''. The numbers in bold refer to the smallest average suboptimality gaps for the same $d$.}\label{tab: increase d}
\begin{center}
\begin{tabular}{|c|rrr|rrr|r|}
\toprule
{$d$} 
                                   &si-sgf$^{R}$          & si-sgf$^{*}$       & si-sgf$^{A}$             &si-sgfs$^{R}$          & si-sgfs$^{*}$          & si-sgfs$^{A}$        & sgf \\\midrule
\multirow{2}{*}{2$^{6}$} & 4.2e-2 & 3.2e-2 & 8.1e-2 & 1.5e-2 & 1.5e-2 & 2.5e-2 & {\bf 2.2e-3} \\ 
 & {\scriptsize$\pm$}1.4e-2 & {\scriptsize$\pm$}7.0e-3 & {\scriptsize$\pm$}2.2e-3 & {\scriptsize$\pm$}3.1e-3 & {\scriptsize$\pm$}3.6e-3 & {\scriptsize$\pm$}1.3e-3 & {\scriptsize$\pm$}7.4e-4 \\\hline
\multirow{2}{*}{2$^{7}$} & 3.5e-2 & 3.0e-2 & 8.1e-2 & 3.2e-2 & 1.9e-2 & 2.6e-2 & {\bf 3.9e-3} \\ 
 & {\scriptsize$\pm$}7.4e-3 & {\scriptsize$\pm$}4.7e-3 & {\scriptsize$\pm$}8.0e-4 & {\scriptsize$\pm$}1.5e-2 & {\scriptsize$\pm$}3.2e-3 & {\scriptsize$\pm$}1.7e-3 & {\scriptsize$\pm$}6.1e-4 \\\hline
\multirow{2}{*}{2$^{8}$} & 3.9e-2 & 3.7e-2 & 8.2e-2 & 4.0e-2 & 2.7e-2 & 2.7e-2 & {\bf 1.0e-2} \\ 
 & {\scriptsize$\pm$}1.1e-2 & {\scriptsize$\pm$}4.9e-3 & {\scriptsize$\pm$}1.4e-3 & {\scriptsize$\pm$}1.3e-2 & {\scriptsize$\pm$}3.1e-3 & {\scriptsize$\pm$}1.9e-3 & {\scriptsize$\pm$}8.1e-4 \\\hline
\multirow{2}{*}{2$^{9}$} & 7.6e-2 & 3.7e-2 & 8.4e-2 & 4.9e-2 & 3.7e-2 & 2.6e-2 & {\bf 2.2e-2} \\ 
 & {\scriptsize$\pm$}3.8e-2 & {\scriptsize$\pm$}2.1e-3 & {\scriptsize$\pm$}1.8e-3 & {\scriptsize$\pm$}1.0e-2 & {\scriptsize$\pm$}2.2e-3 & {\scriptsize$\pm$}1.2e-3 & {\scriptsize$\pm$}6.5e-4 \\\hline
\multirow{2}{*}{2$^{10}$} & 5.0e-1 & 4.1e-2 & 8.5e-2 & 5.6e-2 & 4.5e-2 & {\bf 2.5e-2} & 4.4e-2 \\ 
 & {\scriptsize$\pm$}9.9e-1 & {\scriptsize$\pm$}2.3e-3 & {\scriptsize$\pm$}1.2e-3 & {\scriptsize$\pm$}6.3e-3 & {\scriptsize$\pm$}1.7e-3 & {\scriptsize$\pm$}1.5e-3 & {\scriptsize$\pm$}4.9e-4 \\\hline
\multirow{2}{*}{2$^{11}$} & 6.9e-2 & 3.7e-2 & 8.8e-2 & 2.1e-1 & 4.4e-2 & {\bf 2.4e-2} & 7.2e-2 \\ 
 & {\scriptsize$\pm$}4.1e-2 & {\scriptsize$\pm$}2.8e-3 & {\scriptsize$\pm$}2.1e-3 & {\scriptsize$\pm$}3.4e-1 & {\scriptsize$\pm$}2.2e-3 & {\scriptsize$\pm$}6.7e-4 & {\scriptsize$\pm$}1.0e-3 \\\hline
\multirow{2}{*}{2$^{12}$} & 2.1e-1 & 3.4e-2 & 9.8e-2 & 1.1e-1 & 4.1e-2 & {\bf 2.5e-2} & 8.7e-2 \\ 
 & {\scriptsize$\pm$}3.0e-1 & {\scriptsize$\pm$}2.9e-3 & {\scriptsize$\pm$}1.5e-3 & {\scriptsize$\pm$}1.4e-1 & {\scriptsize$\pm$}2.1e-3 & {\scriptsize$\pm$}7.6e-4 & {\scriptsize$\pm$}1.9e-3 \\\hline
\multirow{2}{*}{2$^{13}$} & 1.8e-1 & 3.3e-2 & 1.1e-1 & 4.2e-2 & 3.6e-2 & {\bf 2.5e-2} & 1.4e-1 \\ 
 & {\scriptsize$\pm$}2.8e-1 & {\scriptsize$\pm$}4.7e-3 & {\scriptsize$\pm$}1.1e-3 & {\scriptsize$\pm$}7.4e-3 & {\scriptsize$\pm$}1.4e-3 & {\scriptsize$\pm$}1.3e-3 & {\scriptsize$\pm$}1.2e-3 \\\hline
\multirow{2}{*}{2$^{14}$} & 2.6e-1 & {\bf 2.7e-2} & 1.3e-1 & 5.5e-1 & 3.4e-2 & 3.0e-2 & 4.5e-1 \\ 
 & {\scriptsize$\pm$}3.6e-1 & {\scriptsize$\pm$}2.2e-3 & {\scriptsize$\pm$}1.0e-3 & {\scriptsize$\pm$}1.1 & {\scriptsize$\pm$}1.1e-3 & {\scriptsize$\pm$}6.1e-4 & {\scriptsize$\pm$}5.0e-3 \\\hline
\multirow{2}{*}{2$^{15}$} & 5.3e-2 & {\bf 3.0e-2} & 1.6e-1 & 3.9e-2 & 3.4e-2 & 3.5e-2 & 1.5 \\ 
 & {\scriptsize$\pm$}2.4e-2 & {\scriptsize$\pm$}2.8e-3 & {\scriptsize$\pm$}3.5e-3 & {\scriptsize$\pm$}1.0e-2 & {\scriptsize$\pm$}1.3e-3 & {\scriptsize$\pm$}8.2e-4 & {\scriptsize$\pm$}4.8e-3 \\\hline
\multirow{2}{*}{2$^{16}$} & 8.0e-2 & {\bf 3.5e-2} & 1.9e-1 & 1.1 & 3.6e-2 & 4.3e-2 & 3.0 \\ 
 & {\scriptsize$\pm$}7.0e-2 & {\scriptsize$\pm$}4.1e-3 & {\scriptsize$\pm$}1.8e-3 & {\scriptsize$\pm$}2.3 & {\scriptsize$\pm$}1.9e-3 & {\scriptsize$\pm$}8.6e-4 & {\scriptsize$\pm$}3.4e-3 \\\hline
\multirow{2}{*}{2$^{17}$} & 7.0e-1 & {\bf 3.8e-2} & 2.2e-1 & 5.9e-2 & 4.1e-2 & 5.1e-2 & 4.4 \\ 
 & {\scriptsize$\pm$}1.1 & {\scriptsize$\pm$}3.7e-3 & {\scriptsize$\pm$}1.0e-3 & {\scriptsize$\pm$}2.5e-2 & {\scriptsize$\pm$}1.6e-3 & {\scriptsize$\pm$}1.4e-3 & {\scriptsize$\pm$}3.4e-3 \\\hline
\multirow{2}{*}{2$^{18}$} & 3.7e-1 & {\bf 4.3e-2} & 2.6e-1 & 7.3e-2 & {\bf 4.3e-2} & 6.1e-2 & 5.5 \\ 
 & {\scriptsize$\pm$}4.6e-1 & {\scriptsize$\pm$}4.2e-3 & {\scriptsize$\pm$}2.5e-3 & {\scriptsize$\pm$}3.6e-2 & {\scriptsize$\pm$}3.4e-3 & {\scriptsize$\pm$}9.9e-4 & {\scriptsize$\pm$}2.8e-3 \\\hline
\multirow{2}{*}{2$^{19}$} & 5.9e-1 & 5.6e-2 & 3.0e-1 & 1.3 & {\bf 4.7e-2} & 7.2e-2 & 6.1 \\ 
 & {\scriptsize$\pm$}9.7e-1 & {\scriptsize$\pm$}2.6e-3 & {\scriptsize$\pm$}3.6e-3 & {\scriptsize$\pm$}2.8 & {\scriptsize$\pm$}4.3e-3 & {\scriptsize$\pm$}1.4e-3 & {\scriptsize$\pm$}9.2e-4 \\\hline
\multirow{2}{*}{2$^{20}$} & 1.6e-1 & 7.0e-2 & 3.5e-1 & 3.5e-1 & {\bf 5.2e-2} & 8.6e-2 & 6.4 \\ 
 & {\scriptsize$\pm$}7.4e-2 & {\scriptsize$\pm$}1.2e-3 & {\scriptsize$\pm$}3.3e-3 & {\scriptsize$\pm$}4.1e-1 & {\scriptsize$\pm$}1.7e-3 & {\scriptsize$\pm$}1.7e-3 & {\scriptsize$\pm$}8.3e-4 \\\hline
\multirow{2}{*}{2$^{21}$} & 4.5e-1 & 9.0e-2 & 4.0e-1 & 2.5e-1 & {\bf 5.8e-2} & 9.8e-2 & 6.6 \\ 
 & {\scriptsize$\pm$}7.5e-1 & {\scriptsize$\pm$}3.3e-3 & {\scriptsize$\pm$}5.2e-3 & {\scriptsize$\pm$}4.0e-1 & {\scriptsize$\pm$}2.6e-3 & {\scriptsize$\pm$}2.1e-3 & {\scriptsize$\pm$}3.3e-4  
  \\\bottomrule
\end{tabular}
\end{center}
\end{table}

  \begin{table} 
\caption{Comparison of suboptimality gap when $\delta$ increases exponentially.   Here, the budget of the total number of zeroth-order oracle calls is 160,000, and the dimension $d=2^{15}=32,768$. For all variants of {\it si-sgf} and {\it si-sgfs}, the mini-batch sizes $M$ are chosen as 160 and 280, respectively. Correspondingly,  iteration count $K=\left\lfloor\frac{320,000}{M} \right\rfloor$.  For each value of $\delta$, numbers in the first row  are the average suboptimality gaps out of ten random replications while those in  second (behind the ``{\scriptsize$\pm$}''-signs) are the standard deviations.  ``{\it e}\,$	\bullet$'' means ``$\times 10^\bullet$''.}\label{tab: increase delta}
\begin{center}
\begin{tabular}{|c|rrr|rrr|r|}
\toprule
{$\delta$} 
                                   &si-sgf$^{R}$          & si-sgf$^{*}$       & si-sgf$^{A}$             &si-sgfs$^{R}$          & si-sgfs$^{*}$          & si-sgfs$^{A}$        & sgf \\\midrule
 
\multirow{2}{*}{\small10$^{-8}$} & 3.1e-1 & 2.9e-2 & 1.6e-1 & 3.8e-2 & 3.6e-2 & 3.6e-2 & 1.6 \\ 
 & {\scriptsize$\pm$}5.8e-1 & {\scriptsize$\pm$}1.9e-3 & {\scriptsize$\pm$}2.3e-3 & {\scriptsize$\pm$}4.1e-3 & {\scriptsize$\pm$}1.5e-3 & {\scriptsize$\pm$}7.2e-4 & {\scriptsize$\pm$}1.9e-1 \\\hline
\multirow{2}{*}{\small10$^{-7}$} & 3.9e-1 & 2.8e-2 & 1.6e-1 & 4.5e-2 & 3.6e-2 & 3.6e-2 & 1.6 \\ 
 & {\scriptsize$\pm$}7.4e-1 & {\scriptsize$\pm$}2.5e-3 & {\scriptsize$\pm$}2.0e-3 & {\scriptsize$\pm$}1.3e-2 & {\scriptsize$\pm$}2.4e-3 & {\scriptsize$\pm$}9.7e-4 & {\scriptsize$\pm$}8.2e-2 \\\hline
\multirow{2}{*}{\small10$^{-6}$} & 9.9e-2 & 3.3e-2 & 1.6e-1 & 3.9e-2 & 3.5e-2 & 3.5e-2 & 1.6 \\ 
 & {\scriptsize$\pm$}9.5e-2 & {\scriptsize$\pm$}4.7e-3 & {\scriptsize$\pm$}1.1e-3 & {\scriptsize$\pm$}4.6e-3 & {\scriptsize$\pm$}3.2e-3 & {\scriptsize$\pm$}9.1e-4 & {\scriptsize$\pm$}1.6e-1 \\\hline
\multirow{2}{*}{\small10$^{-5}$} & 1.1e-1 & 3.0e-2 & 1.6e-1 & 8.6e-2 & 3.6e-2 & 3.7e-2 & 1.5 \\ 
 & {\scriptsize$\pm$}1.1e-1 & {\scriptsize$\pm$}4.0e-3 & {\scriptsize$\pm$}2.4e-3 & {\scriptsize$\pm$}7.3e-2 & {\scriptsize$\pm$}5.8e-4 & {\scriptsize$\pm$}1.1e-3 & {\scriptsize$\pm$}4.9e-2 \\\hline
\multirow{2}{*}{\small10$^{-4}$} & 6.2e-1 & 1.1e-1 & 3.2e-1 & 1.4e-1 & 1.1e-1 & 1.5e-1 & 1.6 \\ 
 & {\scriptsize$\pm$}6.3e-1 & {\scriptsize$\pm$}8.1e-3 & {\scriptsize$\pm$}6.5e-3 & {\scriptsize$\pm$}1.7e-2 & {\scriptsize$\pm$}4.6e-3 & {\scriptsize$\pm$}2.3e-3 & {\scriptsize$\pm$}6.4e-2 \\\hline
\multirow{2}{*}{\small10$^{-3}$} & 6.8 & 6.6 & 6.7 & 6.9 & 6.4 & 6.7 & 6.7 \\ 
 & {\scriptsize$\pm$}2.8e-2 & {\scriptsize$\pm$}5.2e-2 & {\scriptsize$\pm$}1.6e-3 & {\scriptsize$\pm$}1.5e-1 & {\scriptsize$\pm$}1.5e-1 & {\scriptsize$\pm$}6.1e-3 & {\scriptsize$\pm$}2.6e-4 
  \\\bottomrule
\end{tabular}
\end{center}
\end{table}
\end{appendix}
\end{document}